\newtheorem{theorem}{Theorem}[section]
\newtheorem{corollary}{Corollary}
\newtheorem{lemma}[theorem]{Lemma}
\newtheorem{proposition}{Proposition}
\newtheorem{example}{Example}
\theoremstyle{definition}
\newtheorem{definition}[theorem]{Definition}
\newtheorem{remark}{Remark}
\title[Intermediate $\beta$-shifts of finite type]{Intermediate $\beta$-shifts of finite type}
\thanks{The first author was supported by NSFC (no.\ 11201155 and 11371148) and Guangdong Natural Science Foundation 2014A030313230. The second author was supported by the ERC grant no.\ 306494. The third author thanks the Universit\"at Bremen and the SCUT for their support.}
\author{Bing Li}
\address{Department of Mathematics, South China University of Technology, Guangzhou, 510641, P.R. China}
\email{scbingli@scut.edu.cn}
\author{Tuomas Sahlsten}
\address{Einstein Institute of Mathematics, The Hebrew University of Jerusalem, Givat Ram, Jerusalem 91904, Israel}
\email{tsahlsten@math.huji.ac.il}
\author{Tony Samuel}
\address{Fachbereich Mathematik, Universit\"at Bremen, 28359 Bremen, Germany}
\email{tony@math.uni-bremen.de}
\begin{document}

\maketitle

\begin{abstract}
An aim of this article is to highlight dynamical differences between the greedy, and hence the lazy, $\beta$-shift (transformation) and an intermediate $\beta$-shift (transformation), for a fixed $\beta \in (1, 2)$.  Specifically, a classification in terms of the kneading invariants of the linear maps $T_{\beta,\alpha} \colon x \mapsto \beta x + \alpha \bmod 1$ for which the corresponding intermediate $\beta$-shift is of finite type is given.  This characterisation is then employed  to construct a class of pairs $(\beta,\alpha)$ such that the intermediate $\beta$-shift associated with $T_{\beta, \alpha}$ is a subshift of finite type.  It is also proved that these maps $T_{\beta,\alpha}$ are not transitive.  This is in contrast to the situation for the corresponding greedy and lazy  $\beta$-shifts and $\beta$-transformations, for which both of the two properties do not hold.
\end{abstract}

\section{Introduction, motivation and statement of main results}

\subsection{Introduction and motivation}

\noindent For a given real number $\beta \in (1, 2)$ and a real number $x \in [0,1/(\beta-1)]$, an infinite word $(\omega_{n})_{n \in \mathbb{N}}$ over the alphabet $\{ 0, 1\}$ is called a $\beta$-\textit{expansion} of the point $x$ if
\begin{align*}
x = \sum_{k= 1}^{\infty} \omega_{k} \beta^{-k}.
\end{align*}
If $\beta$ is a natural number, then the $\beta$-expansions of a point $x$ correspond to the $\beta$-adic expansions of $x$.  In this case, almost all positive real numbers have a unique  $\beta$-expansion.  On the other hand, in \cite{S:2003b} it has been shown that if $\beta$ is not a natural number, then, for Lebesgue almost all $x$, the cardinality of the set of $\beta$-expansions of $x$ is equal to the cardinality of the continuum.

The theory of $\beta$-expansions originates with the works of R\'enyi \cite{R:1957} and Parry \cite{P:1960,P1964}, where an important link to symbolic dynamics has been established.  Indeed, through iterating the greedy $\beta$-transformation $G_{\beta} \colon x \mapsto \beta x \bmod 1$ and the lazy $\beta$-transformation $L_{\beta} \colon x \mapsto \beta (x - 1) +2 \bmod 1$ one obtains subsets of $\{0, 1\}^{\mathbb{N}}$ whose closures are known as the greedy and (normalised) lazy $\beta$-shifts, respectively.  Each point $\omega^{+}$ of the greedy $\beta$-shift is a $\beta$-expansion, and corresponds to a unique point in $[0, 1]$; and each point $\omega^{-}$ of the lazy $\beta$-shift is a $\beta$-expansion, and corresponds to a unique point in $[(2-\beta)/(\beta - 1), 1/(\beta - 1)]$.  (Note that in the case when $(2-\beta)/(\beta - 1) \leq 1$, if $\omega^{+}$ and $\omega^{-}$ are \mbox{$\beta$-expansions} of the same point, then $\omega^{+}$ and $\omega^{-}$ do not necessarily have to be equal, see \cite{KL:1998}.)   Through this connection we observe one of the most appealing features of the theory of $\beta$-expansions, namely that it links symbolic dynamics to number theory.  In particular, one can ask questions of the form, for what class of numbers the greedy and lazy $\beta$-shifts have given properties and vice versa.   In fact, although the arithmetical, Diophantine and ergodic properties of the greedy and lazy $\beta$-shifts have been extensively studied (see \cite{B:1989,DK:2002,S:2003} and references therein), there are many open problems of this form.  Further, applications of this theory to the efficiency of analog-to-digital conversion have been explored in \cite{DDGV06}.  Moreover, through understanding $\beta$-expansions of real numbers, advances have been made in understanding Bernoulli convolutions.  For literature in this direction, we refer the reader to \cite{DK:2002b,DV:2005,DV:2007} and reference therein.

There are many ways, other than using the greedy and lazy $\beta$-shifts, to generate a \mbox{$\beta$-expansion} of a positive real number. For instance the intermediate $\beta$-shifts $\Omega_{\beta, \alpha}$ which arise from the intermediate $\beta$-transformations $T_{\beta, \alpha}^{\pm} \colon [0, 1] \circlearrowleft$.  These transformations are defined for $(\beta,\alpha) \in \Delta$, with
\[
\Delta \coloneqq \{ (\beta, \alpha) \in \mathbb{R}^{2} \colon \beta \in (1, 2) \; \text{and} \; 0 \leq \alpha \leq 2 - \beta\},
\]
as follows (also see Figure~\ref{Fig1}).  Letting $p = p_{\beta, \alpha} \coloneqq (1-\alpha)/\beta$ we set
\[
T^{+}_{\beta, \alpha}(p) \coloneqq 0
\quad \text{and} \quad
T_{\beta,\alpha}^{+}(x) \coloneqq \beta x + \alpha \bmod 1,
\]
for all $x \in [0, 1] \setminus \{ p\}$.  Similarly, we define
\[
T^{-}_{\beta, \alpha}(p) \coloneqq 1
\quad \text{and} \quad
T_{\beta,\alpha}^{-}(x) \coloneqq \beta x + \alpha \bmod 1,
\]
for all $x \in [0, 1] \setminus \{ p\}$.� Indeed we have that the maps $T_{\beta, \alpha}^{\pm}$ are equal everywhere except at the point $p$ and that
\[
T^{-}_{\beta, \alpha} (x) = 1 - T_{\beta, 2 - \beta - \alpha}^+(1 - x),
\]
for all $x \in [0,1]$. Observe that, when $\alpha = 0$, the maps $G_{\beta}$ and $T^{+}_{\beta, \alpha}$ coincide, and when $\alpha = 2-\beta$, the maps $L_{\beta}$ and $T^{-}_{\beta, \alpha}$ coincide.� Here we make the observation that for all $(\beta, \alpha) \in \Delta$, the symbolic space $\Omega_{\beta, \alpha}$ associated to $T_{\beta, \alpha}^{\pm}$ is always a subshift, meaning that it is closed and invariant under the left shift map (see Corollary~\ref{cor:shift_invarance}).

The maps $T_{\beta, \alpha}^{\pm}$ are sometimes also called linear Lorenz maps and arise naturally from the Poincar\'e maps of the geometric model of Lorenz differential equations.  We refer the interested reader to \cite{EO:1994,L:1963,V:2003,W:1980} for further details.

\begin{figure}[htbp]
\centering
\subfloat[Plot of $L_{\beta}$.]{
\scalebox{0.325}{
\includegraphics{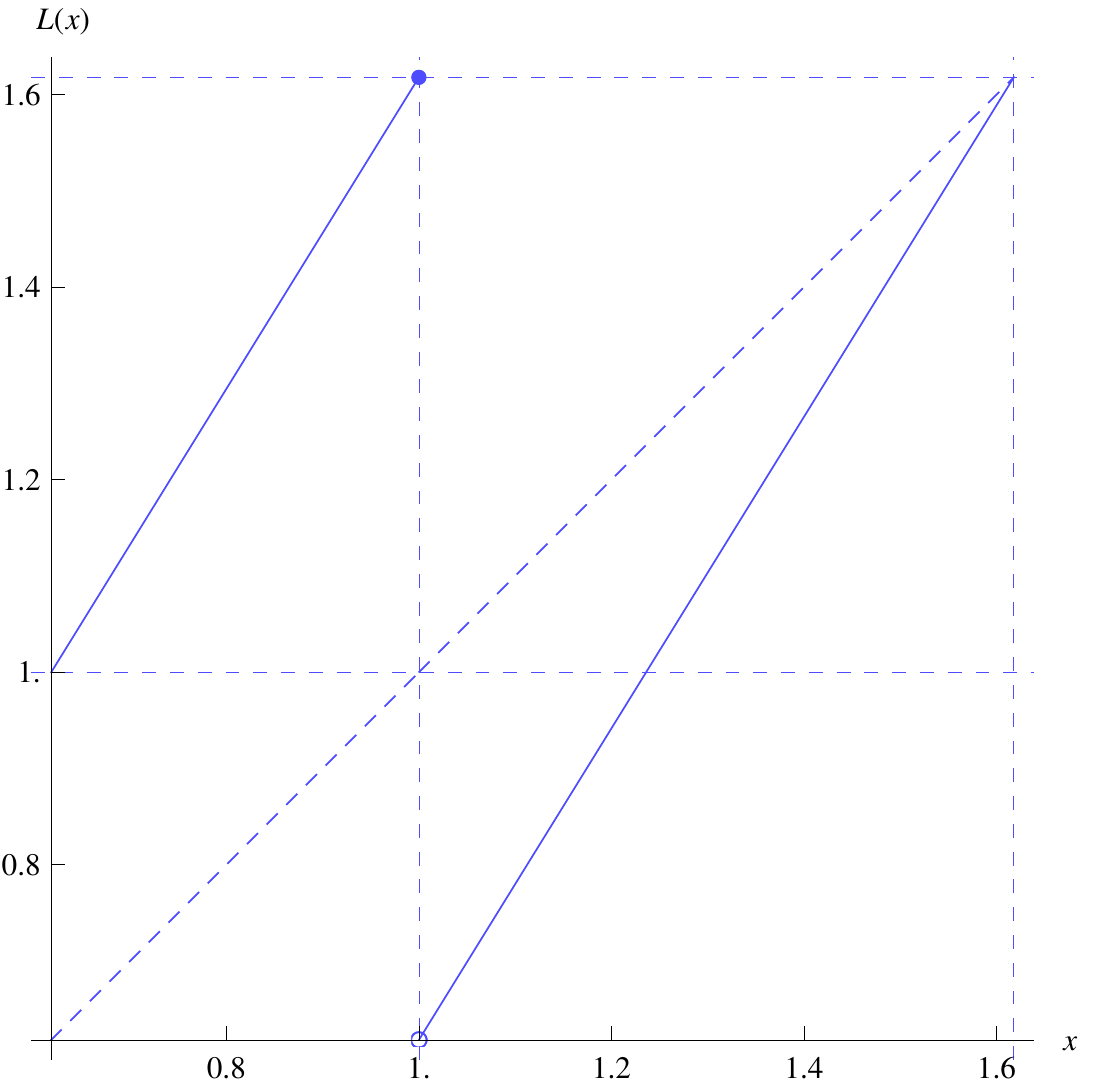}	
	}}
	\hspace{1em}
\subfloat[Plot of $T_{\beta,\alpha}^{+}$.]{
\scalebox{0.325}{
\includegraphics{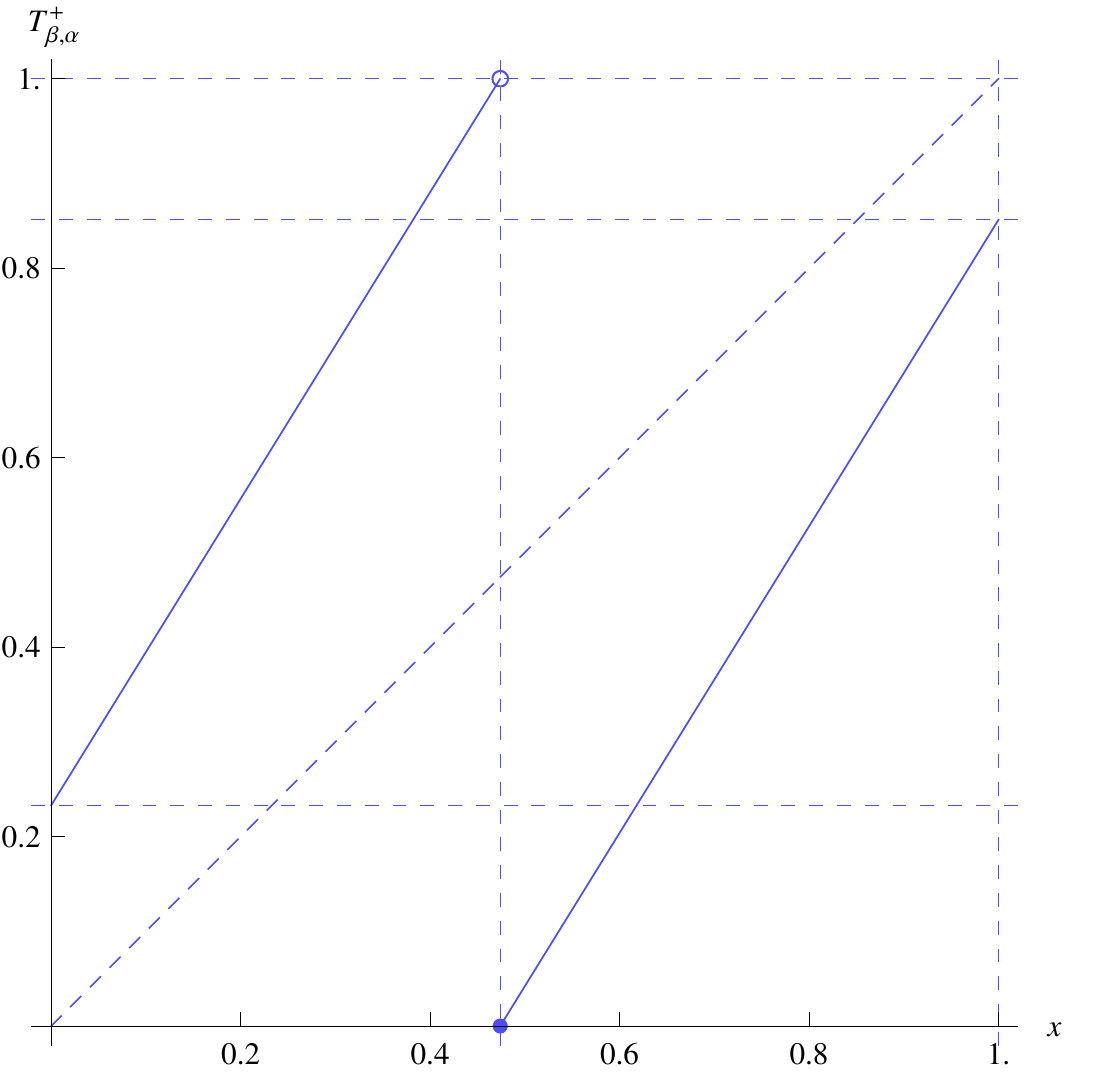}	
	}}
	\hspace{1em}
\subfloat[Plot of $G_{\beta}$.]{
\scalebox{0.325}{
\includegraphics{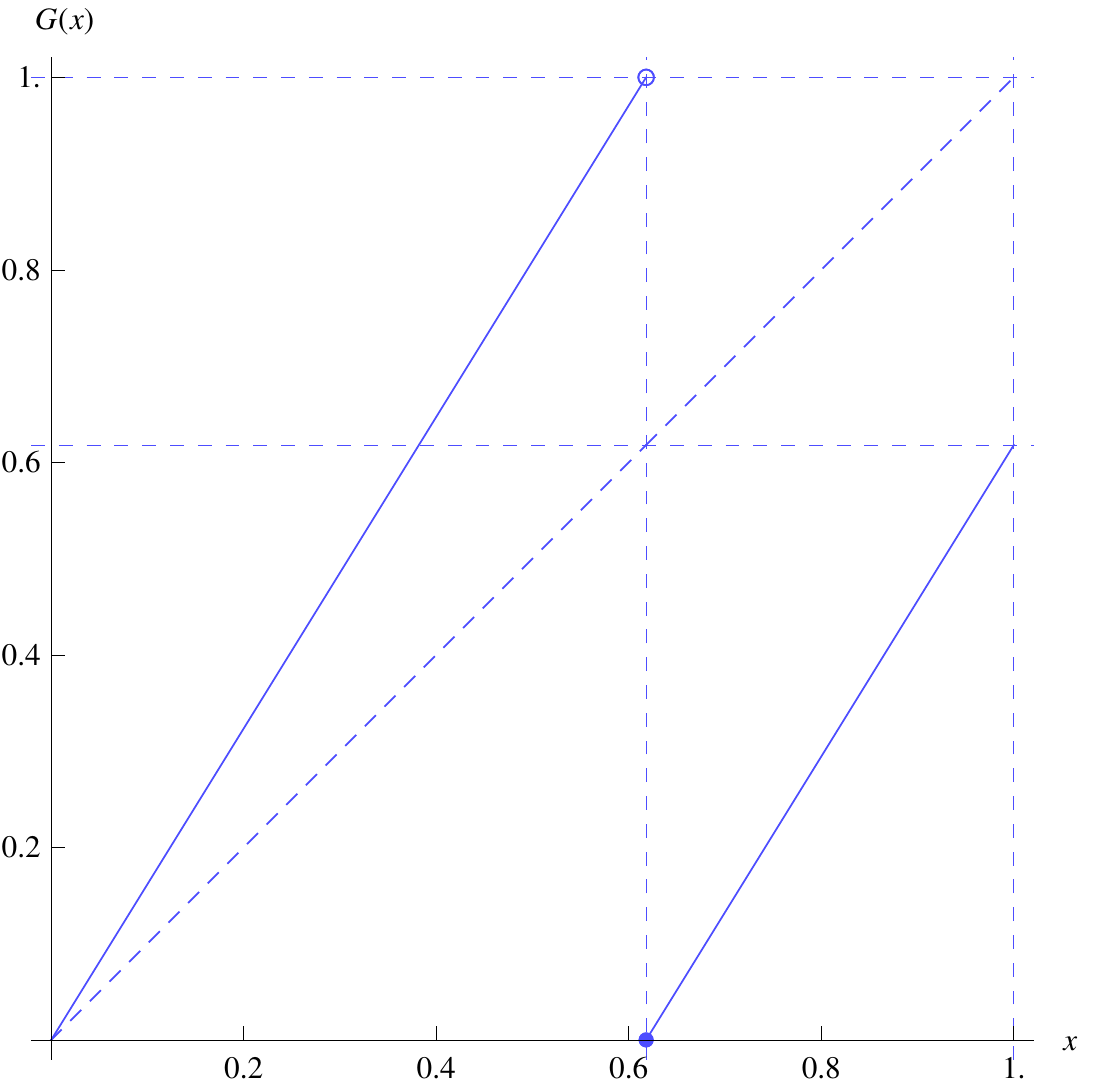}	
	}}
\caption{Plot of $T_{\beta,\alpha}^{+}$ for $\beta = (\sqrt{5} + 1)/2$ and $\alpha = 1 - 0.474 \beta$, and the corresponding lazy and greedy $\beta$-transformations. (The height of the filled in circle determines the value of the map at the point of discontinuity.)}
\label{Fig1}
\end{figure}

In this article the intermediate $\beta$-transformations and $\beta$-shifts are the main topic of study, and thus, to illustrate their importance we recall some of the known results in this area.  Parry \cite{P:1965} proved that any topological mixing interval map with a single discontinuity is topologically conjugate to a map on the form $T_{\beta, \alpha}^{\pm}$ where $(\beta, \alpha) \in \Delta$.  In \cite{G:1990,HS:1990} it is shown that a topologically expansive piecewise continuous map $T$ can be described up to topological conjugacy by the kneading invariants of the points of discontinuity of $T$.  (In the case that $T = T_{\beta, \alpha}^{\pm}$, the kneading invariants of the point of discontinuity $p$ are precisely given by the points in the associated intermediate $\beta$-shift which are $\beta$-expansion of $p + \alpha/(\beta - 1)$.)  For such maps, assuming that there exists a single discontinuity, the authors of \cite{BV:2012,HS:1990} gave a simple condition on pairs of infinite words in the alphabet $\{ 0, 1\}$ which is satisfied if and only if that pair of sequences are the kneading invariants of the point of discontinuity of $T$.

Our main results, Theorems~\ref{thm:SFTandTransitive} and \ref{thm:ESFTP} and Corollary~\ref{cor:fibersSFT}, contribute to the ongoing efforts in determining the dynamical properties of the intermediate $\beta$-shifts and examining whether these properties also hold for the counterpart greedy and lazy $\beta$-shifts.  We demonstrate that it is possible to construct pairs $(\beta,\alpha) \in \Delta$ such that the associated intermediate $\beta$-shift is a subshift of finite type  but for which the maps $T_{\beta, \alpha}^{\pm}$ are not transitive.  In contrast to this, the corresponding greedy and lazy $\beta$-shifts are not subshifts of finite type (neither sofic shifts, namely a factor of a subshift of finite type) and, moreover, the maps $G_{\beta}$ and $L_{\beta}$ are transitive.   Recall that an interval map $T \colon [0, 1] \circlearrowleft$ is called \textit{transitive} if and only if for all open subintervals $J$ of $[0, 1]$ there exists $m \in \mathbb{N}$ such that $\bigcup_{i = 1}^{m} T^{i}(J) = (0,1)$.

To prove the transitivity part of our result we apply a result of Palmer \cite{P:1979} and Glendinning \cite{G:1990}, where it is shown that for any $1<  \beta < 2$ the maps $G_{\beta}$ and $L_{\beta}$ are transitive.  In fact, they give a complete classification of the set of points $(\beta, \alpha) \in \Delta$ for which the maps $T_{\beta, \alpha}^{\pm}$ are not transitive.  (For completeness we restate their classification in Section~\ref{sec:TheoremB}.)

Before stating our main results let us emphasis the importance of subshifts of finite type.  These symbolic spaces give a simple representation of dynamical systems with finite Markov partitions.  There are many applications of subshifts of finite type, for instance in coding theory, transmission and storage of data or tilings.  We refer the reader to \cite{BS:2004,KS:2012,LM:1995,V:2003} and references therein for more on subshifts of finite type and their applications.

\subsection{Main results}

\noindent Throughout this article, following convention, we let the symbol $\mathbb{N}$ denote the set of natural numbers and $\mathbb{N}_{0}$ denote the set of non-negative integers.  Recall, from for instance \cite[Example 3.3.4]{DK:2002}, that the \textit{multinacci number} of order $n \in \mathbb{N} \setminus \{ 1 \}$ is the real number $\gamma_{n} \in (1, 2)$ which is the unique positive real solution of the equation $1 = x^{-1} + x^{-2} + \dots + x^{-n}$.  The smallest multinacci number is the multinacci number of order $2$ and is equal to the golden mean $(1+\sqrt{5})/2$.  We also observe that $\gamma_{n + 1} > \gamma_{n}$, for all $n \in \mathbb{N}$.  Further, for $n, k \in \mathbb{N}_{0}$ with $n \geq 2$, we define the algebraic integers $\beta_{n, k}$ and $\alpha_{n, k}$ by $(\beta_{n, k})^{2^{k}} = \gamma_{n}$ and $\alpha_{n, k} = 1 - \beta_{n, k}/2$.

\begin{theorem}\label{thm:SFTandTransitive}
For all $n,k \in \mathbb{N}$ with $n \geq 2$, we have the following.
\begin{enumerate}
\item\label{item:1:Density2} The intermediate $\beta_{n, k}$-transformations $T_{\beta_{n, k}, \alpha_{n, k}}^{\pm}$ are not transitive.
\item\label{item:2:Density2} The intermediate $\beta_{n, k}$-shift $\Omega_{\beta_{n, k}, \alpha_{n, k}}$ is a subshift of finite type.
\item\label{item:4:Density2} The greedy and lazy $\beta_{n, k}$-transformations are transitive.
\item\label{item:3:Density2} The greedy and lazy $\beta_{n, k}$-shifts, $\Omega_{\beta_{n, k},0}$ and $\Omega_{\beta_{n, k},2-\beta_{n, k}}$ respectively, are not sofic, and hence not a subshift of finite type.
\end{enumerate}
\end{theorem}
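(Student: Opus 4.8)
The starting point is the observation that the choice $\alpha_{n,k} = 1 - \beta_{n,k}/2$ places the point of discontinuity at $p = p_{\beta_{n,k},\alpha_{n,k}} = (1-\alpha_{n,k})/\beta_{n,k} = 1/2$. Since $2 - \beta_{n,k} - \alpha_{n,k} = \alpha_{n,k}$, the symmetry relation $T^{-}_{\beta,\alpha}(x) = 1 - T^{+}_{\beta, 2-\beta-\alpha}(1-x)$ recorded in the introduction specialises to $T^{-}_{\beta_{n,k},\alpha_{n,k}}(x) = 1 - T^{+}_{\beta_{n,k},\alpha_{n,k}}(1-x)$; that is, the map is conjugate to itself via $x \mapsto 1 - x$. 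Consequently the two kneading invariants of $p$ are obtained from one another by interchanging the symbols $0$ and $1$, and it suffices to analyse the forward orbit of $p$ under $T^{+}_{\beta_{n,k},\alpha_{n,k}}$ alone. The second structural ingredient is the relation $(\beta_{n,k})^{2^{k}} = \gamma_{n}$, which I would exploit through a renormalisation (first-return) argument reducing the parameter $\beta_{n,k}$ to $\beta_{n,k-1} = (\beta_{n,k})^{2}$ and, after $k$ steps, to the multinacci base case $\beta_{n,0} = \gamma_{n}$.

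For part~\ref{item:2:Density2} I would show that $p$ is a periodic point of $T^{+}_{\beta_{n,k},\alpha_{n,k}}$, so that both kneading invariants are periodic sequences; Theorem~\ref{thm:ESFTP} then yields that $\Omega_{\beta_{n,k},\alpha_{n,k}}$ is a subshift of finite type once the periodic blocks are checked against the admissibility condition appearing there. In the base case $k=0$ this is a direct computation: the $\gamma_{n}$-expansion of $1$ is the finite word $1^{n}0^{\infty}$, and the orbit $p \mapsto 0 \mapsto \cdots$ returns to $p$ after finitely many steps (for $n=2$ one checks $p \mapsto 0 \mapsto 1-\gamma_{2}/2 \mapsto p$). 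For $k \geq 1$ I would run the renormalisation: the first-return map of $T^{+}_{\beta_{n,k},\alpha_{n,k}}$ to a suitable subinterval containing $p$ should be affinely conjugate to $T^{+}_{\beta_{n,k-1},\alpha_{n,k-1}}$, so that periodicity of the orbit of $p$ is inherited from the previous stage, the kneading words being produced from those at stage $k-1$ by a fixed substitution. Induction on $k$ then gives that $p$ is periodic at every stage, and hence the finite-type conclusion.

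Parts~\ref{item:4:Density2} and~\ref{item:1:Density2} rest on the classification of transitive parameters due to Palmer \cite{P:1979} and Glendinning \cite{G:1990}, recalled in Section~\ref{sec:TheoremB}. Part~\ref{item:4:Density2} is immediate from that result, since $G_{\beta}$ and $L_{\beta}$ are transitive for every $\beta \in (1,2)$, in particular for $\beta = \beta_{n,k}$. For part~\ref{item:1:Density2} I would verify that the pair $(\beta_{n,k},\alpha_{n,k})$ lies in the non-transitive region of that classification. The renormalisation of the previous paragraph supplies the mechanism: a map admitting a proper return interval around $p$ whose iterated images fail to exhaust $(0,1)$ cannot be transitive, and the explicit periodic kneading data computed above places $(\beta_{n,k},\alpha_{n,k})$ squarely in the list of non-transitive pairs.

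Finally, part~\ref{item:3:Density2} follows from the classical fact that the greedy $\beta$-shift is sofic if and only if $\beta$ is a Parry number, together with the observation that $\beta_{n,k}$ is not a Parry number when $k \geq 1$. Indeed, a Parry number is necessarily a Perron number, whereas $\beta_{n,k}$ is a root of $Q(x^{2^{k}})$, where $Q(x) = x^{n} - x^{n-1} - \cdots - 1$ is the Pisot multinacci polynomial, and the $2^{k}$ preimages of the dominant root $\gamma_{n}$ under $x \mapsto x^{2^{k}}$ all have modulus $\beta_{n,k}$; since $\gamma_{n}$ is not a $2^{k}$-th power in $\mathbb{Q}(\gamma_{n})$, at least one of these (for instance $-\beta_{n,k}$) is a genuine Galois conjugate, so $\beta_{n,k}$ possesses a conjugate of modulus equal to itself and is not Perron, the lazy case being identical by symmetry. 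I expect the main obstacle to be twofold: pushing the renormalisation of the second paragraph far enough to pin down the exact periodic form of the kneading invariants demanded by Theorem~\ref{thm:ESFTP}, and the field-theoretic verification that $\gamma_{n}$ is not a $2^{k}$-th power in $\mathbb{Q}(\gamma_{n})$, which underlies the non-Perron, and hence non-sofic, conclusion.
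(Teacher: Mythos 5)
Your overall architecture matches the paper's — reduce part \ref{item:2:Density2} to periodicity of both kneading invariants via Theorem~\ref{thm:ESFTP} and the symmetry $\alpha_{n,k}=2-\beta_{n,k}-\alpha_{n,k}$, handle parts \ref{item:1:Density2} and \ref{item:4:Density2} through the Palmer--Glendinning classification (Theorem~\ref{thm:Palmer}), and handle part \ref{item:3:Density2} by showing $\beta_{n,k}$ is not Perron — but two of the four parts rest on assertions that carry essentially all of the technical weight and are not proved. For \ref{item:2:Density2}, everything hinges on the claim that the first-return map of $T^{+}_{\beta_{n,k},\alpha_{n,k}}$ to a suitable interval about $p=1/2$ is affinely conjugate to $T^{+}_{\beta_{n,k-1},\alpha_{n,k-1}}$. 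This period-doubling picture is plausible (it is consistent with the fact that the paper's kneading words are generated by the substitution $0\mapsto(0,1)$, $1\mapsto(1,0)$), but you exhibit no return interval, no verification that the return time is constant on it, and no computation identifying the renormalised parameters as exactly $(\beta_{n,k-1},\alpha_{n,k-1})$. The paper does the equivalent work explicitly: it writes down candidate periodic words $\xi^{\pm}_{n,k}$ and proves, via the lex-admissibility criterion of Barnsley, Steiner and Vince, that they are self-admissible, that $\beta_{n,k}$ is the maximal real root of the associated power series, and that $\pi_{\beta_{n,k},\alpha_{n,k}}(\xi^{\pm}_{n,k})=1/2$; only then does it know that $\tau^{\pm}_{\beta_{n,k},\alpha_{n,k}}(p)$ are these periodic words. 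Without one of these two arguments carried out in full, periodicity — and hence \ref{item:2:Density2} — is not established. (For \ref{item:1:Density2} the needed verification is much lighter than you suggest: the paper simply checks the two inequalities defining $D_{1,2}$, using only $1<\beta_{n,k}^{2}\leq\gamma_{n}<2$; no renormalisation is required, and your appeal to it as the ``mechanism'' is not, as written, a proof.)

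For \ref{item:3:Density2} your route has a genuine hole, which you flag yourself: you need $-\beta_{n,k}$ to be a Galois conjugate of $\beta_{n,k}$, and you reduce this to ``$\gamma_{n}$ is not a $2^{k}$-th power in $\mathbb{Q}(\gamma_{n})$'', which you do not prove. Moreover the reduction itself is off for $k\geq2$: $-\beta_{n,k}$ is conjugate to $\beta_{n,k}$ precisely when the top step of the tower $\mathbb{Q}(\gamma_{n})\subseteq\cdots\subseteq\mathbb{Q}(\beta_{n,k-1})\subseteq\mathbb{Q}(\beta_{n,k})$ is quadratic, i.e.\ when $\beta_{n,k-1}$ is not a square in $\mathbb{Q}(\beta_{n,k-1})$, which is a different (and stronger) statement than the one you cite; a norm computation settles it only in special cases such as $n$ even and $k=1$. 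The paper avoids the field theory entirely: if $\beta_{n,k}$ were Perron, then any Galois conjugate of modulus greater than $1$ would produce a conjugate of $\gamma_{n}$ of modulus greater than $1$, contradicting that $\gamma_{n}$ is Pisot, so $\beta_{n,k}$ would have to be Pisot; but $\beta_{n,k}\leq\gamma_{n}^{1/2}<2^{1/2}$, the only Pisot numbers below $2^{1/2}$ are $\theta_{0}$ and $\theta_{1}$, and both exceed $2^{1/3}$, which disposes of every $k\geq2$ at once and leaves only $k=1$ to be checked against the minimal polynomials $x^{3}-x-1$ and $x^{4}-x^{3}-1$. I recommend adopting that argument; it closes the gap you identified and is shorter than the field-theoretic route. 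Parts \ref{item:4:Density2} and \ref{item:3:Density2}'s reliance on ``sofic implies Perron'' are otherwise in line with the paper.
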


Part \ref{item:4:Density2} is well known and holds for all greedy, and hence lazy, $\beta$-transformations (see for instance \cite{G:1990,P:1979}).  It is included here both for completeness and to emphasise the dynamical differences which can occur between the greedy, and hence the lazy, \mbox{$\beta$-transformation} (shift) and an associated intermediate $\beta$-transformation (shift).

To verify Theorem~\ref{thm:SFTandTransitive}\ref{item:2:Density2} we apply Theorem~\ref{thm:ESFTP} given below.  This latter result is a generalisation of the following result of Parry \cite{P:1960}.  Here and in the sequel, $\tau^{\pm}_{\beta, \alpha}(p)$ denotes the points in the associated intermediate $\beta$-shifts which are a $\beta$-expansion of $p + \alpha/(\beta - 1)$, where $p = p_{\beta, \alpha}$.

\begin{theorem}[\cite{P:1960}]\label{thm:Parry1960}
For $\beta \in (1, 2)$, we have
\begin{enumerate}
\item\label{thmA:2} the greedy $\beta$-shift is a subshift of finite type if and only if $\tau^{-}_{\beta, 0}(p)$ is periodic and
\item\label{thmA:3} the lazy $\beta$-shift is a subshift of finite type if and only if $\tau^{+}_{\beta, 2-\beta}(p)$ is periodic.
\end{enumerate}
\end{theorem}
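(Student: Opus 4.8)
My plan is to reduce both statements to a single lexicographic admissibility criterion for the greedy $\beta$-shift and then to translate the subshift-of-finite-type property into a finiteness condition on the associated kneading sequence. First I would record the admissibility description of $\Omega_{\beta,0}$: writing $\sigma$ for the left shift and $\preceq$ for the lexicographic order on $\{0,1\}^{\mathbb{N}}$, a sequence $\omega$ lies in the greedy $\beta$-shift if and only if $\sigma^{n}(\omega) \preceq \tau$ for every $n \in \mathbb{N}_{0}$, where $\tau \coloneqq \tau^{-}_{\beta,0}(p)$ is the $\beta$-expansion of $p + \alpha/(\beta-1) = p = 1/\beta$ produced under the $T^{-}$ convention at the discontinuity. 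The sequence $\tau$ is itself shift-maximal, that is $\sigma^{n}(\tau) \preceq \tau$ for all $n$. Since the alphabet is $\{0,1\}$, a failure $\sigma^{n}(\omega) \succ \tau$ occurs precisely when there is a first coordinate $j$ at which $(\sigma^{n}\omega)_{j} = 1$ while $\tau_{j} = 0$ and the earlier coordinates agree. Hence $\omega$ is admissible if and only if it avoids, as a factor, every word in the family $f_{j} \coloneqq \tau_{1}\cdots\tau_{j-1}1$ taken over those indices $j$ with $\tau_{j} = 0$; this explicit list of forbidden words is the object that governs the type of the shift.

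For the implication that periodicity of $\tau$ yields a subshift of finite type, suppose $\tau$ has period $m$, so that $\tau_{j-m} = \tau_{j}$ for all $j > m$. I would show that every long forbidden word contains a shorter one: if $\tau_{j} = 0$ with $j > m$, then also $\tau_{j-m} = 0$, and by periodicity the length-$(j-m)$ suffix $\tau_{m+1}\cdots\tau_{j-1}1$ of $f_{j}$ equals $\tau_{1}\cdots\tau_{j-m-1}1 = f_{j-m}$. Iterating, every $f_{j}$ contains a forbidden factor of length at most $m$, so $\Omega_{\beta,0}$ is defined by forbidden words of bounded length and is therefore a subshift of finite type.

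The harder direction is the converse, and this is where I expect the genuine difficulty to lie. Assume $\Omega_{\beta,0}$ is a subshift of finite type, so there is an integer $N$ such that the admissible continuations of a finite word depend only on its last $N-1$ symbols and, equivalently, every forbidden word has a forbidden factor of length at most $N$. Applying this to the shifts $\sigma^{n}(\tau)$ and using that only finitely many words of length $N-1$ exist, two distinct shifts $\sigma^{i}(\tau)$ and $\sigma^{j}(\tau)$ with $i < j$ share a length-$(N-1)$ prefix and hence the same follower set; since shift-maximality of $\tau$ forces each $\sigma^{n}(\tau)$ to be the lexicographically largest admissible continuation of its own prefix, this yields $\sigma^{i}(\tau) = \sigma^{j}(\tau)$, so $\tau$ is eventually periodic. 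The delicate and decisive step is to upgrade eventual periodicity to \emph{pure} periodicity: a merely eventually periodic $\tau$ corresponds only to a sofic shift, so the preperiod must be eliminated. Here I would argue that a nontrivial preperiod produces infinitely many forbidden words $f_{j}$ none of which contains a shorter forbidden factor, contradicting the bound $N$; the shift-maximality relations $\sigma^{n}(\tau) \preceq \tau$ are what rule the preperiod out. This gives that $\tau = \tau^{-}_{\beta,0}(p)$ is periodic and completes part~\ref{thmA:2}.

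Finally, I would deduce the lazy statement~\ref{thmA:3} from~\ref{thmA:2} using the reflection recorded in the introduction, $T^{-}_{\beta,\alpha}(x) = 1 - T^{+}_{\beta,2-\beta-\alpha}(1-x)$. The involution $x \mapsto 1-x$ conjugates $T^{-}_{\beta,0}$ to $T^{+}_{\beta,2-\beta}$ and induces the symbol relabelling $0 \leftrightarrow 1$, a homeomorphism that commutes with $\sigma$ and preserves the class of subshifts of finite type. Under it the lazy $\beta$-shift $\Omega_{\beta,2-\beta}$ corresponds to the greedy $\beta$-shift $\Omega_{\beta,0}$ and the kneading sequence $\tau^{+}_{\beta,2-\beta}(p)$ corresponds to $\tau^{-}_{\beta,0}(p)$, so periodicity of one is equivalent to periodicity of the other. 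Thus~\ref{thmA:3} is the image of~\ref{thmA:2} under this symmetry, and the only substantial work is the purity-of-period argument in the converse of~\ref{thmA:2}; everything else is bookkeeping with the lexicographic order.
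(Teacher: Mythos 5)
First, a framing point: the paper does not prove this theorem at all --- it is quoted from Parry \cite{P:1960} --- so the only in-paper benchmark is the proof of the intermediate analogue, Proposition~\ref{prop:ESFTP}, whose ``only if'' direction rests entirely on the zero-full/one-full Lemma~\ref{lem:keylemma}. Against that benchmark, your forward direction (periodicity implies finite type) is correct and standard: every forbidden word of length exceeding the period contains a shorter forbidden suffix, so finitely many forbidden words suffice. One repair is needed in the set-up: the lexicographic criterion must be phrased with $\tau^{-}_{\beta,0}(1)=\sigma(\tau^{-}_{\beta,0}(p))$, the quasi-greedy expansion of $1$, not with $\tau^{-}_{\beta,0}(p)$ itself. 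Since $\tau^{-}_{\beta,0}(p)$ begins with the symbol $0$, the condition $\sigma^{n}(\omega)\preceq\tau^{-}_{\beta,0}(p)$ for all $n$ excludes every sequence containing a $1$, and your forbidden word $f_{1}$ degenerates to the single letter $1$. This is an off-by-one slip (periodicity of the two sequences is equivalent for $\beta\in(1,2)$), but it must be fixed before the family $f_{j}$ means what you want. Your reduction of part (ii) to part (i) via $x\mapsto 1-x$ and the symbol swap is exactly the paper's symmetry (Theorem~\ref{thm:BM2011}) and is fine.

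The genuine gap is in the converse of part (i). Your follower-set pigeonhole correctly yields that $\tau$ is \emph{eventually} periodic, but the promotion to \emph{pure} periodicity --- which you yourself flag as ``the delicate and decisive step'' --- is asserted, not proved. This is precisely the content that separates finite type from sofic: an eventually periodic kneading sequence with a nontrivial preperiod still defines a sofic shift, so nothing you have written excludes it, and appealing to ``eventually periodic corresponds only to a sofic shift'' as known is circular in this context. To close the gap you must actually show that a nontrivial preperiod produces infinitely many $f_{j}$ with no shorter forbidden factor; concretely, a proper forbidden factor of $f_{j}$ is necessarily a suffix $\tau_{i+1}\cdots\tau_{j-1}1=f_{j-i}$, which occurs if and only if $\sigma^{i}(\tau)$ agrees with $\tau$ for $j-i-1$ symbols and $\tau_{j-i}=0$, and one must prove this fails for infinitely many $j$ whenever $\tau$ is not purely periodic. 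That is exactly the combinatorial work the paper packages into Lemma~\ref{lem:keylemma}, which proves the contrapositive (not periodic implies not of finite type) directly and would, incidentally, make your eventual-periodicity step superfluous. As written, the one genuinely hard step of the theorem is missing.
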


\begin{theorem}\label{thm:ESFTP}
Let $\beta \in (1, 2)$ and $\alpha \in (0, 2 - \beta)$.  The intermediate $\beta$-shift $\Omega_{\beta, \alpha}$ is a subshift of finite type if and only if both $\tau^{\pm}_{\beta, \alpha}(p)$ are periodic.
\end{theorem}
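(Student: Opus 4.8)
The plan is to reduce the statement to the combinatorics of the two kneading sequences. Write $\kappa^{-} := \tau^{-}_{\beta,\alpha}(p)$ and $\kappa^{+} := \tau^{+}_{\beta,\alpha}(p)$, so that (since $T^{-}_{\beta,\alpha}(p)=1$ and $T^{+}_{\beta,\alpha}(p)=0$) the sequence $\kappa^{-}$ begins with the symbol $0$ and $\kappa^{+}$ with $1$. I will use the lexicographic description of $\Omega_{\beta,\alpha}$ coming from the order-preserving itinerary map, namely the self-admissibility condition of \cite{BV:2012,HS:1990}: a word $\omega$ lies in $\Omega_{\beta,\alpha}$ if and only if, for every $n \geq 0$, one has $\sigma^{n}(\omega) \preceq \kappa^{-}$ whenever $\omega_{n+1}=0$ and $\sigma^{n}(\omega) \succeq \kappa^{+}$ whenever $\omega_{n+1}=1$, where $\preceq$ denotes the lexicographic order and $\sigma^{n}(\omega)=(\omega_{n+1},\omega_{n+2},\dots)$. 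In particular $\kappa^{-}$ is the lexicographically largest element of $\Omega_{\beta,\alpha}$ beginning with $0$ and $\kappa^{+}$ the smallest beginning with $1$, and both are themselves admissible. Throughout I use that a subshift is of finite type precisely when it has only finitely many minimal forbidden words, equivalently when there is an $N$ such that membership is decided by inspecting all factors of length $N$.

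For the implication that periodicity forces finite type, suppose $\kappa^{-}=(b_{1}\cdots b_{t})^{\infty}$ and $\kappa^{+}=(c_{1}\cdots c_{s})^{\infty}$. I would take the finite forbidden set $F$ consisting of the words $b_{1}\cdots b_{k-1}1$ for those $1\leq k\leq t$ with $b_{k}=0$, together with the words $c_{1}\cdots c_{k-1}0$ for those $1\leq k\leq s$ with $c_{k}=1$. Every element of $\Omega_{\beta,\alpha}$ clearly avoids $F$. Conversely, if $\omega$ avoids $F$ but fails admissibility, there is a position $n$ and a least $m\geq 1$ at which, say, $\omega_{n+1}=0$ while $\omega_{n+1}\cdots\omega_{n+m}=b_{1}\cdots b_{m-1}1$ with $b_{m}=0$; writing $m\equiv r\pmod{t}$ with $1\leq r\leq t$ and using $b_{qt+i}=b_{i}$, one finds that the block $\omega_{n+qt+1}\cdots\omega_{n+qt+r}$ equals $b_{1}\cdots b_{r-1}1\in F$, a contradiction. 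The symmetric computation with $\kappa^{+}$ handles the lower bound. Hence $\Omega_{\beta,\alpha}$ is exactly the set of words avoiding $F$, and so is of finite type. This direction is explicit and I expect it to be routine.

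For the reverse implication, assume $\Omega_{\beta,\alpha}$ is of finite type, with admissibility decided by factors of length $N$. Applying the pigeonhole principle to the length-$N$ windows $\kappa^{-}_{i+1}\cdots\kappa^{-}_{i+N}$ of $\kappa^{-}$ yields $i<j$ with equal windows. Since $\Omega_{\beta,\alpha}$ is $N$-step, the concatenation $\kappa^{-}_{1}\cdots\kappa^{-}_{i}\,\sigma^{j}(\kappa^{-})$ again lies in $\Omega_{\beta,\alpha}$, begins with $0$, and hence by maximality of $\kappa^{-}$ is $\preceq\kappa^{-}$; as the two share the prefix $\kappa^{-}_{1}\cdots\kappa^{-}_{i}$, this gives $\sigma^{j}(\kappa^{-})\preceq\sigma^{i}(\kappa^{-})$, and the symmetric gluing gives the reverse inequality. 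Therefore $\sigma^{i}(\kappa^{-})=\sigma^{j}(\kappa^{-})$, so $\kappa^{-}$ is eventually periodic; the dual argument, using minimality of $\kappa^{+}$ among words beginning with $1$, shows $\kappa^{+}$ is eventually periodic.

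The main obstacle is to upgrade eventual periodicity to genuine periodicity, since an eventually periodic kneading sequence corresponds in general only to a sofic shift; this is precisely where the finite-type hypothesis must be used in full and where the self-referential nature of $\kappa^{\pm}$ is essential. I would argue by contraposition: if $\kappa^{-}=P(b_{1}\cdots b_{t})^{\infty}$ has a nonempty minimal preperiod $P$, then for each position $m$ with $\kappa^{-}_{m}=0$ the word $\kappa^{-}_{1}\cdots\kappa^{-}_{m-1}1$ is forbidden, and I would show that for infinitely many such $m$ it is \emph{minimally} forbidden. The point is that any shorter forbidden factor of it must end in the final symbol and would force a proper suffix of the matched segment $\kappa^{-}_{1}\cdots\kappa^{-}_{m-1}$ to coincide with a prefix of $\kappa^{-}$ of comparable length; the presence of the preperiod prevents such a realignment once $m$ is large, so the minimal forbidden words have unbounded length, contradicting finite type. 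Hence $P$ is empty and $\kappa^{-}$ is periodic, and likewise $\kappa^{+}$. Making the realignment obstruction precise—controlling exactly when a suffix of an initial segment of $\kappa^{-}$ can agree with a prefix of $\kappa^{-}$, and disposing of the degenerate case in which the periodic tail is constant—is the delicate heart of the argument and the step I expect to demand the most care.
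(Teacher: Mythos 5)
Your forward (periodicity $\Rightarrow$ SFT) direction and your pigeonhole/gluing argument for eventual periodicity are both sound, and the latter is in fact a step the paper does not need to take. But the reverse implication as a whole has a genuine gap, and it sits exactly where you say it does: the claim that, when the kneading sequence has a nonempty preperiod, the words $\kappa^{-}_{1}\cdots\kappa^{-}_{m-1}1$ are \emph{minimally} forbidden for infinitely many $m$. Minimality requires that $\kappa^{-}_{2}\cdots\kappa^{-}_{m-1}1$ be admissible, i.e.\ that a suitable infinite extension of this flipped word satisfy the lexicographic constraints at \emph{every} intermediate shift, not just at the first position. This is false for arbitrary large $m$ with $\kappa^{-}_{m}=0$: whenever some suffix $\kappa^{-}_{n}\cdots\kappa^{-}_{m-1}$ ($n\geq 2$) coincides with the prefix $\kappa^{-}_{1}\cdots\kappa^{-}_{m-n}$ and $\kappa^{-}_{m-n+1}=0$, the flipped word is itself inadmissible. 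Your proposal acknowledges this ``realignment obstruction'' but offers no argument that it can be avoided infinitely often; asserting that ``the presence of the preperiod prevents such a realignment once $m$ is large'' is precisely the statement that needs proof. This is the content of the paper's Lemma~\ref{lem:keylemma} (zero-/one-fullness), whose proof selects the positions $n_{k}$ very carefully --- as the successive points where $\sigma^{n_{k-1}-1}(\sigma(\kappa^{-}))$ first drops strictly below $\kappa^{-}$ --- and then runs a multi-case verification over all shifts $\sigma^{m}(\theta)$ of the modified word. That lemma is the technical heart of the theorem and occupies most of the work; without it (or an equivalent), the hard direction is unproven. Note also that the paper's lemma treats all non-periodic kneading sequences uniformly, so the detour through eventual periodicity, while correct, is not actually needed once the lemma is available.

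A secondary, fixable issue: the lexicographic characterisation you quote (``$\sigma^{n}(\omega)\preceq\kappa^{-}$ whenever $\omega_{n+1}=0$ and $\sigma^{n}(\omega)\succeq\kappa^{+}$ whenever $\omega_{n+1}=1$'') describes the \emph{extended} shift $\widetilde{\Omega}_{\beta,\alpha}$, not $\Omega_{\beta,\alpha}$; the latter carries the additional constraints $\sigma(\kappa^{+})\preceq\sigma^{n}(\omega)\preceq\sigma(\kappa^{-})$ (for instance $(\overline{0})\in\widetilde{\Omega}_{\beta,\alpha}$ always, but $(\overline{0})\notin\Omega_{\beta,\alpha}$ for $\alpha>0$). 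Your argument, once completed, would prove the statement for $\widetilde{\Omega}_{\beta,\alpha}$, and one still needs the equivalence of the two SFT properties, which the paper supplies as Proposition~\ref{prop:SFText->SFT}.
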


The following example demonstrates that it is possible to have that one of the sequences $\tau^{\pm}_{\beta, \alpha}(p)$ is periodic and that the other is not periodic.

\begin{example}
Let $\beta = \gamma_{2}$, $\alpha = 1/\beta^{4}$ and $p = (1 - \alpha)/\beta =1/\beta^{3}$.  Clearly $\alpha \in (0, 2 - \beta)$.  An elementary calculation will show that $\tau_{\beta, \alpha}^{-}(p)$ is the infinite periodic word with period $(0,1,1,0)$ and $\tau_{\beta, \alpha}^{+}(p)$ is the concatenation of the finite word $(1, 0, 0)$ with the infinite periodic word with period $(1,0)$.
\end{example}

For $\beta > 2$ and $\alpha \in [0, 1]$ the forward implications of Theorems~\ref{thm:Parry1960} and \ref{thm:ESFTP} can be found in, for instance, \cite[Theorem 6.3]{Will:1975}.  Further, for $\beta > 1$, necessary and sufficient conditions when $\Omega_{\beta, \alpha}$ is sofic shift is given in \cite[Theorem 2.14]{KS:2012}.  This latter result, in fact, gives that if $\beta \in (1, 2)$ is a Pisot number, then along the fiber
$\Delta(\beta) \coloneqq \{( b, \alpha) \in \Delta \colon b = \beta\}$ there exists a dense set of points $(\beta,\alpha)$ such that $\Omega_{\beta, \alpha}$ is a sofic shift.

This leads us to the final problem of studying how a greedy (or lazy) $\beta$-shift being a subshift of finite type is related to the corresponding intermediate $\beta$-shifts being a subshift of finite type.  We already know that it is possible to find intermediate $\beta$-shifts of finite type in the fibres $\Delta(\beta)$ even though the corresponding greedy and lazy $\beta$-shifts are not subshift of finite type. Given this, a natural question to ask is, can one determine when no intermediate $\beta$-shift is a subshift of finite type, for a given fixed $\beta$.  This is precisely what we address in the following result which is an almost immediate application of the characterisation provided in Theorems~\ref{thm:Parry1960} and \ref{thm:ESFTP}.

\begin{corollary}\label{cor:fibersSFT}
If $\beta \in (1, 2)$ is not the solution of any polynomial of finite degree with coefficients in $\{-1,0, 1\}$, then for all $\alpha \in [0, 2-\beta]$ the intermediate $\beta$-shift $\Omega_{\beta, \alpha}$ is not a subshift of finite type.
\end{corollary}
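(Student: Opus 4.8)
The plan is to prove the contrapositive: I will show that if $\Omega_{\beta,\alpha}$ is a subshift of finite type for \emph{some} $\alpha \in [0,2-\beta]$, then $\beta$ is a root of a nonzero polynomial with coefficients in $\{-1,0,1\}$; the hypothesis on $\beta$ then rules this out for every $\alpha$. Throughout I use the description recalled just before Theorem~\ref{thm:Parry1960}, namely that both sequences $\tau^{\pm}_{\beta,\alpha}(p)$ are $\beta$-expansions of the single point $c \coloneqq p + \alpha/(\beta-1)$, where $p = (1-\alpha)/\beta$; a short computation gives $c = (\beta-1+\alpha)/(\beta(\beta-1))$, and in particular $c = 1/\beta$ when $\alpha = 0$. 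I also read ``periodic'' as ``purely periodic'', in accordance with the convention fixed by the Example, where the eventually periodic word $(1,0,0)\,\overline{(1,0)}$ is declared not periodic.

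For the interior case, suppose $\alpha \in (0,2-\beta)$ and $\Omega_{\beta,\alpha}$ is a subshift of finite type. By Theorem~\ref{thm:ESFTP} both $\tau^{+}_{\beta,\alpha}(p)$ and $\tau^{-}_{\beta,\alpha}(p)$ are purely periodic, say with digits $(\omega_k^+)$ and $(\omega_k^-)$ in $\{0,1\}$ and with a common period $q$ (take the least common multiple of the two periods). Since $\beta c = 1 + \alpha/(\beta-1)$ satisfies $1 < \beta c < 1/(\beta-1)$ for $\alpha \in (0,2-\beta)$, the point $c$ admits an expansion with leading digit $1$ and one with leading digit $0$, so these two expansions are genuinely distinct and $d_k \coloneqq \omega_k^- - \omega_k^+$ defines a purely periodic sequence with entries in $\{-1,0,1\}$ that is not identically zero (indeed $d_1 = \pm 1$). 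Subtracting the two identities $\sum_k \omega_k^{\pm}\beta^{-k} = c$ gives $\sum_{k\ge 1} d_k \beta^{-k} = 0$, and summing this purely periodic geometric series yields $\sum_{k=1}^{q} d_k\beta^{q-k} = 0$. This is a nonzero polynomial of degree at most $q-1$ with coefficients in $\{-1,0,1\}$ vanishing at $\beta$, the desired contradiction. The point of subtracting the two expansions is that the $\alpha$-dependence of $c$ cancels completely, leaving a homogeneous relation that is automatically $\{-1,0,1\}$-valued.

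For the boundary cases the subtraction is unavailable, since only one kneading sequence is controlled, but the explicit value of $c$ substitutes for it. When $\alpha = 0$ the greedy part of Theorem~\ref{thm:Parry1960} gives that $\tau^{-}_{\beta,0}(p)$ is purely periodic, say of period $q$ with digits $(\omega_j)$ in $\{0,1\}$; using $c = 1/\beta$, pure periodicity gives $1/\beta = \big(\sum_{j=1}^q \omega_j\beta^{q-j}\big)/(\beta^q-1)$, whence $\beta^q - \sum_{j=1}^q \omega_j\beta^{q+1-j} - 1 = 0$. The $j=1$ term merges with $\beta^q$ into the coefficient $1-\omega_1 \in \{0,1\}$, the constant term is $-1$, and the remaining coefficients are $-\omega_j \in \{-1,0\}$, so once more $\beta$ is a root of a nonzero $\{-1,0,1\}$-polynomial. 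The case $\alpha = 2-\beta$ is identical after applying the reflection $T^{-}_{\beta,\alpha}(x) = 1 - T^{+}_{\beta,2-\beta-\alpha}(1-x)$, which interchanges the greedy and lazy pictures and reduces it to the computation just performed.

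The one genuinely delicate point, which I expect to be the main obstacle, is the coefficient bound rather than the algebraicity of $\beta$: if ``periodic'' were read as merely eventually periodic, a nonempty pre-period would force the cleared-denominator polynomial to pick up coefficients in $\{-2,\dots,2\}$, which is too weak for the conclusion. Everything therefore hinges on pure periodicity and, in the interior case, on the cancellation produced by subtracting the two expansions of the common point $c$; the remaining care is simply to confirm that these two interior expansions are distinct, so that $(d_k)$ is not the zero sequence.
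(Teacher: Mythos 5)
Your proposal is correct and follows essentially the same route as the paper: the paper's proof also subtracts the two $\beta$-expansions of $p$ so that the $\alpha$-dependence cancels and the resulting relation $\sum_{k}(\omega_k^- - \omega_k^+)\beta^{-k}=0$ has coefficients in $\{-1,0,1\}$, then invokes the periodicity supplied by Theorems~\ref{thm:Parry1960} and~\ref{thm:ESFTP} to clear denominators into a finite-degree polynomial. Your explicit treatment of the boundary cases $\alpha\in\{0,2-\beta\}$ and of the non-vanishing of the difference sequence (via $\omega_1^-=0$, $\omega_1^+=1$) just makes precise details the paper leaves implicit.
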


We observe that the values $\beta_{n, k}$, which are considered in Theorem~\ref{thm:SFTandTransitive}, are indeed a solution of a polynomial with coefficients in the set $\{ -1, 0, 1\}$ and thus do not satisfy the conditions of Corollary~\ref{cor:fibersSFT}; this is verified in the proof of Theorem~\ref{thm:SFTandTransitive}\ref{item:3:Density2}.

The phenomenon that the map $T_{\beta, \alpha}$ is not transitive but where the associated shift space $\Omega_{\beta,\alpha}$ is a subshift of finite type also appear in the setting of $(-\beta)$-transformations.  Indeed recently Ito and Sadahiro \cite{IS:2009} studied the expansions of a number in the base $(-\beta)$, with $\beta > 1$.  These expansions are related to a specific piecewise linear map with constant slope defined on the interval $[-\beta/(\beta + 1), 1/(\beta+ 1)]$ by $x \mapsto -\beta x - \lfloor -\beta x + \beta/(\beta + 1) \rfloor$.  (Here for $a \in \mathbb{R}$, we denote by $\lfloor a \rfloor$ the largest integer $n$ so that $n \leq a$.)  Such a map is called a \textit{negative $\beta$-transformation}.  The corresponding symbolic space space is called a \textit{($-\beta$)-shift} which is a subset of $\{ 0, 1\}^{\mathbb{N}}$ when $\beta < 2$.  By a result of Liao and Steiner \cite{LS:2012}, in contrast to the lazy or greedy $\beta$-shifts, for all $\beta < \gamma_{2}$, the negative $\beta$-transformations are not transitive.  On the other hand, a result of Frougny and Lai \cite{FL:2009} shows that a ($-\beta$)-shift is of finite type if and only if the representation of $-\beta/(\beta+1)$ in the ($-\beta$)-shift is periodic (see \eqref{eq:def_periodic} for the definition of a periodic word).  Thus one can construct examples of $\beta \in (1, 2)$ so that the negative $\beta$-transformation is not transitive but where the ($-\beta$)-shift is a subshift of finite type.  For instance, if we take the periodic sequence
\begin{align*}
(\overline{1, \underbrace{0, 0, \dots, 0}_{2k-\text{times}}, 1})
= (1, \underbrace{0, 0, \dots, 0}_{2k-\text{times}}, 1, 1, \underbrace{0, 0, \dots, 0}_{2k-\text{times}}, 1, 1, \underbrace{0, 0, \dots, 0}_{2k-\text{times}}, 1, \dots ),
\end{align*}
with $k \geq 1$, as the $(-\beta)$-expansion of $-\beta/(\beta + 1)$, then we obtain a $(-\beta)$-transformation which is not transitive and whose associated ($-\beta$)-shift is a subshift of finite type.  An interesting further problem would be to examine the corresponding questions for intermediate negative $\beta$-transformations and intermediate ($-\beta$)-shifts.  We refer the reader to \cite{HMP:2013} and references their in for further details on intermediate negative $\beta$-transformations and intermediate ($-\beta$)-shifts.

\subsection{Outline}

\noindent In Section~\ref{sec:Defn} we present basic definitions, preliminaries and auxiliary results required to prove Theorems~\ref{thm:SFTandTransitive} and \ref{thm:ESFTP} and Corollary~\ref{cor:fibersSFT}.  The proofs of Theorem~\ref{thm:ESFTP} and Corollary~\ref{cor:fibersSFT} are presented in Section~\ref{sec:TheoremA} and the proof of Theorem~\ref{thm:SFTandTransitive} is given in Section~\ref{sec:TheoremB}.\\\vspace{0.5in}

\section{Definitions and auxiliary results}\label{sec:Defn}

\subsection{Subshifts}\label{section:SFT}

\noindent We equipped the space $\{0,1\}^\mathbb{N}$ of infinite sequences with the topology induced by the word metric $d \colon \{0,1\}^\mathbb{N} \times \{0,1\}^\mathbb{N} \to \mathbb{R}$ which is given by
\begin{align*}
d(\omega, \nu) \coloneqq
\begin{cases}
0 & \text{if} \; \omega = \nu,\\
2^{- \lvert\omega \wedge \nu\rvert + 1} & \text{otherwise}.
\end{cases}
\end{align*}
Here $\rvert \omega \wedge \nu \lvert \coloneqq \min \, \{ \, n \in \mathbb{N} \colon \omega_{n} \neq \nu_n \}$, for all $\omega = (\omega_{1}, \omega_{2}, \dots) , \nu = ( \nu_{1} \, \nu_{2}, \dots) \in \{0, 1\}^{\mathbb{N}}$ with $\omega \neq \nu$. We let $\sigma$ denote the \textit{left-shift map} which is defined on the set
\begin{align*}
\{ 0, 1\}^{\mathbb{N}} \, \cup \, \{ \emptyset \} \, \cup \, \bigcup_{n = 1}^{\infty}\{ 0, 1\}^{n},
\end{align*}
of finite and infinite words over the alphabet $\{0, 1\}$ by $\sigma(\omega) \coloneqq \emptyset$, if $\omega \in \{ 0, 1\} \cup \{ \emptyset \}$, and otherwise we set  $\sigma(\omega_{1}, \omega_{2}, \dots) \coloneqq (\omega_{2}, \omega_{3}, \dots)$.  A \textit{shift invariant subspace} is a set $\Omega \subseteq \{0,1\}^\mathbb{N}$ such that $\sigma(\Omega) \subseteq \Omega$ and a \textit{subshift} is a closed shift invariant subspace.

Given a subshift $\Omega$ and $n \in \mathbb{N}$ set
\begin{align*}
\Omega\lvert_{n} \coloneqq \bigg\{ (\omega_{1}, \dots, \omega_{n}) \in \{ 0, 1\}^{n} \colon \,\text{there exists} \; (\xi_{1},  \dots ) \in \Omega \; \text{with} \; (\xi_{1}, \dots, \xi_{n}) = (\omega_{1}, \dots, \omega_{n}) \bigg\}
\end{align*}
and write $\Omega\lvert^{*} \coloneqq \bigcup_{k = 1}^{\infty} \Omega\lvert_{k}$ for the collection of all finite words.

A subshift $\Omega$ is a called a \textit{subshift of finite type} (SFT) if there exists an $M \in \mathbb{N}$ such that, for all $\omega = (\omega_{1}, \omega_{2}, \dots, \omega_{n}), \xi = (\xi_{1}, \xi_{2}, \dots, \xi_{m}) \in \Omega\lvert^{*}$ with $n, m \in \mathbb{N}$ and $n \geq M$,
\[
(\omega_{n - M + 1}, \omega_{n - M + 2}, \dots, \omega_{n}, \xi_{1}, \xi_{2}, \dots, \xi_{m}) \in \Omega\lvert^{*}
\]
if and only if
\[
(\omega_{1}, \omega_{2}, \dots, \omega_{n}, \xi_{1}, \xi_{2}, \dots, \xi_{m}) \in \Omega\lvert^{*}.
\]
The following result gives an equivalent condition for when a subshift is a subshift of finite type (see, for instance, \cite[Theorem 2.1.8]{LM:1995} for a proof of the equivalence).  For this we require the following notation.   For $n \in \mathbb{N}$ and $\omega = (\omega_{1}, \omega_{2}, \dots) \in \{ 0, 1\}^{\mathbb{N}}$, we set $\omega\lvert_{n} \coloneqq (\omega_{1}, \omega_{2}, \dots, \omega_{n})$.  Also, for a finite word $\xi \in \{ 0, 1 \}^{n}$, for some $n \in \mathbb{N}_{0}$, we let $\lvert \xi \rvert$ denote the length of $\xi$, namely the value $n$.

\begin{theorem}[{\cite{LM:1995}}]\label{thm:equivalent-def-SFT}
The space $\Omega$ is a SFT if and only if there exists a finite set $F$ of finite words in the alphabet $\{0, 1\}$ such that if $\xi \in F$, then $\sigma^{m}(\omega)\lvert_{\lvert \xi \rvert} \neq \xi$, for all $\omega \in \Omega$ and $m \in \mathbb{N}_{0}$.
\end{theorem}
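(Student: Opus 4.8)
The plan is to read the displayed condition in the way the cited source intends, namely as the assertion that $\Omega$ is \emph{exactly} the set of sequences avoiding a finite list of words, and then to prove the resulting equivalence in two directions. For a finite set $F$ of finite words, write $X_{F} \coloneqq \{ \omega \in \{0,1\}^{\mathbb{N}} \colon \sigma^{m}(\omega)\lvert_{\lvert \xi \rvert} \neq \xi \text{ for all } \xi \in F \text{ and } m \in \mathbb{N}_{0} \}$ for the set of sequences in which no word of $F$ occurs as a factor. The content of the theorem is that $\Omega$ is an SFT if and only if $\Omega = X_{F}$ for some finite $F$; this is the sense in which the words of $F$ are \emph{forbidden}. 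Throughout I would use two standing facts: that $\Omega\lvert^{*}$ is closed under taking factors (subwords), and that, because $\Omega$ is closed, a sequence $\omega$ lies in $\Omega$ as soon as every prefix $\omega\lvert_{n}$ lies in $\Omega\lvert^{*}$, since the witnesses $\xi^{(n)} \in \Omega$ with $\xi^{(n)}\lvert_{n} = \omega\lvert_{n}$ converge to $\omega$ in the metric $d$.

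For the forward implication I would assume $\Omega$ is an SFT with memory constant $M$ as in the preceding definition and take $F \coloneqq \{0,1\}^{M+1} \setminus \Omega\lvert_{M+1}$, the finite set of length-$(M+1)$ words that are not factors of points of $\Omega$, claiming $\Omega = X_{F}$. The inclusion $\Omega \subseteq X_{F}$ is immediate, since every length-$(M+1)$ window of a point of $\Omega$ lies in $\Omega\lvert_{M+1}$. For $X_{F} \subseteq \Omega$ I would fix $\omega \in X_{F}$ and prove $\omega\lvert_{n} \in \Omega\lvert^{*}$ for all $n$ by induction: for $n \le M+1$ this holds because $\omega\lvert_{M+1} \notin F$ and $\Omega\lvert^{*}$ is factor-closed, and for the step from $n \ge M+1$ I would apply the memory condition to append the single symbol $\omega_{n+1}$ to $\omega\lvert_{n}$, the hypothesis being met precisely because the window $(\omega_{n-M+1}, \dots, \omega_{n+1}) \notin F$ lies in $\Omega\lvert_{M+1} \subseteq \Omega\lvert^{*}$. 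Closedness of $\Omega$ then gives $\omega \in \Omega$.

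For the reverse implication I would assume $\Omega = X_{F}$ and put $L \coloneqq \max_{\xi \in F} \lvert \xi \rvert$ and $M \coloneqq \max(1, L-1)$. The easy half of the memory condition, that admissibility of $v \coloneqq (\omega_{1}, \dots, \omega_{n}, \xi_{1}, \dots, \xi_{m})$ forces admissibility of its factor $u \coloneqq (\omega_{n-M+1}, \dots, \omega_{n}, \xi_{1}, \dots, \xi_{m})$, is immediate from factor-closedness. For the converse, given $\omega \in \Omega\lvert^{*}$ with $\lvert \omega \rvert = n \ge M$, given $\xi \in \Omega\lvert^{*}$, and given $u \in \Omega\lvert^{*}$, I would pick $a \in \Omega$ with $a\lvert_{n} = \omega$ and $b \in \Omega$ containing $u$ as a factor, and define $\zeta$ by copying $\omega_{1}, \dots, \omega_{n}$ and then the continuation of the occurrence of $u$ inside $b$; this arranges $\zeta\lvert_{n+m} = v$ while making $\sigma^{n-M}(\zeta)$ a shift of $b$. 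Since $M = L-1$, no word of length at most $L$ can simultaneously use a symbol in position $\le n-M$ and a symbol in position $> n$, so every such window of $\zeta$ lies either within $\omega = a\lvert_{n}$ or within the shift of $b$; as $a, b \in X_{F}$, none of these windows lies in $F$. Hence $\zeta \in X_{F} = \Omega$ and $v \in \Omega\lvert^{*}$.

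I would expect this reverse direction to be the crux, and specifically the boundary-crossing bookkeeping behind the splice: the argument hinges on the numerical fact that with $M = L-1$ every forbidden-length window meeting the join at position $n$ is already realised inside the single witness $b \in X_{F}$, so that gluing creates no new occurrences of words from $F$. Using two witnesses $a$ and $b$, one for the prefix and one carrying $u$ together with a legal continuation, is what makes this work uniformly, in particular when $\xi$ is shorter than $M$. The only other point requiring care is the interpretation flagged at the outset: the displayed condition must be read as $\Omega = X_{F}$, with $F$ a \emph{defining} list of forbidden factors, since merely forbidding finitely many words that happen to be absent from $\Omega$ does not by itself force $\Omega$ to be of finite type.
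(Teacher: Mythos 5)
Your proof is correct. Note first that the paper does not supply its own proof of this statement: it is quoted from \cite{LM:1995} (Theorem 2.1.8 there), with the reader referred to that source, so there is no internal argument to compare against. Your two directions are essentially the standard textbook argument. In the forward direction, taking $F \coloneqq \{0,1\}^{M+1}\setminus \Omega\lvert_{M+1}$ and recovering membership in $\Omega$ by induction on prefixes (appending one symbol at a time via the memory condition, then invoking closedness of $\Omega$) is exactly the usual higher-block/memory argument; the one micro-step you leave implicit, that the single symbol $\omega_{n+1}$ lies in $\Omega\lvert^{*}$ so that the memory condition is applicable, follows from your own observation that the window $(\omega_{n-M+1},\dots,\omega_{n+1})$ lies in $\Omega\lvert_{M+1}$, which is factor-closed. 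In the reverse direction, the splice of a witness $a$ for the prefix with a witness $b$ carrying $u$ and its continuation, together with the counting fact that a window of length at most $L$ cannot simultaneously touch positions $\le n-M$ and $>n$ when $M\ge L-1$, is precisely the right bookkeeping, and your choice $M=\max(1,L-1)$ correctly handles the degenerate case $L=1$ as well as the requirement $M\in\mathbb{N}$. Finally, your opening caveat is a genuine and worthwhile observation: read literally, the displayed condition is satisfied vacuously by $F=\emptyset$ for every subshift, so the theorem only makes sense under the reading $\Omega = X_F$, i.e.\ $F$ is a \emph{defining} set of forbidden words; this is how the paper uses the statement (via the phrase ``set of forbidden words'' immediately afterwards and in the proof of Proposition~\ref{prop:SFText->SFT}), and making it explicit is an improvement on the formulation as printed.
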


The set $F$ in Theorem~\ref{thm:equivalent-def-SFT} is referred to as the \textit{set of forbidden words}.  Further, a subshift $\Omega$ is called \textit{sofic} if and only if it is a factor of a SFT.

For $\nu = (\nu_{1}, \nu_{2}, \dots, \nu_{n}) \in \{ 0, 1\}^{n}$ and $\xi = (\xi_{1}, \xi_{2}, \dots, \xi_{m}) \in \{ 0, 1\}^{m}$, denote by $(\nu, \xi)$ the concatenation $(\nu_{1}, \nu_{2}, \dots, \nu_{n}, \xi_{1}, \xi_{2}, \dots, \xi_{m}) \in \{ 0, 1\}^{n+m}$, for $n, m \in \mathbb{N}$.  We use the same notation when $\xi \in \{ 0, 1\}^{\mathbb{N}}$.

An infinite sequence $\omega = (\omega_{1}, \omega_{2}, \dots) \in \{0, 1\}^{\mathbb{N}}$ is called \textit{periodic} if there exists an $n \in \mathbb{N}$ such that 
\begin{align}\label{eq:def_periodic}
(\omega_{1}, \omega_{2}, \dots, \omega_{n}) = (\omega_{(m - 1)n + 1}, \omega_{(m - 1)n + 2}, \dots, \omega_{m n}),
\end{align}
for all $m \in \mathbb{N}$.  We call the least such $n$ the period of $\omega$ and we write $\omega = (\overline{\omega_{1}, \omega_{2}, \dots, \omega_{n}})$.  Similarly, $\omega = (\omega_{1}, \omega_{2}, \dots) \in \{0, 1\}^{\mathbb{N}}$ is called \textit{eventually periodic} if there exists an $n, k \in \mathbb{N}$ such that
\begin{align*}
(\omega_{k+1}, \dots, \omega_{k+n}) = (\omega_{k+(m - 1)n + 1}, \omega_{k+(m - 1)n + 2}, \dots, \omega_{k+ m n}),
\end{align*}
for all $m \in \mathbb{N}$, and we write $\omega = (\omega_{1}, \omega_{2}, \dots, \omega_{k-1}, \omega_{k}, \overline{\omega_{k+1}, \dots, \omega_{k+n}})$.

\subsection{Intermediate $\beta$-shifts and expansions}

\noindent We now give the definition of the intermediate $\beta$-shift. Throughout this section we let $(\beta, \alpha) \in \Delta$ be fixed and let  $p$ denote the value  $p_{\beta,\alpha} = (1 - \alpha)/\beta$.

The \textit{$T^{+}_{\beta, \alpha}$-expansion} $\tau_{\beta, \alpha}^{+}(x)$ of $x \in [0, 1]$ and the \textit{$T^{-}_{\beta, \alpha}$-expansion} $\tau_{\beta, \alpha}^{-}(x)$ of $x\in [0, 1]$ are respectively defined to be $\omega^{+} = (\omega^{+}_{1}, \omega^{+}_{2}, \dots, ) \in \{ 0, 1\}^{\mathbb{N}}$ and $\omega^{-} = (\omega^{-}_{1}, \omega^{-}_{2}, \dots, ) \in \{ 0, 1\}^{\mathbb{N}}$, where
\begin{align*}
\omega^{-}_{n} \coloneqq \begin{cases}
0, & \quad \text{if } \, (T^{-}_{\beta,\alpha})^{n-1}(x) \leq p,\\
1, & \quad \text{otherwise,}
\end{cases}
\quad
\text{and}
\quad
\omega^{+}_{n} \coloneqq \begin{cases}
0, & \quad \text{if } (T^{+}_{\beta,\alpha})^{n-1}(x) < p,\\
1, & \quad \text{otherwise,}
\end{cases}
\end{align*}
for all $n \in \mathbb{N}$.  We will denote the images of the unit interval under $\tau_{\beta, \alpha}^{\pm}$ by $\Omega^{\pm}_{\beta, \alpha}$, respectively, and write $\Omega_{\beta, \alpha}$ for the union $\Omega_{\beta, \alpha}^{+} \cup \Omega_{\beta, \alpha}^{-}$.  The \textit{upper} and \textit{lower kneading invariants} of $T_{\beta,\alpha}^{\pm}$ are defined to be the infinite words $\tau^{\pm}_{\beta, \alpha}(p)$, respectively, and will turn out to be of great importance.

\begin{remark}\label{rem:00..11..}
Let $\omega^{\pm} = (\omega_{1}^{\pm}, \omega_{2}^{\pm}, \dots, )$ denote the infinite words $\tau_{\beta, \alpha}^{\pm}(p)$, respectively.  By definition, $\omega^{-}_{1} = \omega^{+}_{2} = 0$ and $\omega^{+}_{1} = \omega^{-}_{2} = 1$.  Furthermore, $(\omega_{k}^{+}, \omega_{k+1}^{+}, \dots, ) = (0, 0, 0, \dots )$ if and only if $k = 2$ and $\alpha = 0$; and $(\omega_{k}^{-}, \omega_{k+1}^{-}, \dots, ) = (1, 1, 1, \dots )$ if and only if $k = 2$ and $\alpha = 2-\beta$.
\end{remark}

The connection between the maps $T_{\beta, \alpha}^{\pm}$ and the $\beta$-expansions of real numbers is given through the $T_{\beta, \alpha}^{\pm}$-expansions of a point and the projection map $\pi_{\beta, \alpha} \colon \{ 0, 1 \}^{\mathbb{N}} \to [0, 1]$, which we will shortly define.  The projection map $\pi_{\beta, \alpha}$ is linked to the underlying (overlapping) iterated function system (IFS), namely $( [0, 1]; f_{0} \colon x \to x/\beta, f_{1} \colon x \to x/\beta + 1 - 1/\beta )$, where $\beta \in (1, 2)$.  (We refer the reader to \cite{F:1990} for the definition of and further details on IFSs.)  The projection map $\pi_{\beta, \alpha}$ is defined by
\begin{align*}
\pi_{\beta, \alpha}(\omega_{1}, \omega_{2}, \dots) \coloneqq (\beta - 1)^{-1} (\lim_{n \to \infty} f_{\omega_{1}} \circ \dots \circ f_{\omega_{n}}([ 0, 1]) - \alpha) =  \alpha(1 - \beta)^{-1} + \sum_{k = 1}^{\infty} \omega_{k} \beta^{-k}.
\end{align*}
An important property of the projection map is that the following diagram commutes.
\begin{align}\label{diag:commutative}
\begin{array}
[c]{ccc}
\Omega_{\beta, \alpha}^{\pm} & \overset{\sigma}{\longrightarrow} & \Omega_{\beta, \alpha}^{\pm}\\
& & \\
\pi_{\beta, \alpha} \downarrow\text{\ \ \ \ } &  & \text{ \ \ \ }\downarrow\pi_{\beta, \alpha} \\ & & \\
 \lbrack 0,1 \rbrack & \underset{T_{\beta, \alpha}^{\pm}} {\longrightarrow} & \lbrack 0,1\rbrack
\end{array}
\end{align}
(This result is readily verifiable from the definitions of the maps involved and a sketch of the proof of this result can be found in \cite[Section~5]{BHV:2011}.)  From this, we conclude that, for each $x \in [\alpha/(\beta - 1), 1 + \alpha/(\beta-1)]$, a $\beta$-expansion of $x$ is the infinite word $\tau^{\pm}_{\beta, \alpha}(x - \alpha (\beta - 1)^{-1})$.

\subsection{The extended model}

\noindent In our proof of Theorem \ref{thm:ESFTP}, we will consider the \textit{extended model} given as follows.  For $(\beta, \alpha) \in \Delta$, we let $p = p_{\beta,\alpha} = (1 - \alpha)/\beta$ and we define the transformation $\widetilde{T}_{\beta, \alpha}^{\pm} \colon [-\alpha/(\beta - 1), (1- \alpha)/(\beta-1)] \circlearrowleft$ by
\begin{align*}
\widetilde{T}_{\beta, \alpha}^{+}(x) = \begin{cases}
\beta x + \alpha & \quad\text{if} \; x < p,\\
\beta x + \alpha - 1 & \quad\text{otherwise,}\\
\end{cases}
\qquad \text{and} \qquad
\widetilde{T}_{\beta, \alpha}^{-}(x) = \begin{cases}
\beta x + \alpha & \quad\text{if} \; x \leq p,\\
\beta x + \alpha - 1 & \quad\text{otherwise}\\
\end{cases}
\end{align*}
(see Figure~\ref{Fig4}).  Note that $\widetilde{T}_{\beta, \alpha}^{\pm}\lvert_{[0, 1]} = T_{\beta, \alpha}^{\pm}$ and that, for all points $x$ chosen from the interval $(-\alpha/(\beta - 1), 0) \cup (1, (1- \alpha)/(1-\beta))$, there exists a minimal $n = n(x) \in \mathbb{N}$ such that we have $(\widetilde{T}_{\beta, \alpha}^{\pm})^{m}(x) \in [0, 1]$, for all $m \geq n$.
\begin{figure}[htbp]
\centering
\subfloat[Plot of $\widetilde{T}_{\beta,\alpha}^{+}$.]{
\hspace{1em}
\scalebox{0.45}{
\includegraphics{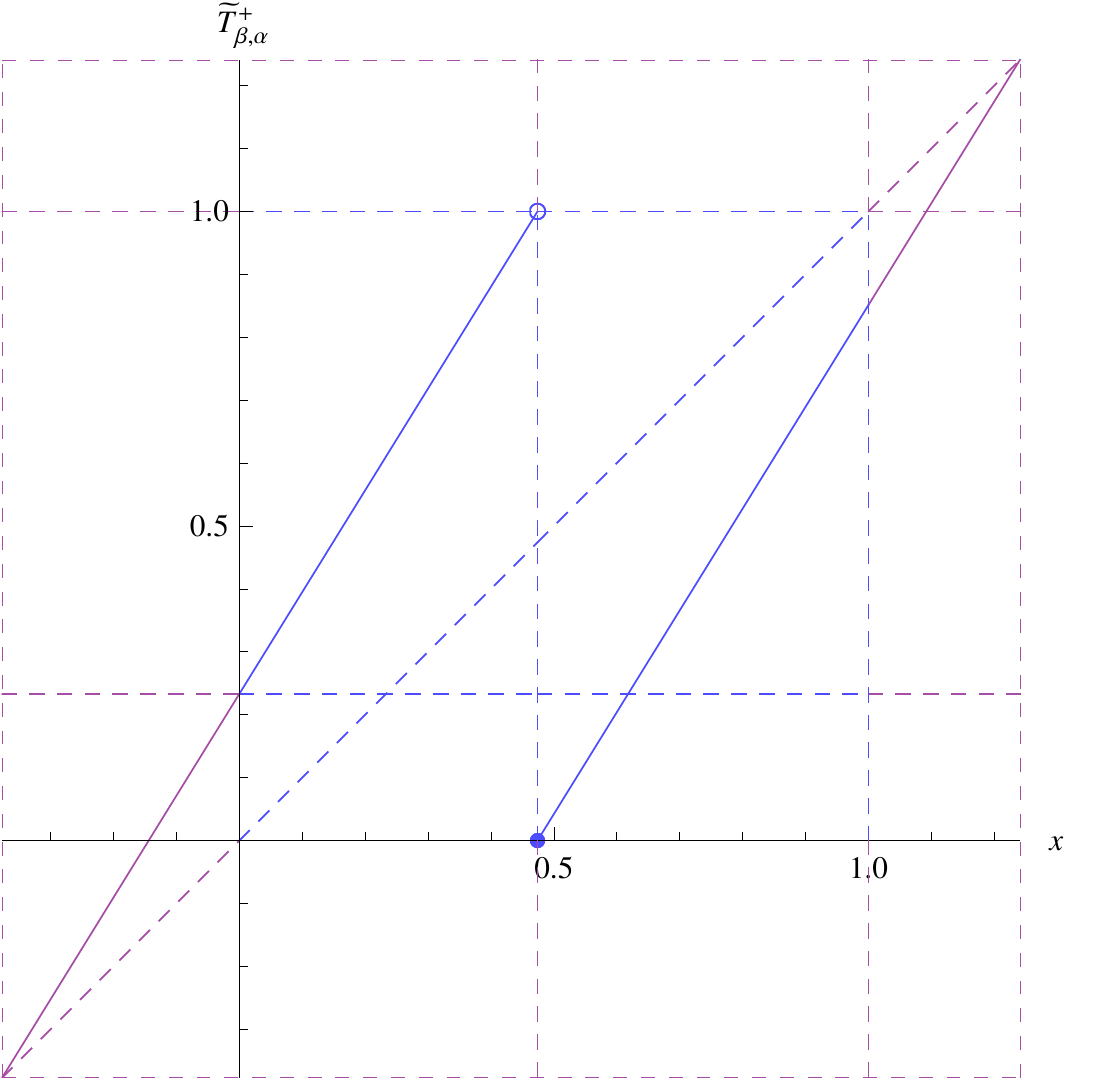}	
	}}
	\hspace{2em}
\subfloat[Plot of $\widetilde{T}_{\beta, \alpha}^{-}$.]{
\scalebox{0.45}{
\includegraphics{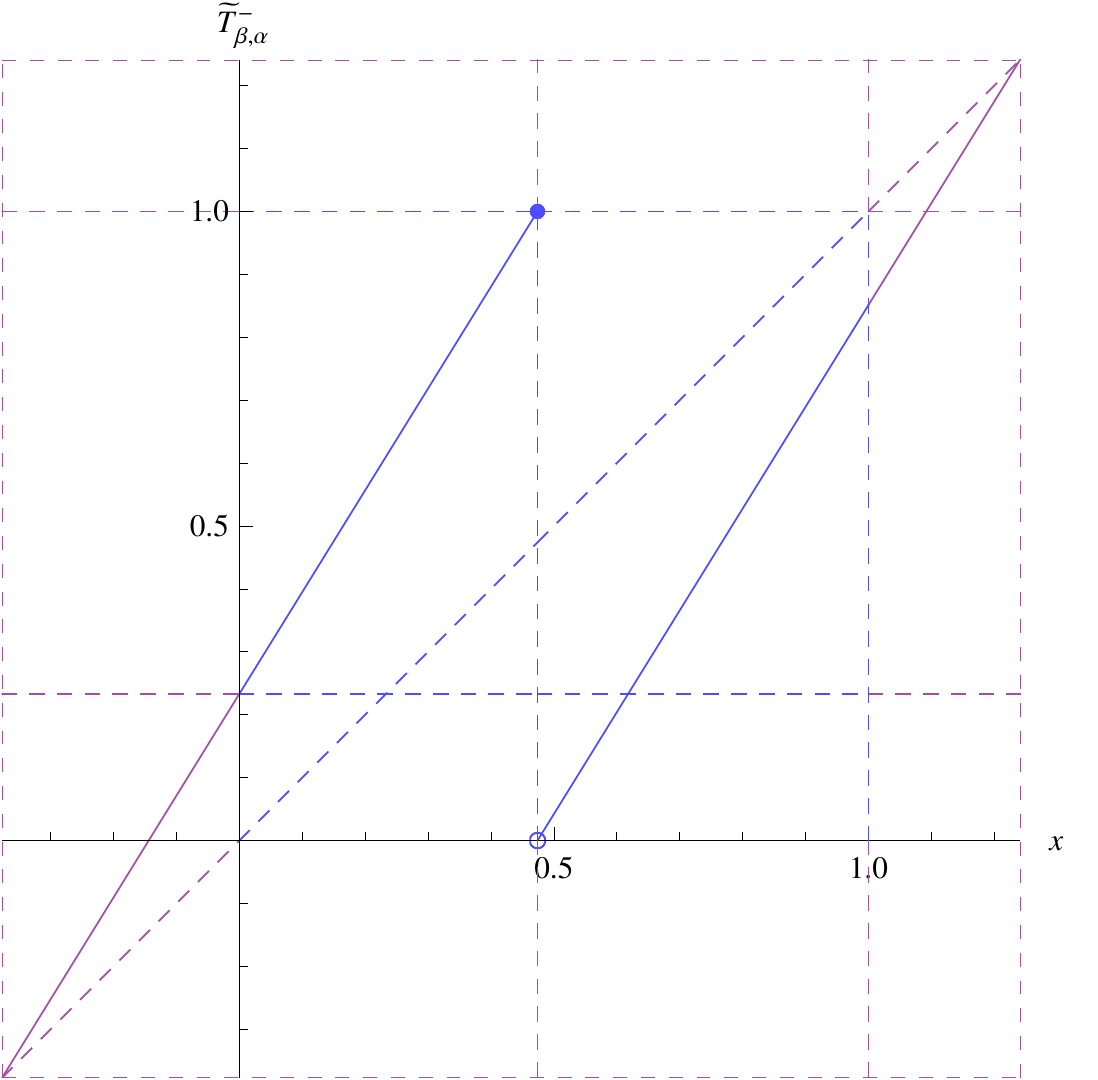}	
	}}
	\caption{Plot of $\widetilde{T}^{\pm}_{\beta, \alpha}$ for $\beta = (\sqrt{5} + 1)/2$ and $\alpha = 1 - 0.474 \beta$.  (The height of the filled in circle determines the value of the map at the point of discontinuity.)}
	\label{Fig4}
\end{figure}

The $\widetilde{T}_{\beta, \alpha}^{\pm}$-expansions $\widetilde{\tau}^{\pm}_{\beta, \alpha}(x)$ of points $x \in [-\alpha/(\beta - 1), (1- \alpha)/(\beta-1)]$ is defined in the same way as $\tau_{\beta, \alpha}^{\pm}$ except one replaces $T_{\beta, \alpha}^{\pm}$ by  $\widetilde{T}_{\beta, \alpha}^{\pm}$.  We let 
$$\widetilde{\Omega}_{\beta, \alpha}^{\pm} \coloneqq \widetilde{\tau}^{\pm}_{\beta, \alpha}([-\alpha/(\beta - 1), (1- \alpha)/(\beta-1)])$$ 
and we set $\widetilde{\Omega}_{\beta, \alpha} = \widetilde{\Omega}_{\beta, \alpha}^{+} \cup \widetilde{\Omega}_{\beta, \alpha}^{-}$.  The \textit{upper} and \textit{lower kneading invariant} of $\widetilde{T}_{\beta, \alpha}^{\pm}$ are defined to be the infinite words  $\widetilde{\tau}^{\pm}_{\beta, \alpha}(p)$, respectively.

\subsection{Structure of $\Omega_{\beta, \alpha}^{\pm}$ and $\widetilde{\Omega}_{\beta, \alpha}^{\pm}$}

\noindent The following theorem, on the structure of the spaces $\Omega_{\beta, \alpha}^{\pm}$ and $\widetilde{\Omega}_{\beta, \alpha}^{\pm}$, will play a crucial role in the proof of  Theorem~\ref{thm:ESFTP}.  A partial version of this result can be found in \cite[Lemma 1]{H:1979}.  To the best of our knowledge, this first appeared in \cite[Theorem 1]{HS:1990}, and later in \cite{AM:1996,BM:2011,BHV:2011,KS:2012}.  Moreover, a converse of Theorem~\ref{thm:Structure} holds true and was first given in \cite[Theorem 1]{HS:1990} and later in \cite[Theorem 1]{BV:2012}.

\begin{theorem}[{\cite{AM:1996,BM:2011,BV:2012,BHV:2011,H:1979,HS:1990,KS:2012}}]\label{thm:Structure}
For $(\beta, \alpha) \in \Delta$, the spaces $\Omega_{\beta, \alpha}^{\pm}$ and $\widetilde{\Omega}_{\beta, \alpha}^{\pm}$ are completely determined by upper and lower kneading invariants of $T_{\beta, \alpha}^{\pm}$ and $\widetilde{T}_{\beta, \alpha}^{\pm}$, respectively:
\begin{align*}
\Omega_{\beta, \alpha}^{+} &= \left\{ \omega \in \{ 0, 1\}^{\mathbb{N}} \colon \forall n \geq 0, \tau_{\beta, \alpha}^{+}(0) \preceq \sigma^{n}(\omega) \prec \tau_{\beta, \alpha}^{-}(p) \; \textup{or} \; \tau_{\beta, \alpha}^{+}(p) \preceq \sigma^{n}(\omega) \preceq \tau_{\beta, \alpha}^{-}(1) \right\}\\
\Omega_{\beta, \alpha}^{-} &= \left\{ \omega \in \{ 0, 1\}^{\mathbb{N}} \colon \forall \, n \geq 0, \tau_{\beta, \alpha}^{+}(0) \preceq  \sigma^{n}(\omega) \preceq \tau_{\beta, \alpha}^{-}(p) \; \textup{or} \; \tau_{\beta, \alpha}^{+}(p) \prec \sigma^{n}(\omega) \preceq \tau_{\beta, \alpha}^{-}(1) \right\}\\
\widetilde{\Omega}_{\beta, \alpha}^{+} &= \left\{ \omega \in \{ 0, 1\}^{\mathbb{N}} \colon \forall \, n \geq 0, \widetilde{\tau}_{\beta, \alpha}^{+}\left(\textstyle{\frac{-\alpha}{\beta - 1}}\right) \preceq \sigma^{n}(\omega) \prec \widetilde{\tau}_{\beta, \alpha}^{-}(p) \; \textup{or} \; \widetilde{\tau}_{\beta, \alpha}^{+}(p) \preceq \sigma^{n}(\omega) \preceq \widetilde{\tau}_{\beta, \alpha}^{+} \left(\textstyle{\frac{1- \alpha}{\beta-1}}\right) \right\}\\
\widetilde{\Omega}_{\beta, \alpha}^{-} &= \left\{ \omega \in \{ 0, 1\}^{\mathbb{N}} \colon \forall \, n \geq 0, \widetilde{\tau}_{\beta, \alpha}^{+}\left(\textstyle{\frac{-\alpha}{\beta - 1}}\right) \preceq \sigma^{n}(\omega) \preceq \widetilde{\tau}_{\beta, \alpha}^{-}(p) \; \textup{or} \; \widetilde{\tau}_{\beta, \alpha}^{+}(p) \prec \sigma^{n}(\omega) \preceq \widetilde{\tau}_{\beta, \alpha}^{+}\left(\textstyle{\frac{1- \alpha}{\beta-1}}\right) \right\}.
\end{align*}
(Here the symbols $\prec$, $\preceq$, $\succ$ and $\succeq$ denote the lexicographic orderings on $\{ 0 ,1\}^{\mathbb{N}}$.)  Moreover,
\begin{align}\label{eq:closure-union}
\Omega_{\beta, \alpha} = \Omega_{\beta, \alpha}^{+} \cup \Omega_{\beta, \alpha}^{-} \quad \text{and} \quad \widetilde{\Omega}_{\beta, \alpha} = \widetilde{\Omega}_{\beta, \alpha}^{+} \cup \widetilde{\Omega}_{\beta, \alpha}^{-}.
\end{align}
\end{theorem}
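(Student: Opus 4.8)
The plan is to prove the four descriptions by the standard kneading-theoretic argument, treating the extended model $\widetilde{T}_{\beta,\alpha}^{\pm}$ on exactly the same footing as $T_{\beta,\alpha}^{\pm}$: passing to $\widetilde{T}_{\beta,\alpha}^{\pm}$ only replaces the endpoints $0,1$ by $-\alpha/(\beta-1)$ and $(1-\alpha)/(\beta-1)$, so I would carry out the argument in full for $T_{\beta,\alpha}^{+}$ and then indicate the trivial modifications for the remaining three cases. The two engines of the whole proof are (i) that the coding maps $\tau_{\beta,\alpha}^{\pm}$ are monotone for the lexicographic order, and (ii) the commutation relation $\sigma\circ\tau_{\beta,\alpha}^{\pm}=\tau_{\beta,\alpha}^{\pm}\circ T_{\beta,\alpha}^{\pm}$, which is immediate from the definition of the coding and underlies diagram~\eqref{diag:commutative}.

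First I would prove the monotonicity lemma: if $x<y$ in $[0,1]$ then $\tau_{\beta,\alpha}^{+}(x)\preceq\tau_{\beta,\alpha}^{+}(y)$, and likewise for $\tau_{\beta,\alpha}^{-}$. The key point is that each of the two branches of $T_{\beta,\alpha}^{+}$ has slope $\beta>0$, so order is preserved as long as two orbits remain on the same side of $p$. Letting $n$ be the first coordinate at which the two codes differ, the orbits of $x$ and $y$ agree in side for the first $n-1$ steps, whence $(T_{\beta,\alpha}^{+})^{n-1}(x)<(T_{\beta,\alpha}^{+})^{n-1}(y)$ forces these two iterates to straddle $p$ with $x$'s iterate on the $0$-side, so the codes first differ by a $0$ against a $1$ and $\tau_{\beta,\alpha}^{+}(x)\prec\tau_{\beta,\alpha}^{+}(y)$. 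For the forward inclusion, given $\omega=\tau_{\beta,\alpha}^{+}(x)$ and $n\geq 0$, commutation gives $\sigma^{n}(\omega)=\tau_{\beta,\alpha}^{+}(y)$ with $y=(T_{\beta,\alpha}^{+})^{n}(x)\in[0,1]$. If $\omega_{n+1}=0$ then $y\in[0,p)$; monotonicity gives $\tau_{\beta,\alpha}^{+}(0)\preceq\tau_{\beta,\alpha}^{+}(y)$, while the upper bound $\tau_{\beta,\alpha}^{+}(y)\prec\tau_{\beta,\alpha}^{-}(p)$ comes from identifying $\tau_{\beta,\alpha}^{-}(p)$ as the supremum of the codes over the left branch $[0,p)$, the strictness and the appearance of the $-$-kneading invariant being dictated by the convention $\omega_{k}^{-}=0\iff(T_{\beta,\alpha}^{-})^{k-1}\leq p$. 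Symmetrically, if $\omega_{n+1}=1$ then $y\in[p,1]$ and monotonicity together with $\tau_{\beta,\alpha}^{+}(p)$ being the infimum of the codes over the right branch yields $\tau_{\beta,\alpha}^{+}(p)\preceq\tau_{\beta,\alpha}^{+}(y)\preceq\tau_{\beta,\alpha}^{-}(1)$. This gives $\Omega_{\beta,\alpha}^{+}\subseteq$ the claimed set, and the analogous bookkeeping, swapping which inequalities are strict according to the $\leq p$ versus $<p$ conventions, handles $\Omega_{\beta,\alpha}^{-}$.

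For the reverse inclusions I would use the projection $\pi_{\beta,\alpha}$. Given $\omega$ in the right-hand set, I set $x\coloneqq\pi_{\beta,\alpha}(\omega)$ and prove $\tau_{\beta,\alpha}^{+}(x)=\omega$ by showing inductively, via commutation and the monotonicity of $\pi_{\beta,\alpha}$, that the prescribed inequalities on $\sigma^{n}(\omega)$ force $(T_{\beta,\alpha}^{+})^{n}(x)<p$ exactly when $\omega_{n+1}=0$ and $(T_{\beta,\alpha}^{+})^{n}(x)\geq p$ exactly when $\omega_{n+1}=1$; since these are precisely the alternatives defining the coding, the code of $x$ must be $\omega$. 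The extended statements are proved verbatim with $\widetilde{T}_{\beta,\alpha}^{\pm}$, using that its two branches are again increasing with slope $\beta$ and that the endpoints of the extended interval play the roles of $0$ and $1$. Finally, the identities in \eqref{eq:closure-union} are the definitions of $\Omega_{\beta,\alpha}$ and $\widetilde{\Omega}_{\beta,\alpha}$; what requires a word is that each union is already closed, so is a genuine subshift, which follows because the only failure of closedness in each individual description occurs at the strict inequality, and those limiting sequences are supplied by the complementary ($+$ versus $-$) piece.

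The step I expect to be the main obstacle is pinning down the strict-versus-non-strict inequalities and the correct kneading invariant ($\tau_{\beta,\alpha}^{+}(p)$ versus $\tau_{\beta,\alpha}^{-}(p)$) in each of the four clauses. This amounts to carefully tracking, through the identification of the branch suprema and infima, how the two coding conventions $\omega_{n}^{+}=0\iff(T_{\beta,\alpha}^{+})^{n-1}<p$ and $\omega_{n}^{-}=0\iff(T_{\beta,\alpha}^{-})^{n-1}\leq p$ interact with orbits that eventually land exactly on the discontinuity $p$; these borderline orbits are precisely the sequences at which the two descriptions differ and at which the closedness of the union must be checked.
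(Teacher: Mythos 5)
You should first note that the paper itself does not prove Theorem~\ref{thm:Structure}: it is quoted from the literature (Hofbauer, Hubbard--Sparrow, Barnsley--Steiner--Vince, Kalle--Steiner), so the only comparison available is with the cited sources, and your outline does follow their standard kneading-theoretic route. The forward inclusions are essentially fine as you describe them: the conjugacy $\sigma\circ\tau^{\pm}_{\beta,\alpha}=\tau^{\pm}_{\beta,\alpha}\circ T^{\pm}_{\beta,\alpha}$ reduces everything to locating $\tau^{\pm}_{\beta,\alpha}(y)$ for $y$ in one branch, and the comparison you actually need is the two-sided one, $x<y\Rightarrow\tau^{+}_{\beta,\alpha}(x)\preceq\tau^{-}_{\beta,\alpha}(y)$ (proved by your straddling argument applied to the two different codings simultaneously), with strictness supplied by $\pi_{\beta,\alpha}\circ\tau^{\pm}_{\beta,\alpha}=\mathrm{id}$. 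Your accounting of which inequality is strict in which clause, and the observation that the union $\Omega^{+}_{\beta,\alpha}\cup\Omega^{-}_{\beta,\alpha}$ is closed because the boundary sequences lost at the strict inequalities of one piece are recovered by the other, are also consistent with the cited statements.

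The genuine gap is in the reverse inclusions, where you invoke ``the monotonicity of $\pi_{\beta,\alpha}$''. For $\beta\in(1,2)$ the underlying IFS overlaps and $\pi_{\beta,\alpha}$ is \emph{not} order-preserving from $(\{0,1\}^{\mathbb{N}},\preceq)$ to $\mathbb{R}$: for example $(1,\overline{0})\succ(0,\overline{1})$, yet
\begin{equation*}
\pi_{\beta,\alpha}(1,\overline{0})=\frac{\alpha}{1-\beta}+\frac{1}{\beta}<\frac{\alpha}{1-\beta}+\frac{1}{\beta(\beta-1)}=\pi_{\beta,\alpha}(0,\overline{1})
\end{equation*}
precisely because $\beta<2$. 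So one cannot conclude $\pi_{\beta,\alpha}(\sigma^{n}(\omega))<p$ from $\sigma^{n}(\omega)\prec\tau^{-}_{\beta,\alpha}(p)$ by monotonicity; this implication is exactly the nontrivial content of the converse half. The correct argument runs through the recursion $\pi_{\beta,\alpha}(\sigma(\nu))=\beta\,\pi_{\beta,\alpha}(\nu)+\alpha-\nu_{1}$: one first shows that the hypotheses imposed on \emph{all} shifts of $\omega$ force $\pi_{\beta,\alpha}(\sigma^{n}(\omega))$ to lie in $[0,1]$ (respectively in the extended interval $[-\alpha/(\beta-1),(1-\alpha)/(\beta-1)]$ for $\widetilde{\Omega}^{\pm}_{\beta,\alpha}$, which is why the extended model is the natural setting) for every $n$, and only then does the first letter of $\sigma^{n}(\omega)$, via the recursion, pin $\pi_{\beta,\alpha}(\sigma^{n}(\omega))$ to the correct side of $p$, the strict inequalities in the hypotheses serving to exclude the ambiguous value $p$ itself. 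This inductive use of admissibility of the whole orbit of $\omega$, rather than of the single word $\sigma^{n}(\omega)$, is the missing idea; as written, your reduction rests on a false lemma.
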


\begin{remark}\label{rmk:tildeVnon-tilde}
By the commutativity of the diagram in (\ref{diag:commutative}), we have that $\tau_{\beta, \alpha}^{+}(0) = \sigma(\tau_{\beta, \alpha}^{+}(p))$ and $\tau_{\beta, \alpha}^{-}(1) = \sigma(\tau_{\beta, \alpha}^{-}(p))$.  Additionally, from the definition of the $\widetilde{T}^{\pm}_{\beta,\alpha}$-expansion of a point
\begin{align*}
\widetilde{\tau}_{\beta, \alpha}^{+}\left(\textstyle{\frac{-\alpha}{\beta - 1}}\right) = \widetilde{\tau}_{\beta, \alpha}^{-}\left(\textstyle{\frac{-\alpha}{\beta - 1}}\right) = (\overline{0})
\quad
\text{and}
\quad
\widetilde{\tau}_{\beta, \alpha}^{+}\left(\textstyle{\frac{1- \alpha}{\beta-1}}\right) = \widetilde{\tau}_{\beta, \alpha}^{-}\left(\textstyle{\frac{1- \alpha}{\beta-1}}\right) = (\overline{1}).
\end{align*}
Further, since  $\widetilde{T}_{\beta, \alpha}^{\pm}\lvert_{[0, 1]} = T_{\beta, \alpha}^{\pm}$, it follows that $\tau_{\beta, \alpha}^{\pm}(p) = \widetilde{\tau}_{\beta, \alpha}^{\pm}(p)$ and thus, from now on we will write $\tau_{\beta, \alpha}^{+}(p)$ for the common value $\tau_{\beta, \alpha}^{+}(p) = \widetilde{\tau}_{\beta, \alpha}^{+}(p)$ and $\tau_{\beta, \alpha}^{-}(p)$ for the common value $\tau_{\beta, \alpha}^{-}(p) = \widetilde{\tau}_{\beta, \alpha}^{-}(p)$.
\end{remark}

\begin{corollary}\label{cor:shift_invarance}
For $(\beta, \alpha) \in \Delta$, we have that $\Omega_{\beta, \alpha}^{\pm}$ and $\widetilde{\Omega}_{\beta, \alpha}^{\pm}$ are shift invariant subspaces and that $\Omega_{\beta, \alpha}$ and $\widetilde{\Omega}_{\beta, \alpha}$ are subshifts.  Moreover,
\begin{align*}
\sigma(\Omega_{\beta, \alpha}\lvert^{*}) \subseteq \Omega_{\beta, \alpha}\lvert^{*},
\quad
\sigma(\widetilde{\Omega}_{\beta, \alpha}\lvert^{*}) \subseteq \widetilde{\Omega}_{\beta, \alpha}\lvert^{*},
\quad
\sigma(\Omega_{\beta, \alpha}) = \Omega_{\beta, \alpha}
\quad \text{and} \quad
\sigma(\widetilde{\Omega}_{\beta, \alpha}) = \widetilde{\Omega}_{\beta, \alpha}.
\end{align*}
\end{corollary}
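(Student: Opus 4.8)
The plan is to read every assertion off the lexicographic description of $\Omega_{\beta, \alpha}^{\pm}$ and $\widetilde{\Omega}_{\beta, \alpha}^{\pm}$ furnished by Theorem~\ref{thm:Structure}, together with the elementary order and orbit relations recorded in Remarks~\ref{rem:00..11..} and \ref{rmk:tildeVnon-tilde}. I treat the assertions in the order: shift invariance of the four sets $\Omega_{\beta, \alpha}^{\pm}, \widetilde{\Omega}_{\beta, \alpha}^{\pm}$; closedness of the two unions; the inclusion $\sigma(\Omega_{\beta, \alpha}\lvert^{*}) \subseteq \Omega_{\beta, \alpha}\lvert^{*}$; and the surjectivity $\sigma(\Omega_{\beta, \alpha}) = \Omega_{\beta, \alpha}$. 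The extended-model statements follow by identical arguments, so I indicate only the minor differences. Shift invariance is immediate: each of the four sets in Theorem~\ref{thm:Structure} has the form $\{\omega : \forall n \geq 0,\ \sigma^{n}(\omega) \in A\}$ for a fixed $A \subseteq \{0,1\}^{\mathbb{N}}$ (a union of two lexicographic windows), and if $\omega$ satisfies this then so does $\sigma(\omega)$, since $\sigma^{n}(\sigma(\omega)) = \sigma^{n+1}(\omega) \in A$ for all $n \geq 0$. Hence $\sigma(\Omega_{\beta, \alpha}^{\pm}) \subseteq \Omega_{\beta, \alpha}^{\pm}$ and likewise for the tilde spaces, so all four are shift invariant subspaces; taking unions and using \eqref{eq:closure-union} shows $\Omega_{\beta, \alpha}$ and $\widetilde{\Omega}_{\beta, \alpha}$ are shift invariant too.

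For the subshift property it remains to prove that $\Omega_{\beta, \alpha} = \Omega_{\beta, \alpha}^{+} \cup \Omega_{\beta, \alpha}^{-}$ is closed (and similarly for the tilde version). I would identify this union with the set $K$ of all $\omega$ satisfying, for every $n \geq 0$, that $\tau^{+}_{\beta, \alpha}(0) \preceq \sigma^{n}(\omega) \preceq \tau^{-}_{\beta, \alpha}(p)$ or $\tau^{+}_{\beta, \alpha}(p) \preceq \sigma^{n}(\omega) \preceq \tau^{-}_{\beta, \alpha}(1)$, i.e. the same two windows as in Theorem~\ref{thm:Structure} but with every inequality non-strict. The set $K$ is closed, because a lexicographic half-space $\{\nu \succeq a\}$ or $\{\nu \preceq a\}$ is closed in the metric $d$, so each window is closed, each condition on $\sigma^{n}(\omega)$ is the preimage of a closed set under the continuous map $\sigma^{n}$, and $K$ is their intersection over all $n$. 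The inclusion $\Omega_{\beta, \alpha} \subseteq K$ is immediate. The reverse inclusion is the heart of the matter: comparing the half-open windows defining $\Omega^{+}_{\beta, \alpha}$ and $\Omega^{-}_{\beta, \alpha}$ with the closed windows defining $K$, an $\omega \in K$ fails to lie in $\Omega^{+}_{\beta, \alpha}$ only if some shift equals $\tau^{-}_{\beta, \alpha}(p)$, and fails to lie in $\Omega^{-}_{\beta, \alpha}$ only if some shift equals $\tau^{+}_{\beta, \alpha}(p)$. I would then rule out that both occur for a single $\omega$: from $\omega^{-}_{1} = 0$ and $\omega^{+}_{1} = 1$ in Remark~\ref{rem:00..11..} one gets $\tau^{-}_{\beta, \alpha}(p) \prec \tau^{+}_{\beta, \alpha}(p)$, whence (by strictness in the windows) $\tau^{+}_{\beta, \alpha}(p) \notin \Omega^{-}_{\beta, \alpha}$ and $\tau^{-}_{\beta, \alpha}(p) \notin \Omega^{+}_{\beta, \alpha}$; since $\tau^{\pm}_{\beta, \alpha}(p) \in \Omega^{\pm}_{\beta, \alpha}$ and these sets are shift invariant, a shift equal to $\tau^{-}_{\beta, \alpha}(p)$ forces all later shifts into $\Omega^{-}_{\beta, \alpha}$ and, symmetrically, a shift equal to $\tau^{+}_{\beta, \alpha}(p)$ forces all later shifts into $\Omega^{+}_{\beta, \alpha}$, so the two boundary sequences cannot both appear among the shifts of $\omega$. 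Hence every $\omega \in K$ avoids one of them at every time, giving $\omega \in \Omega^{+}_{\beta, \alpha} \cup \Omega^{-}_{\beta, \alpha}$ and thus $K = \Omega_{\beta, \alpha}$. \textbf{This identification of the union with the closed set $K$ is the main step}, being the only point where the strictness in Theorem~\ref{thm:Structure}, and the resulting interlocking of $\Omega^{+}_{\beta, \alpha}$ and $\Omega^{-}_{\beta, \alpha}$, is genuinely used.

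The two remaining assertions are comparatively soft. For $\sigma(\Omega_{\beta, \alpha}\lvert^{*}) \subseteq \Omega_{\beta, \alpha}\lvert^{*}$, take $w = (w_{1}, \dots, w_{n}) \in \Omega_{\beta, \alpha}\lvert_{n}$ with $n \geq 2$; by definition $w$ is a length-$n$ prefix of some $\xi \in \Omega_{\beta, \alpha}$, so $(w_{2}, \dots, w_{n})$ is a length-$(n-1)$ prefix of $\sigma(\xi) \in \Omega_{\beta, \alpha}$ by the shift invariance just proved, whence $\sigma(w) \in \Omega_{\beta, \alpha}\lvert_{n-1} \subseteq \Omega_{\beta, \alpha}\lvert^{*}$ (length-one words map to $\emptyset$, included by convention). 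Finally, for $\sigma(\Omega_{\beta, \alpha}) = \Omega_{\beta, \alpha}$ only $\Omega_{\beta, \alpha} \subseteq \sigma(\Omega_{\beta, \alpha})$ needs proof, the reverse being shift invariance. Here I would use the commuting diagram \eqref{diag:commutative}: given $\omega = \tau^{\epsilon}_{\beta, \alpha}(x) \in \Omega^{\epsilon}_{\beta, \alpha}$ and any $y \in [0,1]$ with $T^{\epsilon}_{\beta, \alpha}(y) = x$, the diagram yields $\sigma(\tau^{\epsilon}_{\beta, \alpha}(y)) = \tau^{\epsilon}_{\beta, \alpha}(x) = \omega$ with $\tau^{\epsilon}_{\beta, \alpha}(y) \in \Omega_{\beta, \alpha}$. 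Such a $y$ exists because $T^{+}_{\beta, \alpha}$ and $T^{-}_{\beta, \alpha}$ are jointly surjective onto $[0,1]$ (each branch is onto a subinterval and the two cover $[0,1]$), the two boundary values not attained by a single branch being handled via $\tau^{+}_{\beta, \alpha}(0) = \sigma(\tau^{+}_{\beta, \alpha}(p))$ and $\tau^{-}_{\beta, \alpha}(1) = \sigma(\tau^{-}_{\beta, \alpha}(p))$ from Remark~\ref{rmk:tildeVnon-tilde}. For the extended model this last point is cleaner, since each $\widetilde{T}^{\pm}_{\beta, \alpha}$ is itself surjective onto $[-\alpha/(\beta-1), (1-\alpha)/(\beta-1)]$, so every $\omega \in \widetilde{\Omega}_{\beta, \alpha}$ has a $\sigma$-preimage in $\widetilde{\Omega}_{\beta, \alpha}$ directly from the analogous commuting diagram.
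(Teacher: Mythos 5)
Your write-up follows the same route as the paper, whose proof of this corollary is a single sentence citing Theorem~\ref{thm:Structure} and the diagram \eqref{diag:commutative}; you have simply supplied the details. The shift-invariance step, the inclusion $\sigma(\Omega_{\beta,\alpha}\lvert^{*}) \subseteq \Omega_{\beta,\alpha}\lvert^{*}$, and above all the closedness argument are correct: identifying $\Omega_{\beta,\alpha}^{+} \cup \Omega_{\beta,\alpha}^{-}$ with the set $K$ cut out by non-strict inequalities, and ruling out that the shifts of a single $\omega$ can realise both boundary sequences $\tau^{-}_{\beta,\alpha}(p)$ and $\tau^{+}_{\beta,\alpha}(p)$, is exactly the point that needs an argument, and yours is sound.

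The one spot that needs patching is the surjectivity step $\Omega_{\beta,\alpha} \subseteq \sigma(\Omega_{\beta,\alpha})$. The two values not attained are $1 \notin T^{+}_{\beta,\alpha}([0,1])$ (when $\alpha < 2-\beta$) and $0 \notin T^{-}_{\beta,\alpha}([0,1])$ (when $\alpha > 0$), so the elements your branch-inversion argument misses are $\tau^{+}_{\beta,\alpha}(1)$ and $\tau^{-}_{\beta,\alpha}(0)$; the identities you invoke, $\tau^{+}_{\beta,\alpha}(0) = \sigma(\tau^{+}_{\beta,\alpha}(p))$ and $\tau^{-}_{\beta,\alpha}(1) = \sigma(\tau^{-}_{\beta,\alpha}(p))$, produce $\sigma$-preimages of $\tau^{+}_{\beta,\alpha}(0)$ and $\tau^{-}_{\beta,\alpha}(1)$, which are \emph{not} the missing elements ($0 = T^{+}_{\beta,\alpha}(p)$ and $1 = T^{-}_{\beta,\alpha}(p)$ are already attained by the respective branches). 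To close this, note that $\tau^{-}_{\beta,\alpha}(x) \preceq \tau^{+}_{\beta,\alpha}(x)$ for every $x$ (the two expansions agree until the orbit meets $p$, where $T^{+}$ assigns the digit $1$ and $T^{-}$ the digit $0$), while Theorem~\ref{thm:Structure} forces $\tau^{+}_{\beta,\alpha}(1) \preceq \tau^{-}_{\beta,\alpha}(1)$ and $\tau^{-}_{\beta,\alpha}(0) \succeq \tau^{+}_{\beta,\alpha}(0)$; hence $\tau^{+}_{\beta,\alpha}(1) = \tau^{-}_{\beta,\alpha}(1) = \sigma(\tau^{-}_{\beta,\alpha}(p))$ and $\tau^{-}_{\beta,\alpha}(0) = \tau^{+}_{\beta,\alpha}(0) = \sigma(\tau^{+}_{\beta,\alpha}(p))$, which supplies the two missing preimages. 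With that one correction the proof is complete.
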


\begin{proof}
This is a direct consequence of Theorem~\ref{thm:Structure} and the commutativity of the diagram given in (\ref{diag:commutative}).
\end{proof}

As one can see the code space structure of $\widetilde{\Omega}_{\beta, \alpha}^{\pm}$ is simpler than the code space structure of $\Omega_{\beta, \alpha}^{\pm}$.  In fact, to prove Theorem~\ref{thm:ESFTP}, we will show that it is necessary and sufficient to show that $\widetilde{\Omega}_{\beta, \alpha}$ is a SFT.

\subsection{Periodicity and zero/one-full words}

\noindent We now give a sufficient condition for the kneading invariants of $\widetilde{T}^{\pm}_{\beta, \alpha}$ (and hence, by Remark~\ref{rmk:tildeVnon-tilde}, of $T^{\pm}_{\beta, \alpha}$) to be periodic.  For this, we will require the following notations and definitions.  Indeed by considering zero and one-fullness (Definition~\ref{defn:fullness}) of the infinite words $\sigma ( \tau_{\beta, \alpha}^{\pm}(p))$ we obtain a close link to the definition of SFT (see Lemma~\ref{lem:keylemma} and Proposition~\ref{prop:ESFTP}\ref{propA:1}).

For a given subset $E$ of $\mathbb{N}$, we write $\lvert E \rvert$ for the cardinality of $E$.

\begin{definition}\label{defn:fullness}
Let $(\beta, \alpha) \in \Delta$.  An infinite word $\omega = (\omega_{1}, \omega_{2}, \dots) \in \{ 0, 1\}^{\mathbb{N}}$ is called
\begin{enumerate}[label*=(\alph*)]
\item \textit{zero-full} with respect to $(\beta, \alpha)$ if
\begin{align*}
N_{0}(\omega) \coloneqq \left\lvert \left\{ k > 2 \colon (\omega_{1}, \omega_{2}, \dots, \omega_{k-1}, 1) \in \widetilde{\Omega}^{-}_{\beta, \alpha}\lvert_{k} \; \text{and} \; \omega_{k} = 0 \right\} \right\rvert,
\end{align*}
is not finite, and 
\item \textit{one-full} with respect to $(\beta, \alpha)$ if
\begin{align*}
N_{1}(\omega) \coloneqq \left\lvert \left\{ k > 2 \colon (\omega_{1}, \omega_{2}, \dots, \omega_{k-1}, 0) \in \widetilde{\Omega}^{+}_{\beta, \alpha}\lvert_{k} \; \text{and} \; \omega_{k} = 1 \right\} \right\rvert,
\end{align*}
is not finite. 
\end{enumerate}
\end{definition}

\begin{remark}
The concept of an infinite word being zero-full is motivated from the fact that a cylinder set $\mathrm{I}(\omega_1,\dots,\omega_{n-1},0) \subset [0, 1)$ corresponding to the finite word $(\omega_1,\dots,\omega_{n-1},0)$ is a full interval if  $(\omega_1,\dots,\omega_{n-1},1)$ is admissible for the greedy $\beta$-expansion with $1 < \beta < 2$ (see for instance \cite[Lemma 3.2 (1)]{FW:2012}).  Here we recall from \cite{FW:2012} that
\begin{enumerate}
\item the cylinder set $\mathrm{I}(\omega_1,\dots,\omega_{n-1},0) \subset [0, 1)$ is defined to be the half open interval of $[0, 1]$ such that for any $x \in \mathrm{I}(\omega_1,\dots,\omega_{n-1},0)$ we have $\tau^{-}_{\beta, 0}(x)\vert_{n} = (\omega_1,\dots,\omega_{n-1},0)$,
\item \textit{admissible for the greedy $\beta$-expansion} means that $\sigma^{k}(\omega_1,\dots,\omega_{n-1},1) \preceq \tau_{\beta, 0}^{-}(1)\vert_{n-1-k}$, for all $k \in \{ 0, 1, \dots, n-2 \}$, and
\item \textit{full} means that the length of the cylinder set $\mathrm{I}(\omega_1,\dots,\omega_{n-1},0)$ is equal to $\beta^{-n}$, which is equivalent to $(T_{\beta, 0}^{-})^{n}(\mathrm{I}(\omega_1,\dots,\omega_{n-1},0)) = [0, 1)$.
\end{enumerate}
The notion of an infinite word being one-full is motivated as above except from the prospective of the lazy $\beta$-expansion instead of the greedy $\beta$-expansion.
\end{remark}

Before stating our next result we remark that $p_{\beta, \alpha} = 1 - 1/\beta$ if and only if $\alpha = 2-\beta$ and $p_{\beta, \alpha} = 1/\beta$ if and only if $\alpha = 0$.

\begin{lemma}\label{lem:keylemma}
Let $(\beta, \alpha) \in \Delta$ and set $p = p_{\beta, \alpha}$.
\begin{enumerate}
\item\label{keylemma1} If $\tau_{\beta, \alpha}^{+}(p)$ is not periodic and $\alpha \neq 2 - \beta$, then $\sigma ( \tau_{\beta, \alpha}^{+}(p))$ is one-full. 
\item\label{keylemma2} If $\tau_{\beta, \alpha}^{-}(p)$ is not periodic and $\alpha \neq 0$, then $\sigma ( \tau_{\beta, \alpha}^{-}(p))$ is zero-full.
\end{enumerate}
\end{lemma}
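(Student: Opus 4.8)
The plan is to prove part \ref{keylemma1}; part \ref{keylemma2} then follows from the reflection symmetry $T^{-}_{\beta,\alpha}(x) = 1 - T_{\beta, 2-\beta-\alpha}^{+}(1-x)$, under which $0$ and $1$ are interchanged, $\widetilde{\Omega}_{\beta, \alpha}^{+}$ and $\widetilde{\Omega}_{\beta, \alpha}^{-}$ are swapped, one-fullness becomes zero-fullness, and the condition $\alpha \neq 2-\beta$ for parameter $2-\beta-\alpha$ turns into $\alpha \neq 0$. Write $\omega^{+} = \tau_{\beta, \alpha}^{+}(p)$ and $\mu = \sigma(\omega^{+})$; by the commutativity of \eqref{diag:commutative} and $T_{\beta,\alpha}^{+}(p) = 0$ we have $\mu = \tau_{\beta, \alpha}^{+}(0) = \widetilde{\tau}_{\beta, \alpha}^{+}(0)$, so $\mu_{j} = \omega^{+}_{j+1}$. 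Let $x_{j} = (\widetilde{T}_{\beta, \alpha}^{+})^{j}(0)$ be the orbit of $0$, so that $\mu_{j+1} = 1$ exactly when $x_{j} \geq p$, and let $C_{k} \subseteq [-\alpha/(\beta-1), (1-\alpha)/(\beta-1)]$ be the interval of points whose $\widetilde{\tau}_{\beta, \alpha}^{+}$-expansion begins with $(\mu_{1}, \dots, \mu_{k})$; note $0 \in C_{k}$. Since $(\widetilde{T}_{\beta, \alpha}^{+})^{k}$ is affine and increasing on $C_{k}$, it carries $C_{k}$ onto an interval $J_{k}$ with left endpoint $\ell_{k} \coloneqq \inf J_{k}$. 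The first key step is the dictionary, read off from Theorem~\ref{thm:Structure} together with \eqref{diag:commutative}, that $(\mu_{1}, \dots, \mu_{k-1}, 0) \in \widetilde{\Omega}_{\beta, \alpha}^{+}\lvert_{k}$ if and only if some point of $C_{k-1}$ has $k$-th symbol $0$, that is, if and only if $\ell_{k-1} < p$. Hence $N_{1}(\mu)$ counts precisely those $k$ with $\ell_{k-1} < p$ and $x_{k-1} \geq p$.

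Next I would track $\ell_{k}$. A direct computation with the two affine branches of $\widetilde{T}_{\beta, \alpha}^{+}$, using $\widetilde{T}_{\beta, \alpha}^{+}(p) = 0$ and $\ell_{k-1} \leq x_{k-1}$, shows that $\ell_{k} = \widetilde{T}_{\beta, \alpha}^{+}(\ell_{k-1})$ in every case \emph{except} when $x_{k-1} \geq p$ and $\ell_{k-1} < p$, in which case $\ell_{k} = 0$. Thus $(\ell_{k})$ follows a genuine $\widetilde{T}_{\beta, \alpha}^{+}$-orbit away from the positions counted by $N_{1}$, and at each non-counted position the side of $p$ on which $\ell_{k-1}$ lies agrees with $\mu_{k}$ (when $\mu_{k}=0$ this is forced by $\ell_{k-1} \leq x_{k-1} < p$, and when $\mu_{k} = 1$ the non-counted case is exactly $\ell_{k-1} \geq p$). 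I would then argue by contraposition: assuming $N_{1}(\mu)$ is finite, I show that $\tau_{\beta, \alpha}^{+}(p)$ is periodic.

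So suppose $N_{1}(\mu) < \infty$ and fix $K_{0}$ beyond which no counted position occurs. Then for all $k > K_{0}$ we have $\ell_{k} = \widetilde{T}_{\beta, \alpha}^{+}(\ell_{k-1})$ and the symbol of $\ell_{k-1}$ equals $\mu_{k}$; consequently the $\widetilde{T}_{\beta, \alpha}^{+}$-itinerary of $\ell_{K_{0}}$ is $(\mu_{K_{0}+1}, \mu_{K_{0}+2}, \dots) = \sigma^{K_{0}}(\mu)$, i.e. $\widetilde{\tau}_{\beta, \alpha}^{+}(\ell_{K_{0}}) = \sigma^{K_{0}}(\mu) = \widetilde{\tau}_{\beta, \alpha}^{+}(x_{K_{0}})$. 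Because $\pi_{\beta, \alpha} \circ \widetilde{\tau}_{\beta, \alpha}^{+}$ is the identity, the map $\widetilde{\tau}_{\beta, \alpha}^{+}$ is injective, so $\ell_{K_{0}} = x_{K_{0}}$. Since $(\widetilde{T}_{\beta, \alpha}^{+})^{K_{0}}$ carries the left endpoint of $C_{K_{0}}$ to $\ell_{K_{0}} = x_{K_{0}} = (\widetilde{T}_{\beta, \alpha}^{+})^{K_{0}}(0)$ and is injective on $C_{K_{0}}$, the point $0$ must be the left endpoint of $C_{K_{0}}$. But every endpoint of a rank-$K_{0}$ cylinder other than the two global endpoints is a $j$-th preimage of the discontinuity $p$ for some $j < K_{0}$; as $0$ is neither global endpoint (here $p > 0$ because $\alpha \leq 2 - \beta < 1$), there is some $1 \leq j < K_{0}$ with $x_{j} = p$. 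Then $x_{j+1} = \widetilde{T}_{\beta, \alpha}^{+}(p) = 0 = x_{0}$, so the orbit of $0$ is periodic; hence $\mu$, and therefore $\omega^{+} = \tau_{\beta, \alpha}^{+}(p)$, are periodic, contradicting the hypothesis.

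The step I expect to be the main obstacle is the dictionary of the first paragraph: making rigorous the equivalence between the lexicographic description of $\widetilde{\Omega}_{\beta, \alpha}^{+}\lvert_{k}$ furnished by Theorem~\ref{thm:Structure} and the interval-dynamics statement $\ell_{k-1} < p$, together with the accompanying claim that the non-global endpoints of the cylinders $C_{k}$ are exactly the iterated preimages of $p$. This needs care with the half-open nature of the cylinders and with the boundary case $\ell_{k-1} = p$; the latter is in fact harmless, since $\widetilde{T}_{\beta, \alpha}^{+}(p) = 0$ means $\ell_{k} = \widetilde{T}_{\beta, \alpha}^{+}(\ell_{k-1})$ still holds there, so such positions are neither counted nor disrupt the itinerary argument. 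Finally, the hypothesis $\alpha \neq 2-\beta$ keeps us off the degenerate boundary where $\tau_{\beta, \alpha}^{-}(p) = (0,\overline{1})$ and the upper gate defining $\widetilde{\Omega}_{\beta, \alpha}^{+}$ collapses, a case in which the cylinder bookkeeping above would have to be adjusted separately.
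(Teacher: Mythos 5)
Your argument is correct in its essentials, but it proves the lemma by a genuinely different route from the paper, and it quietly relies on a different boundary exclusion than the one printed in the statement. The paper's proof is purely symbolic: assuming $\tau_{\beta,\alpha}^{-}(p)$ is not periodic, it recursively constructs the positions $n_{k}$ at which $\sigma^{n_{k-1}-1}(\nu)$ first falls lexicographically below $\tau_{\beta,\alpha}^{-}(p)$, and then checks by a case analysis over all shifts $\sigma^{m}$, using Theorem~\ref{thm:Structure}, that the modified word $\theta=(\nu_{1},\dots,\nu_{n_{k}-1},1,\nu_{n_{k}-n_{k-1}+1},\dots)$ lies in $\widetilde{\Omega}_{\beta,\alpha}^{-}$, so that each $n_{k}$ is counted by $N_{0}$. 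You instead translate $N_{1}$ into the interval dynamics of the extended map (the counted positions are exactly those at which the left endpoint $\ell_{k-1}$ of the image of the cylinder of $0$ gets reset to $0$) and argue by contraposition: finiteness of $N_{1}$ forces $\ell_{K_{0}}=x_{K_{0}}$ via injectivity of $\widetilde{\tau}^{+}_{\beta,\alpha}$, hence $0=\min C_{K_{0}}$, hence $0$ is an iterated preimage of $p$ and $p$ is $\widetilde{T}^{+}_{\beta,\alpha}$-periodic. The steps you flag as delicate do go through: the cylinders are left-closed, $(\widetilde{T}^{+}_{\beta,\alpha})^{k}$ restricted to $C_{k}$ is increasing affine, and $\min C_{k}$ changes only at the reset positions, where it becomes a preimage of $p$. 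Your approach buys a conceptual reading of one-fullness (``the cylinder of $0$ keeps getting cut at $p$''), while the paper's produces an explicit admissible witness word for each counted position without invoking the interval model at all.

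The one point you must repair is the role of the boundary hypothesis. The step ``$0$ is neither global endpoint'' needs $\alpha\neq 0$, since the left endpoint of the extended domain is $-\alpha/(\beta-1)$; it has nothing to do with $\alpha\neq 2-\beta$, and your closing paragraph misattributes the needed exclusion to the ``upper gate'' of $\widetilde{\Omega}^{+}_{\beta,\alpha}$, which your cylinder bookkeeping never uses. Thus you prove part~\ref{keylemma1} under $\alpha\neq 0$, and your symmetry reduction yields part~\ref{keylemma2} under $\alpha\neq 2-\beta$ --- the transpose of the printed exclusions. This is not a defect of your method: as printed, part~\ref{keylemma1} fails at $\alpha=0$, where $\tau^{+}_{\beta,0}(p)=(1,\overline{0})$ is not periodic yet $\sigma(\tau^{+}_{\beta,0}(p))=(\overline{0})$ has $N_{1}=0$, and symmetrically part~\ref{keylemma2} fails at $\alpha=2-\beta$, where $\sigma(\tau^{-}_{\beta,2-\beta}(p))=(\overline{1})$ has $N_{0}=0$. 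Indeed the paper's own construction of $n_{1}$ requires the pattern $(1,0)$ to occur in $\nu$, which by Remark~\ref{rem:00..11..} is exactly the condition $\alpha\neq 2-\beta$ in part~\ref{keylemma2}. So you have in effect proved the corrected statement, which is the one actually invoked in Proposition~\ref{prop:ESFTP}\ref{propA:1}, where $\alpha\in(0,2-\beta)$; just state the hypothesis your argument genuinely uses.
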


\begin{proof}
As the proofs for \ref{keylemma1} and \ref{keylemma2} are essentially the same, we only include a proof of \ref{keylemma2}.  For ease of notation let $\nu = (\nu_{1}, \nu_{2}, \dots )$ denote the infinite word $\sigma(\tau_{\beta, \alpha}^{-}(p))$ and, note that, since $\tau_{\beta, \alpha}^{-}(p)$ is not periodic, $\sigma^{k}(\tau_{\beta,\alpha}^{-}(p)) \neq \tau_{\beta, \alpha}^{-}(p)$, for all $k \in \mathbb{N}$.

Define the non-constant monotonic sequence $(n_{k})_{k \in \mathbb{N}}$ of natural numbers as follows.  Let $n_{1} \in \mathbb{N}$ be the least natural number such that $\nu_{n_{1}-1} = 1$ and $\nu_{n_{1}} = 0$ and, for all $k > 1$, let $n_{k} \in \mathbb{N}$ be such that
\begin{align}\label{lem:key lemma:proof:con}
\sigma^{n_{k-1}-1}(\nu)\lvert_{n_{k}-n_{k-1}} = \tau_{\beta, \alpha}^{-}(p)\lvert_{n_{k} - n_{k - 1}} \quad \text{and} \quad \sigma^{n_{k-1}-1}(\nu)\lvert_{n_{k}-n_{k-1}+1} \prec \tau_{\beta, \alpha}^{-}(p)\lvert_{n_{k} - n_{k - 1}+1}.
\end{align}
Such a sequence exists by Remark~\ref{rem:00..11..} and because $\tau_{\beta, \alpha}^{-}(p)$ is assumed to be not periodic.  We will now show that $\nu_{n_{k}} = 0$ and that
\begin{align*}
(\nu_{1}, \nu_{2}, \dots, \nu_{n_{k}-1},1) \in \widetilde{\Omega}_{\beta, \alpha}^{-}\lvert_{n_{k}},
\end{align*}
for all integers $k > 1$.  By the orderings given in (\ref{lem:key lemma:proof:con}) it immediately follows that $\nu_{n_{k}} = 0$.  In order to conclude the proof, since $\nu \in \widetilde{\Omega}_{\beta, \alpha}^{-}$ and since $\tau_{\beta, \alpha}^{-}(p)$ is not periodic, by Theorem~\ref{thm:Structure}, it is sufficient to show, for all integers $k > 1$ and $m \in \{ 0, 1, 2, \dots, n_{k} -2, n_{k} -1\}$, that
\begin{align*}
\sigma^{m}(\theta) \;\; \begin{cases}
\;\; \preceq \;\; \tau_{\beta, \alpha}^{-}(p) & \quad\text{if} \; \nu_{m + 1} = 0,\\
\;\; \succ \;\; \tau_{\beta, \alpha}^{+}(p) & \quad\textit{otherwise,}
\end{cases}
\end{align*}
where $\theta \coloneqq (\nu_{1}, \nu_{2}, \dots, \nu_{n_{k}-1}, 1, \nu_{n_{k}-n_{k-1}+1}, \nu_{n_{k}-n_{k-1}+2}, \dots)$.

Suppose that $l \in \{1, 2, \dots, k\}$ and $m \in \{ n_{l -1}, n_{l-1}+1, \dots, n_{l}-2 \}$, where $n_{0} \coloneqq 0$.  If $\nu_{m+1} = 1$, then
\begin{align*}
\sigma^{m}(\theta) &= (\nu_{m+1}, \nu_{m+2}, \dots, \nu_{n_{l}-1}, \nu_{n_{l}}, \nu_{n_{l}+1}, \dots, \nu_{n_{k}-1}, 1, \nu_{n_{k}-n_{k-1}+1}, \nu_{n_{k}-n_{k-1}+2}, \dots )\\
&\succ (\nu_{m+1}, \nu_{m+2}, \dots, \nu_{n_{l}-1}, \nu_{n_{l}}, \nu_{n_{l}+1}, \dots, \nu_{n_{k}-1}, 0, \nu_{n_{k} + 1}, \nu_{n_{k} + 2}, \dots )\\
&= \sigma^{m}(\nu)\\
&\succ \tau_{\beta, \alpha}^{+}(p).
\end{align*}
Now suppose $v_{m+1} = 0$.  By \eqref{lem:key lemma:proof:con}, we have $(\nu_{n_{l-1}}, \nu_{n_{l-1} + 1},  \dots, \nu_{n_{l} - 1}, 1) = \tau_{\beta, \alpha}^{-}(p)\lvert_{n_l-n_{l-1}+1}$, and so,
\begin{align*}
\sigma^{m-n_{l-1}+1}(\tau_{\beta, \alpha}^{-}(p))
= (\nu_{m + 1},\nu_{m + 2}, \dots, \nu_{n_{l} - 1}, 1, \nu_{n_{l}-n_{l-1}+1}, \nu_{n_{l}-n_{l-1}+2}, \dots).
\end{align*}
Let us first consider the case $l = k$. Since by assumption $v_{m+1} = 0$, and since, by Theorem~\ref{thm:Structure}, we have that $\sigma^{m-n_{l-1}+1}(\tau_{\beta, \alpha}^{-}(p)) \preceq \tau_{\beta, \alpha}^{-}(p)$, it follows that
\begin{align*}
\sigma^{m}(\theta) = (\nu_{m + 1},\nu_{m + 2}, \dots, \nu_{n_{l} - 1}, 1, \nu_{n_{l}-n_{l-1}+1}, \nu_{n_{l}-n_{l-1}+2}, \dots) \preceq \tau_{\beta, \alpha}^{-}(p).
\end{align*}
Therefore, in this case, namely $l = k$, the result follows.  Now assume that $l < k$.  In this case we have that
\begin{align*}
\sigma^{m}(\theta) &= (\nu_{m + 1},\nu_{m + 2},  \dots, \nu_{n_{l} - 1},\nu_{n_l}, \nu_{n_{l} + 1}, \dots, \nu_{n_{k}-1}, 1, \nu_{n_{k}-n_{k-1}+1}, \nu_{n_{k}-n_{k-1}+2}, \dots)\\
&\prec (\nu_{m + 1}, \nu_{m + 2}, \dots, \nu_{n_{l} - 1}, 1, \nu_{n_{l}-n_{l-1}+1}, \nu_{n_{l}-n_{l-1}+2}, \dots)\\
&\preceq \tau_{\beta, \alpha}^{-}(p).
\end{align*}
If $l \in \{1, 2, \dots, k-2\}$ and $m = n_{l} -1$, then by the orderings given in (\ref{lem:key lemma:proof:con}),
\begin{align*}
\sigma^{m}(\theta) = (\underbrace{\nu_{n_{l}}, \dots, \nu_{n_{l+1}}}_{\prec \tau_{\beta, \alpha}^{-}(p)\lvert_{n_{l+1}-n_{l}+1}}, \dots, \nu_{n_{k}-1}, 1, \nu_{n_{k}-n_{k-1}+1}, \nu_{n_{k}-n_{k-1}+2}, \dots ) \prec \tau_{\beta, \alpha}^{-}(p).
\end{align*}
If  $m = n_{k - 1} -1$, then by the orderings given in (\ref{lem:key lemma:proof:con}),
\begin{align*}
\sigma^{m}(\theta)&= (\underbrace{\nu_{n_{k-1}}, \dots, \nu_{n_{k}-1}, 1}_{= \tau_{\beta, \alpha}^{-}(p)\lvert_{n_{k}-n_{k-1}+1}}, \underbrace{\nu_{n_{k}-n_{k-1}+1} \nu_{n_{k}-n_{k-1}+2}, \dots}_{=\sigma^{n_{k}-n_{k-1}+1}(\tau_{\beta, \alpha}^{-}(p))} ) =  \tau_{\beta, \alpha}^{-}(p).
\end{align*}
Finally, if $m = n_{k}-1$, then either $\nu_{n_{k}-n_{k-1}+1} =1$, in which case the result follows from Remark~\ref{rem:00..11..} or else $\nu_{n_{k}-n_{k-1}+1} =0$ in which case
\begin{align*}
\sigma^{m}(\theta)&= (1, \nu_{n_{k}-n_{k-1}+1} \nu_{n_{k}-n_{k-1}+2}, \dots ) = \sigma^{n_{k}-n_{k-1}}(\tau_{\beta, \alpha}^{-}(p)) \succ \tau_{\beta, \alpha}^{+}(p).
\end{align*}
This concludes the proof.
\end{proof}

\subsection{Symmetric intermediate $\beta$-shifts}

\noindent The following remarks and result on symmetric intermediate $\beta$-shifts will assist us in shortening the proof of Theorem~\ref{thm:ESFTP} and the proof of Theorem~\ref{thm:SFTandTransitive}\ref{item:2:Density2}.  For this, let us recall that two maps $R \colon X \circlearrowleft$ and $S \colon Y \circlearrowleft$ defined on compact metric spaces are called \textit{topologically conjugate} if there exists a homeomorphism $h \colon X \to Y$ such that $S \circ h = h \circ R$.

The maps $T_{\beta, \alpha}^{-}$ and $T_{\beta, 2 - \alpha - \beta}^{+}$ are topologically conjugate and the maps $T_{\beta, \alpha}^{-}$ and $T_{\beta, 2-\alpha-\beta}^{+}$ are topologically conjugate, where the conjugating map is given by $\hbar \colon x \mapsto 1-x$.  This is immediate from the definition of the maps involved.  Similarly, the maps  $\widetilde{T}_{\beta, \alpha}^{+}$ and $\widetilde{T}_{\beta, 2-\alpha-\beta}^{-}$ are topologically conjugate and the maps $\widetilde{T}_{\beta, \alpha}^{-}$ and $\widetilde{T}_{\beta, 2-\alpha-\beta}^{+}$ are topologically conjugate, where the conjugating map is also $\hbar$.  When lifted to $\{ 0, 1\}^{\mathbb{N}}$ the mapping $\hbar$ becomes the \textit{symmetric map} $* \colon \{ 0, 1\}^{\mathbb{N}} \circlearrowleft$ defined by,
\begin{align*}
*(\omega_{1}, \omega_{2}, \dots) = (\omega_{1} +1 \bmod 2, \omega_{2} +1 \bmod 2, \dots),
\end{align*}
for all $\omega \in \{ 0, 1\}^{\mathbb{N}}$.  From this we conclude that the following couples are topologically conjugate where the conjugating map is the symmetric map $*$:
\[
\sigma\lvert_{\Omega_{\beta,\alpha}^{-}} \; \text{and} \; \sigma\lvert_{\Omega_{\beta, 2-\alpha-\beta}^{+}},
\;\;\,
\sigma\lvert_{\Omega_{\beta,\alpha}^{+}} \; \text{and} \; \sigma\lvert_{\Omega_{\beta, 2-\alpha-\beta}^{-}},
\;\;\,
\sigma\lvert_{\widetilde{\Omega}_{\beta,\alpha}^{-}} \; \text{and} \; \sigma\lvert_{\widetilde{\Omega}_{\beta, 2-\alpha-\beta}^{+}}
\;\;\,
\text{and}
\;\;\,
\sigma\lvert_{\widetilde{\Omega}_{\beta,\alpha}^{+}} \; \text{and} \; \sigma\lvert_{\widetilde{\Omega}_{\beta, 2-\alpha-\beta}^{-}}.
\]

\begin{theorem}\label{thm:BM2011}
For all $(\beta, \alpha) \in \Delta$ we have that $\Omega^{\pm}_{\beta, \alpha} = \Omega^{\mp}_{\beta, 2 - \beta - \alpha}$ and $\widetilde{\Omega}^{\pm}_{\beta, \alpha} = \widetilde{\Omega}^{\mp}_{\beta, 2 - \beta - \alpha}$. Moreover, given $\beta \in (1, 2)$, there exists a unique point $\alpha \in [0, 2-\beta]$, namely $\alpha = 1 - \beta/2$, such that $\tau_{\beta, \alpha}^{+}(p) = *(\tau_{\beta, \alpha}^{-}(p))$, where $p = p_{\beta, \alpha} = (1-\alpha)/\beta$.
\end{theorem}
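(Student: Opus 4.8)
The first assertion merely records, at the level of the shift spaces themselves, the topological conjugacies via the symmetric map $*$ that were established immediately before the theorem. I would derive it by promoting the identity $T^{-}_{\beta,\alpha}(x)=1-T^{+}_{\beta,2-\beta-\alpha}(1-x)$, valid for all $x\in[0,1]$, to the level of expansions. Since $p_{\beta,2-\beta-\alpha}=1-p_{\beta,\alpha}$, tracking the $0/1$ labelling across the reflection $\hbar\colon x\mapsto 1-x$ gives $\tau^{-}_{\beta,\alpha}(x)=*(\tau^{+}_{\beta,2-\beta-\alpha}(1-x))$ for every $x\in[0,1]$, together with its mirror image obtained by interchanging the roles of $+$ and $-$. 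Letting $x$ range over $[0,1]$ shows that $*$ carries $\Omega^{\pm}_{\beta,\alpha}$ onto $\Omega^{\mp}_{\beta,2-\beta-\alpha}$ while intertwining $\sigma$, which is the first part; the identical argument applied to $\widetilde T^{\pm}_{\beta,\alpha}$ disposes of the extended spaces.

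For the remaining assertion, write $\alpha_{0}=1-\beta/2$ and observe that $\alpha_{0}$ is the unique fixed point of the involution $\alpha\mapsto 2-\beta-\alpha$ on $[0,2-\beta]$, so that $2-\beta-\alpha_{0}=\alpha_{0}$ and hence $p=p_{\beta,\alpha_{0}}=(1-\alpha_{0})/\beta=1/2$. For existence I would argue directly with the orbit of $p$. Specialising the identity above to $\alpha=\alpha_{0}$ gives $T^{-}_{\beta,\alpha_{0}}(x)=1-T^{+}_{\beta,\alpha_{0}}(1-x)$ for all $x$, and since $1-p=p$ a straightforward induction on $n$ yields
\begin{align*}
(T^{-}_{\beta,\alpha_{0}})^{n-1}(p)=1-(T^{+}_{\beta,\alpha_{0}})^{n-1}(p)\qquad\text{for all }n\in\mathbb{N}.
\end{align*}
Comparing the two orbits against the common threshold $p=1/2$, and noting that the convention $\tau^{-}_{n}=0\Leftrightarrow(T^{-})^{n-1}(p)\leq p$ is the exact reflection of $\tau^{+}_{n}=1\Leftrightarrow(T^{+})^{n-1}(p)\geq p$, shows that the $n$-th symbols of $\tau^{-}_{\beta,\alpha_{0}}(p)$ and $\tau^{+}_{\beta,\alpha_{0}}(p)$ are opposite for every $n$; that is, $\tau^{+}_{\beta,\alpha_{0}}(p)=*(\tau^{-}_{\beta,\alpha_{0}}(p))$.

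Uniqueness I would extract by projecting. From the commuting diagram (\ref{diag:commutative}) one has $\pi_{\beta,\alpha}(\tau^{\pm}_{\beta,\alpha}(p))=p$, and a one-line computation using $\sum_{k\geq 1}\beta^{-k}=(\beta-1)^{-1}$ gives the reflection formula
\begin{align*}
\pi_{\beta,\alpha}(*(\omega))=\frac{1-2\alpha}{\beta-1}-\pi_{\beta,\alpha}(\omega),\qquad \omega\in\{0,1\}^{\mathbb{N}}.
\end{align*}
Thus, if $\tau^{+}_{\beta,\alpha}(p)=*(\tau^{-}_{\beta,\alpha}(p))$ for some $\alpha\in[0,2-\beta]$, applying $\pi_{\beta,\alpha}$ to both sides forces $p=(1-2\alpha)/(\beta-1)-p$, i.e.\ $2p=(1-2\alpha)/(\beta-1)$. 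Substituting $p=(1-\alpha)/\beta$ and clearing denominators collapses this to $\beta-2+2\alpha=0$, whence $\alpha=1-\beta/2=\alpha_{0}$. This proves both that $\alpha_{0}$ is a solution and that it is the only one.

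The one delicate point is the existence step at the discontinuity $p$: the definitions of $\tau^{+}$ and $\tau^{-}$ differ precisely there (via $T^{+}_{\beta,\alpha_{0}}(p)=0$ and $T^{-}_{\beta,\alpha_{0}}(p)=1$), so the induction and the subsequent symbol-by-symbol comparison must be run keeping these conventions in view. It is exactly the coincidence $p=1/2$, equivalent to $\alpha_{0}$ being the reflection's fixed point, that makes the reflection $\hbar$ fix $p$ and thereby aligns the thresholds and the two labelling conventions. The uniqueness half, by contrast, is a purely arithmetic consequence of the projection and requires no further structural input.
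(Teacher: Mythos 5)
Your proof is correct, and for the second assertion it takes a genuinely different route from the paper. For the first assertion you and the paper do the same thing: read the conjugacies via the symmetric map $*$ off the reflection identity $T^{-}_{\beta,\alpha}(x)=1-T^{+}_{\beta,2-\beta-\alpha}(1-x)$ and Theorem~\ref{thm:Structure} (note that the stated equality of the shift spaces is to be understood up to applying $*$, exactly as you interpret it; literal set equality fails, e.g.\ $(\overline{0})\in\Omega^{+}_{\beta,0}$ but $(\overline{0})\notin\Omega^{-}_{\beta,2-\beta}$). For the second assertion the paper simply invokes \cite[Theorem 2]{BM:2011} after conjugating $\widetilde{T}^{\pm}_{\beta,\alpha}$ into the framework of that reference, whereas you prove it from scratch: existence by observing that $\alpha_{0}=1-\beta/2$ is the fixed point of $\alpha\mapsto 2-\beta-\alpha$, so $p=1/2$ is fixed by $x\mapsto 1-x$, and then running the induction $(T^{-}_{\beta,\alpha_{0}})^{n-1}(p)=1-(T^{+}_{\beta,\alpha_{0}})^{n-1}(p)$ and matching the strict/non-strict threshold conventions (which are indeed exactly exchanged under the reflection); uniqueness by combining $\pi_{\beta,\alpha}\circ\tau^{\pm}_{\beta,\alpha}=\mathrm{id}$ from \eqref{diag:commutative} with the reflection formula $\pi_{\beta,\alpha}(*(\omega))=(1-2\alpha)/(\beta-1)-\pi_{\beta,\alpha}(\omega)$, which collapses to $\alpha=1-\beta/2$. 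Both computations check out. What your approach buys is a short, self-contained and elementary verification that makes explicit why $p=1/2$ is the mechanism aligning the two labelling conventions; what the paper's citation buys is brevity and a pointer to the more general classification of symmetric itinerary sets, at the cost of importing an external result.
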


\begin{proof}
The first part of the result follows immediately from the above comments and Theorem~\ref{thm:Structure}.  The second part of the result follows from an application of \cite[Theorem 2]{BM:2011} together with the fact that the maps $\widetilde{T}_{\beta, \alpha}^{\pm}$ are topologically conjugate to a sub-class of maps considered in \cite{BM:2011}, where the conjugating map $\hbar_{\beta, \alpha} \colon [0, 1] \to [-\alpha/(\beta - 1), (1- \alpha)/(\beta - 1)]$ is given by $\hbar_{\beta, \alpha}(x) = (x - \alpha)/(\beta - 1)$.
\end{proof}

\section{Proofs of Theorem~\ref{thm:ESFTP} and Corollary~\ref{cor:fibersSFT}}\label{sec:TheoremA}

\noindent Let us first prove an analogous result for the extended model $\widetilde{\Omega}_{\beta, \alpha}$.  Throughout this section, for a given $(\beta, \alpha) \in \Delta$, we set $p = p_{\beta, \alpha}$.

\begin{proposition}\label{prop:ESFTP}
Let $\beta \in (1, 2)$.
\begin{enumerate}
\item\label{propA:1} If $\alpha \in (0, 2 - \beta)$, then $\widetilde{\Omega}_{\beta, \alpha}$ is a SFT if and only if both $\tau^{\pm}_{\beta, \alpha}(p)$ are periodic.
\item\label{propA:2} If $\alpha = 2 - \beta$, then $\widetilde{\Omega}_{\beta, \alpha}$ is a SFT if and only if $\tau^{-}_{\beta, \alpha}(p)$ is periodic.
\item\label{propA:3} If $\alpha = 0$, then $\widetilde{\Omega}_{\beta, \alpha}$ is a SFT if and only if $\tau^{+}_{\beta, \alpha}(p)$ is periodic.
\end{enumerate}
\end{proposition}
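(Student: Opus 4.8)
The plan is to take Theorem~\ref{thm:equivalent-def-SFT} (existence of a finite set of forbidden words) as the working definition of a SFT, and to feed it the explicit description of $\widetilde{\Omega}_{\beta,\alpha}=\widetilde{\Omega}^{+}_{\beta,\alpha}\cup\widetilde{\Omega}^{-}_{\beta,\alpha}$ supplied by Theorem~\ref{thm:Structure}. Using Remark~\ref{rmk:tildeVnon-tilde} to replace the outer boundary words by $(\overline{0})$ and $(\overline{1})$, the membership condition $\omega\in\widetilde{\Omega}_{\beta,\alpha}$ becomes the requirement that every shift $\sigma^{n}(\omega)$ avoid the lexicographic gap between $\tau^{-}_{\beta,\alpha}(p)$ and $\tau^{+}_{\beta,\alpha}(p)$; by Remark~\ref{rem:00..11..} these start with $0$ and $1$ respectively, so $\tau^{-}_{\beta,\alpha}(p)\prec\tau^{+}_{\beta,\alpha}(p)$ and the gap is genuinely an interval. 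Thus a finite set of forbidden words exists exactly when this gap-avoidance is a \emph{local} condition, and the proposition reduces to deciding when the two kneading invariants force only finitely many minimal forbidden words. I would prove part~\ref{propA:1} in full and then obtain \ref{propA:2} and \ref{propA:3} as degenerate specialisations.

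For the implication ``both $\tau^{\pm}_{\beta,\alpha}(p)$ periodic $\Rightarrow$ SFT'' in \ref{propA:1}, suppose the invariants have periods $s$ and $t$. I would show that any finite word whose occurrence at some position of an $\omega$ forces $\sigma^{n}(\omega)$ into the open gap $(\tau^{-}_{\beta,\alpha}(p),\tau^{+}_{\beta,\alpha}(p))$ must already contain a sub-word of bounded length (a constant depending only on $s$ and $t$) with the same property: once a candidate has agreed with one of the periodic boundary words for a full period, it has either already crossed the boundary, giving a short certificate, or it may continue periodically without ever crossing. Hence the set of minimal forbidden words is finite and Theorem~\ref{thm:equivalent-def-SFT} applies. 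This is the Parry-type direction and should be essentially bookkeeping once the gap description is fixed; the only delicacy is the two half-open boundaries separating $\widetilde{\Omega}^{+}_{\beta,\alpha}$ from $\widetilde{\Omega}^{-}_{\beta,\alpha}$, where the periodic words $\tau^{\pm}_{\beta,\alpha}(p)$ themselves lie in $\widetilde{\Omega}_{\beta,\alpha}$ and so must never be declared forbidden.

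The substantive direction is the contrapositive ``$\tau^{-}_{\beta,\alpha}(p)$ (say) not periodic $\Rightarrow$ not SFT'', and this is where Lemma~\ref{lem:keylemma} enters. For $\alpha\in(0,2-\beta)$ the relevant hypothesis is met and $\sigma(\tau^{-}_{\beta,\alpha}(p))$ is zero-full, producing infinitely many descent indices $k$ at which the prefix of $\sigma(\tau^{-}_{\beta,\alpha}(p))$ may legally be followed by $1$ yet is followed by $0$. I would turn each descent index into a minimal forbidden word and argue that, because $\tau^{-}_{\beta,\alpha}(p)$ is not periodic, these words cannot all be absorbed into a common bounded length: for every candidate memory $M$ the finitely many length-$M$ suffixes cannot simultaneously determine the admissible continuations at all descent indices. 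This contradicts Theorem~\ref{thm:equivalent-def-SFT}, so $\widetilde{\Omega}_{\beta,\alpha}$ is not a SFT; the case of $\tau^{+}_{\beta,\alpha}(p)$ non-periodic is symmetric via one-fullness. I expect this to be the main obstacle: the delicate point is to upgrade zero/one-fullness together with non-periodicity into genuinely \emph{infinitely many minimal} forbidden words, so that finite type (and not merely the weaker soficity) fails, while carefully tracking how both boundary words $\tau^{\pm}_{\beta,\alpha}(p)$ enter the admissibility test that defines fullness.

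Finally, \ref{propA:2} and \ref{propA:3} are the endpoint cases $\alpha\in\{0,2-\beta\}$. By Theorem~\ref{thm:BM2011} the symmetry $*$ exchanges $\alpha=0$ and $\alpha=2-\beta$, so it suffices to treat one of them. At such an endpoint Remark~\ref{rem:00..11..} forces exactly one kneading invariant to degenerate, namely $\tau^{-}_{\beta,\alpha}(p)=(0,\overline{1})$ when $\alpha=2-\beta$ and $\tau^{+}_{\beta,\alpha}(p)=(1,\overline{0})$ when $\alpha=0$; the degenerate invariant is automatically non-periodic but carries no fullness, while the applicable half of Lemma~\ref{lem:keylemma} remains valid (since $\alpha=2-\beta\neq0$, respectively $\alpha=0\neq2-\beta$). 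Consequently the argument of \ref{propA:1} runs verbatim using only the single remaining, non-degenerate invariant, and shows that $\widetilde{\Omega}_{\beta,\alpha}$ is a SFT precisely when that invariant is periodic, which is the stated one-sided criterion.
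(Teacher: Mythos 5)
Your proposal is correct and follows essentially the same route as the paper: the forward implication via Lemma~\ref{lem:keylemma} (zero/one-fullness producing unboundedly long minimal forbidden words from the descent indices of a non-periodic kneading invariant), the reverse implication by checking the finite-memory condition against the lexicographic gap of Theorem~\ref{thm:Structure} using a common period, and the endpoint cases handled by noting which half of Lemma~\ref{lem:keylemma} still applies and invoking the symmetry of Theorem~\ref{thm:BM2011}.
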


\begin{proof}[Proof of Proposition~\ref{prop:ESFTP}\ref{propA:1}]
We first prove that $\tau^{-}_{\beta, \alpha}(p)$ is periodic if $\widetilde{\Omega}_{\beta, \alpha}$ is a SFT.  Assume by way of contradiction that $\widetilde{\Omega}_{\beta, \alpha}$ is a SFT, but that $\tau^{-}_{\beta, \alpha}(p)$ is not periodic.  Then by Lemma~\ref{lem:keylemma} the infinite word $\nu = \sigma (\tau_{\beta, \alpha}^{-}(p))$ is zero-full, namely
\begin{align*}
N_{0}(\nu) = \left\lvert \left\{ k > 2 \colon (\nu_{1}, \nu_{2}, \dots, \nu_{k-1}, 1) \in \widetilde{\Omega}_{\beta, \alpha}^{-}\lvert_{k} \; \text{and} \; \nu_{k} = 0 \right\} \right\rvert
\end{align*}
is not finite.  Thus, there exists a non-constant monotonic sequence $(n_{k})_{k \in \mathbb{N}}$ of natural numbers such that
\begin{align*}
(\nu_{1}, \nu_{2}, \dots, \nu_{n_{k}-1} 1) \in \widetilde{\Omega}^{-}_{\beta, \alpha}\lvert^{*} \quad \text{and} \quad
(0, \nu_{1}, \nu_{2}, \dots, \nu_{n_{k}-1}, 0) = \tau_{\beta, \alpha}^{-}(p)\lvert_{n_{k}+1}.
\end{align*}
This together with Corollary~\ref{cor:shift_invarance} implies that $\widetilde{\Omega}_{\beta, \alpha}$ is not a SFT, contradicting our assumption.

The statement that if $\widetilde{\Omega}_{\beta, \alpha}$ is a SFT, then $\tau^{+}_{\beta, \alpha}(p)$ is periodic, follows in an identical manner to the above, where we use one-fullness instead of zero-fullness.

We will now prove the converse statement: for $(\beta, \alpha) \in \Delta$, if both the kneading invariants of $\widetilde{T}^{\pm}_{\beta, \alpha}$ are periodic, then $\widetilde{\Omega}_{\beta, \alpha}$ is a SFT.  Without loss of generality we may assume that both the kneading invariants of $\widetilde{T}^{\pm}_{\beta, \alpha}$ have the same period length $n$.  This together with Corollary~\ref{cor:shift_invarance} implies that it is sufficient to prove, for all $\omega = (\omega_{1}, \omega_{2}, \dots, \omega_{k})$, $\xi = (\xi_{1}, \xi_{2}, \dots, \xi_{m}) \in \widetilde{\Omega}_{\beta, \alpha}\lvert^{*}$ with $k > n$, that if
\begin{align}\label{eq:Periodic->SFT1}
(\omega_{k - n + 1}, \omega_{k - n + 2}, \dots, \omega_{k}, \xi_{1}, \xi_{2}, \dots, \xi_{m}) \in \widetilde{\Omega}_{\beta, \alpha}\lvert^{*},
\end{align}
then
\begin{align*}
(\omega_{1}, \omega_{2}, \dots, \omega_{k}, \xi_{1}, \xi_{2}, \dots, \xi_{m}) \in \widetilde{\Omega}_{\beta, \alpha}\lvert^{*}.
\end{align*}
If the inclusion in (\ref{eq:Periodic->SFT1}) holds, then there exists at least one $\eta = (\eta_{1}, \eta_{2}, \eta_{1} \dots) \in \{0, 1\}^{\mathbb{N}}$ with
\begin{align*}
(\omega_{k - n + 1}, \omega_{k - n + 2}, \dots, \omega_{k}, \xi_{1}, \xi_{2}, \dots, \xi_{m}, \eta_{1}, \eta_{2}, \dots) \in \widetilde{\Omega}_{\beta, \alpha}.
\end{align*}
Fix such an $\eta$ and consider the infinite word $\theta \coloneqq (\omega_{1}, \omega_{2}, \dots, \omega_{k}, \xi_{1}, \xi_{2}, \dots, \xi_{m}, \eta_{1}, \eta_{2}, \dots)$.  We claim that $\theta$ belongs to the space $\widetilde{\Omega}_{\beta, \alpha}$.  Suppose that this is not the case, then by Theorem~\ref{thm:Structure}, there exists $l \in \{ 0, 1, \dots, k- n -1\}$ such that
\begin{align*}
\tau_{\beta, \alpha}^{-}(p) \prec \sigma^{l}(\theta) \prec \tau_{\beta, \alpha}^{+}(p).
\end{align*}
By the definition of the lexicographic ordering and Corollary~\ref{cor:shift_invarance}, since both $\tau_{\beta,\alpha}^{\pm}(p)$ are periodic with period length $n$ and $\omega = (\omega_{1}, \omega_{2}, \dots, \omega_{k}) \in \widetilde{\Omega}_{\beta, \alpha}\lvert^{*}$, there exists $t \in \mathbb{N}$ with $l +(t -1)n < k \leq l + tn$ such that if $m < l + tn - k$, then
\begin{align*}
\tau_{\beta, \alpha}^{-}(p)\lvert_{n} \prec (\omega_{l+(t-1)n + 1}, \omega_{l+(t-1)n + 2}, \dots, \omega_{k}, \xi_{1}, \xi_{2}, \dots, \xi_{m}, \eta_{1}, \eta_{2}, \dots, \eta_{l + tn - k - m}) \prec \tau_{\beta, \alpha}^{+}(p)\lvert_{n},
\end{align*}
otherwise,
\begin{align*}
\tau_{\beta, \alpha}^{-}(p)\lvert_{n} \prec (\omega_{l+(t-1)n + 1}, \omega_{l+(t-1)n + 2}, \dots, \omega_{k}, \xi_{1}, \xi_{2}, \dots, \xi_{l + tn - k}) \prec \tau_{\beta, \alpha}^{+}(p)\lvert_{n}.
\end{align*}
Notice that $k \leq l+t n$ implies $k - n +1 \leq l + (t-1)n + 1$.  So
\begin{align*}
\tau_{\beta, \alpha}^{-}(p) \prec \sigma^{l + tn - k}(\omega_{k - n + 1}, \omega_{k - n + 2}, \dots, \omega_{k}, \xi_{1}, \xi_{2}, \dots, \xi_{m}, \eta_{1}, \eta_{2}, \dots) \prec \tau_{\beta, \alpha}^{+}(p),
\end{align*}
contradicting how $\eta$ was originally chosen.
\end{proof}

\begin{proof}[Proof of Proposition~\ref{prop:ESFTP}\ref{propA:2}]
The forward direction follows in the same way as in the proof of the forward direction of Proposition~\ref{prop:ESFTP}\ref{propA:1}.  Noting that, $p = 1 - 1/\beta$, since $\alpha = 2 - \beta$, and thus $\tau_{\beta, \alpha}^{+}(p) = (1, 0, 0, 0, \dots )$. The reverse direction follows analogously to the proof of the reverse direction of Proposition~\ref{prop:ESFTP}\ref{propA:1}, except that we set $n$ to be the length of the period of $\tau_{\beta, \alpha}^{-}(p)$.
\end{proof}

\begin{proof}[Proof of Proposition~\ref{prop:ESFTP}\ref{propA:3}]
This follows from Proposition~\ref{prop:ESFTP}\ref{propA:2} and Theorem~\ref{thm:BM2011}.
\end{proof}

We will shortly show in Proposition \ref{prop:SFText->SFT} that $\widetilde{\Omega}_{\beta, \alpha}$ is a SFT if and only if $\Omega_{\beta, \alpha}$ is a SFT.  This result together with Proposition~\ref{prop:ESFTP} completes the proof of Theorem~\ref{thm:ESFTP}.  In order to prove the backwards implication of Proposition~\ref{prop:SFText->SFT} we will use Theorem~\ref{thm:equivalent-def-SFT}.  Namely that a subshift $\Omega$ is a SFT if and only if there exists a finite set of forbidden word $F$ such that if $\xi \in F$, then $\sigma^{m}(\omega)\lvert_{\lvert \xi \rvert} \neq \xi$, for all $\omega \in \Omega$.

\begin{proposition}\label{prop:SFText->SFT}
For $(\beta, \alpha) \in \Delta$ the subshift $\widetilde{\Omega}_{\beta, \alpha}$ is a SFT if and only if the subshift $\Omega_{\beta, \alpha}$ is a SFT.
\end{proposition}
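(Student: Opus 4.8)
The plan is to read off everything from the structure theorem (Theorem~\ref{thm:Structure}), which describes $\Omega_{\beta,\alpha}^{\pm}$ and $\widetilde{\Omega}_{\beta,\alpha}^{\pm}$ by the \emph{same} pair of kneading invariants $\tau^{\pm}_{\beta,\alpha}(p)$; the two descriptions differ only in that the outer bounds $\tau^{+}_{\beta,\alpha}(0)$ and $\tau^{-}_{\beta,\alpha}(1)$ occurring for $\Omega_{\beta,\alpha}$ are relaxed to the extreme words $(\overline{0})$ and $(\overline{1})$ for $\widetilde{\Omega}_{\beta,\alpha}$. First I would record the elementary consequence that, since $(\overline{0}) \preceq \tau^{+}_{\beta,\alpha}(0)$ and $\tau^{-}_{\beta,\alpha}(1) \preceq (\overline{1})$, the defining inequalities for $\Omega^{\pm}_{\beta,\alpha}$ are strictly more restrictive than those for $\widetilde{\Omega}^{\pm}_{\beta,\alpha}$. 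Hence $\Omega_{\beta,\alpha} \subseteq \widetilde{\Omega}_{\beta,\alpha}$, so that $\Omega_{\beta,\alpha}|^{*} \subseteq \widetilde{\Omega}_{\beta,\alpha}|^{*}$ and every finite word that fails to occur in $\widetilde{\Omega}_{\beta,\alpha}$ also fails to occur in $\Omega_{\beta,\alpha}$. In fact one has the clean description $\Omega_{\beta,\alpha} = \widetilde{\Omega}_{\beta,\alpha} \cap B$, where $B$ is the subshift determined by the two one-sided constraints $\tau^{+}_{\beta,\alpha}(0) \preceq \sigma^{n}(\omega) \preceq \tau^{-}_{\beta,\alpha}(1)$ for all $n$.

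For the implication $\widetilde{\Omega}_{\beta,\alpha}$ SFT $\Rightarrow \Omega_{\beta,\alpha}$ SFT, which is the one I would carry out using the forbidden-word criterion (Theorem~\ref{thm:equivalent-def-SFT}), the key is that by Remark~\ref{rmk:tildeVnon-tilde} the bounding words $\tau^{+}_{\beta,\alpha}(0) = \sigma(\tau^{+}_{\beta,\alpha}(p))$ and $\tau^{-}_{\beta,\alpha}(1) = \sigma(\tau^{-}_{\beta,\alpha}(p))$ are shifts of the kneading invariants. By Proposition~\ref{prop:ESFTP}, the hypothesis that $\widetilde{\Omega}_{\beta,\alpha}$ is a SFT forces $\tau^{\pm}_{\beta,\alpha}(p)$ (and hence the two outer bounds) to be periodic. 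I would then take a finite forbidden set for $\widetilde{\Omega}_{\beta,\alpha}$ and enlarge it by a finite family $F'$ detecting a shift of $\omega$ that drops below $\tau^{+}_{\beta,\alpha}(0)$ or rises above $\tau^{-}_{\beta,\alpha}(1)$. The point is that for $\omega \in \widetilde{\Omega}_{\beta,\alpha}$ the gap-avoidance already present in $\widetilde{\Omega}_{\beta,\alpha}$ severely restricts how long a shift of $\omega$ can agree with these periodic bounding words before disagreeing, so that each violation is witnessed by a factor of bounded length and only finitely many minimal such witnesses arise. Theorem~\ref{thm:equivalent-def-SFT} then yields that $\Omega_{\beta,\alpha}$ is a SFT. (Equivalently, one may observe that a one-sided constraint against a periodic word produces a SFT and that a finite intersection of SFTs is a SFT, giving $\Omega_{\beta,\alpha} = \widetilde{\Omega}_{\beta,\alpha} \cap B$ as a SFT directly.)

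For the reverse implication $\Omega_{\beta,\alpha}$ SFT $\Rightarrow \widetilde{\Omega}_{\beta,\alpha}$ SFT I would again pass through periodicity: it suffices to prove that if $\Omega_{\beta,\alpha}$ is a SFT then the relevant $\tau^{\pm}_{\beta,\alpha}(p)$ are periodic, whereupon Proposition~\ref{prop:ESFTP} delivers that $\widetilde{\Omega}_{\beta,\alpha}$ is a SFT. To obtain periodicity I would run the contrapositive of the forward direction of Proposition~\ref{prop:ESFTP}\ref{propA:1}: assuming, say, that $\tau^{-}_{\beta,\alpha}(p)$ is not periodic, Lemma~\ref{lem:keylemma}\ref{keylemma2} makes $\sigma(\tau^{-}_{\beta,\alpha}(p))$ zero-full, so that for infinitely many $k$ there are admissible words $(\nu_{1},\dots,\nu_{k-1},1)$ together with the relation $(0,\nu_{1},\dots,\nu_{k-1},0) = \tau^{-}_{\beta,\alpha}(p)|_{k+1}$; these exhibit arbitrarily long words that may follow the block $(\dots,\nu_{k-1},1)$ but not its proper suffixes, contradicting the SFT property. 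The one additional thing to verify, compared with Proposition~\ref{prop:ESFTP}, is that these witnesses survive inside the smaller space $\Omega_{\beta,\alpha}$ rather than merely in $\widetilde{\Omega}_{\beta,\alpha}$; here I would use that $\tau^{-}_{\beta,\alpha}(p)$ is itself a genuine point of $\Omega^{-}_{\beta,\alpha}$ (being the expansion of $p \in [0,1]$), together with shift invariance (Corollary~\ref{cor:shift_invarance}), to transport the witnesses into $\Omega_{\beta,\alpha}$.

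The main obstacle I anticipate is precisely this passage across the inclusion $\Omega_{\beta,\alpha} \subseteq \widetilde{\Omega}_{\beta,\alpha}$. The surplus sequences of $\widetilde{\Omega}_{\beta,\alpha}$ come from orbits of the extended map that enter $[0,1]$ from outside, and the entry time is \emph{not} uniformly bounded: orbits approaching the repelling endpoints $-\alpha/(\beta-1)$ and $(1-\alpha)/(\beta-1)$ create prefixes consisting of arbitrarily long runs of $0$'s, respectively $1$'s. One therefore cannot simply delete a fixed number of initial symbols to move between the two shifts, and must instead argue that such long runs can interface with the remainder of a word only through the periodic bounding words, so that the relaxation from $\Omega_{\beta,\alpha}$ to $\widetilde{\Omega}_{\beta,\alpha}$ alters the collection of minimal forbidden words by at most finitely many elements. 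Finally I would dispose of the degenerate endpoints $\alpha \in \{0, 2-\beta\}$ separately, invoking Theorem~\ref{thm:BM2011} to reduce the lazy case to the greedy one, since there one kneading invariant collapses to $(1,\overline{0})$ or $(0,\overline{1})$ and the dichotomy of Proposition~\ref{prop:ESFTP}\ref{propA:2}--\ref{propA:3} applies in place of \ref{propA:1}.
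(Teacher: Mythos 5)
Your forward direction is sound and is essentially the paper's argument in different packaging: once Proposition~\ref{prop:ESFTP} forces $\tau^{\pm}_{\beta,\alpha}(p)$, and hence the outer bounds $\tau^{+}_{\beta,\alpha}(0)=\sigma(\tau^{+}_{\beta,\alpha}(p))$ and $\tau^{-}_{\beta,\alpha}(1)=\sigma(\tau^{-}_{\beta,\alpha}(p))$, to be periodic, the decomposition $\Omega_{\beta,\alpha}=\widetilde{\Omega}_{\beta,\alpha}\cap B$ with $B$ an SFT does the job; the paper runs the same idea as a contradiction argument with explicit indices.

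The reverse direction has a genuine gap. You propose to show that if $\Omega_{\beta,\alpha}$ is a SFT then $\tau^{\pm}_{\beta,\alpha}(p)$ are periodic by rerunning the zero-fullness argument, and you correctly flag that the witnesses must ``survive inside the smaller space $\Omega_{\beta,\alpha}$''. They do not, and no transport argument can make them: writing $\nu=\sigma(\tau^{-}_{\beta,\alpha}(p))=\tau^{-}_{\beta,\alpha}(1)$, the zero-fullness witnesses are the words $(\nu_{1},\dots,\nu_{k-1},1)$ with $\nu_{k}=0$, and every infinite extension of such a word agrees with $\tau^{-}_{\beta,\alpha}(1)$ up to position $k-1$ and then exceeds it at position $k$, hence is $\succ\tau^{-}_{\beta,\alpha}(1)$. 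Since by Theorem~\ref{thm:Structure} every element of $\Omega_{\beta,\alpha}$ is $\preceq\tau^{-}_{\beta,\alpha}(1)$, these words lie in $\widetilde{\Omega}^{-}_{\beta,\alpha}\lvert^{*}$ but provably \emph{not} in $\Omega_{\beta,\alpha}\lvert^{*}$; the facts that $\tau^{-}_{\beta,\alpha}(p)\in\Omega^{-}_{\beta,\alpha}$ and that $\Omega_{\beta,\alpha}$ is shift invariant only give membership of the unmodified prefixes of $\nu$, not of the prefixes with the flipped final letter. So the pair of words you exhibit violates the SFT condition for $\widetilde{\Omega}_{\beta,\alpha}$ but not for $\Omega_{\beta,\alpha}$, and the implication you need is left unproved. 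The paper avoids the periodicity detour entirely in this direction: it takes a minimal finite set $F$ of forbidden words for $\Omega_{\beta,\alpha}$, observes that each $\xi\in F$ satisfies one of $\xi\prec\tau^{+}_{\beta,\alpha}(0)\lvert_{\lvert\xi\rvert}$, $\xi\succ\tau^{-}_{\beta,\alpha}(1)\lvert_{\lvert\xi\rvert}$, or $\tau^{-}_{\beta,\alpha}(p)\lvert_{\lvert\xi\rvert}\prec\xi\prec\tau^{+}_{\beta,\alpha}(p)\lvert_{\lvert\xi\rvert}$, and shows via Theorem~\ref{thm:Structure} that the (finite) subfamily satisfying the last, inner-gap condition is already a forbidden set for $\widetilde{\Omega}_{\beta,\alpha}$. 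You would need to replace your argument for this implication by one of this kind.
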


\begin{proof}
We first prove the forward direction: if the space $\widetilde{\Omega}_{\beta, \alpha}$ is a SFT, then $\Omega_{\beta, \alpha}$ is a SFT.  We proceed by way of contradiction.  Suppose that $\widetilde{\Omega}_{\beta, \alpha}$ is a SFT, but $\Omega_{\beta, \alpha}$ is not a SFT.   Under this assumption, Proposition~\ref{prop:ESFTP} implies that $\tau_{\beta, \alpha}^{\pm}(p)$ are both periodic and we may assume, without loss of generality, that they have the same period length $m$.  Moreover, by Corollary~\ref{cor:shift_invarance}, since, by assumption, $\Omega_{\beta, \alpha}$ is not a SFT, there exist $\omega = (\omega_{1}, \omega_{2}, \dots, \omega_{n}), \xi = (\xi_{1}, \xi_{2}, \dots, \xi_{k}) \in \Omega_{\beta, \alpha}\lvert^{*}$, with  $n, k \in \mathbb{N}$ and $n \geq m$ such that
\begin{align*}
(\omega_{n - m + 1}, \omega_{n - m + 2}, \dots, \omega_{n}, \xi_{1}, \xi_{2}, \dots, \xi_{k}) \in \Omega_{\beta, \alpha}\lvert^{*}
\; \text{but} \;
(\omega_{1}, \omega_{2}, \dots, \omega_{n}, \xi_{1}, \xi_{2}, \dots, \xi_{k}) \not\in \Omega_{\beta, \alpha}\lvert^{*}.
\end{align*}
In particular, as $\widetilde{\Omega}_{\beta, \alpha}$ is a SFT, Theorem~\ref{thm:Structure} implies that there exists an $l \in \{ 0, 1, \dots, n-1\}$ such that either
\begin{align*}
(\omega_{l + 1}, \omega_{l + 2}, \dots, \omega_{n}, \xi_{1}, \xi_{2}, \dots, \xi_{k}) \prec \tau_{\beta, \alpha}^{+}(0)\lvert_{m + n - l} = \sigma(\tau_{\beta, \alpha}^{+}(p))\lvert_{k + n - l}
\end{align*}
or
\begin{align*}
(\omega_{l + 1}, \omega_{l + 2}, \dots, \omega_{n}, \xi_{1}, \xi_{2}, \dots, \xi_{k}) \succ \tau_{\beta, \alpha}^{-}(1)\lvert_{m + n - l} = \sigma(\tau_{\beta, \alpha}^{-}(p))\lvert_{k + n - l}.
\end{align*}
However, since $\omega, \xi \in \Omega_{\beta, \alpha}\lvert^{*}$ and since $\tau_{\beta, \alpha}^{\pm}(p)$ are both periodic with period length $m$, it follows that $l + 1 \geq n - m$ contradicting the fact that $(\omega_{n - m + 1}, \dots, \omega_{n}, \xi_{1}, \dots, \xi_{k}) \in \Omega_{\beta, \alpha}\lvert^{*}$.

We will now show the converse: if the space $\Omega_{\beta, \alpha}$ is a SFT, then $\widetilde{\Omega}_{\beta, \alpha}$ is a SFT.   Let $F$ denote the set of forbidden words of $\Omega_{\beta, \alpha}$, and assume that $F$ is the smallest such set.  Then for each $\xi \in F$ either
\begin{align*}
\xi \prec \tau_{\beta, \alpha}^{+}(0)\lvert_{\lvert \xi \rvert}, \quad \xi \succ \tau_{\beta, \alpha}^{-}(1)\lvert_{\lvert \xi \rvert}, \quad \text{or} \quad  \tau_{\beta, \alpha}^{-}(p)\lvert_{\lvert \xi \rvert} \prec \xi \prec \tau_{\beta, \alpha}^{+}(p)\lvert_{\lvert \xi \rvert}.
\end{align*}
We claim that the set $\{ \xi \in F \colon \tau_{\beta, \alpha}^{-}(p)\lvert_{\lvert \xi \rvert} \prec \xi \prec \tau_{\beta, \alpha}^{+}(p)\lvert_{\lvert \xi \rvert} \}$ is a finite set of forbidden words for $\widetilde{\Omega}_{\beta,\alpha}$.  If this is not the case, then we obtain a contradiction to Theorem~\ref{thm:Structure}.
\end{proof}

\begin{proof}[Proof of Theorem~\ref{thm:ESFTP}]
This is an immediate consequence of Propositions~\ref{prop:ESFTP} and \ref{prop:SFText->SFT}.
\end{proof}

Let us conclude this section with the proof of Corollary~\ref{cor:fibersSFT}.

\begin{proof}[Proof of Corollary~\ref{cor:fibersSFT}]
By construction $\pi_{\beta, \alpha} \circ \tau_{\beta, \alpha}^{\pm}$ is the identity function on $[0,1]$ and so $\beta$ is a root of the polynomial
\[
\pi_{\beta, \alpha}(\tau^{+}_{\beta, \alpha}(p)) - \pi_{\beta, \alpha}(\tau^{-}_{\beta, \alpha}(p)) -\pi_{\beta, 0}(\tau^{+}_{\beta, \alpha}(p)) + \pi_{\beta, 0}(\tau^{-}_{\beta, \alpha}(p)).
\]
By the definition of the projection map $\pi_{\beta, \alpha}$, this latter polynomial is independent of $\alpha$ and all of its coefficients belong to the set $\{-1, 0, 1\}$.  An application of Theorem~\ref{thm:Parry1960} and \ref{thm:ESFTP} completes the proof.
\end{proof}

\section{Proof of Theorem~\ref{thm:SFTandTransitive}}\label{sec:TheoremB}

\noindent Having proved Theorem~\ref{thm:ESFTP}, we are now equipped to prove Theorem~\ref{thm:SFTandTransitive}. Before setting out to prove  Theorem~\ref{thm:SFTandTransitive}, let us recall the result of Palmer and Glendinning on the classification of the point $(\beta, \alpha) \in \Delta$ such that $T_{\beta,\alpha}^{+}$, and hence $T_{\beta,\alpha}^{-}$, is transitive.

\begin{definition}
Suppose that $1 < k < n$ are natural numbers with $\mathrm{gcd}(k, n) = 1$.  Let $s, m \in \mathbb{N}$ be such that $0 \leq s < k$ and $n = mk + s$.  For $1 \leq j \leq s$ define $V_{j}$ and $r_{j}$ by $jk = V_{j} s + r_{j}$, where $r_{j}, V_{j} \in \mathbb{N}$ and $0 \leq r_{j} < s$.  Define $h_{j}$ inductively via the formula $V_{j} = h_{1} + h_{2} + \dots + h_{j}$.  We let $D_{k, n}$ denote the set of points $(\beta, \alpha) \in \Delta$, such that $1 < \beta^{n} \leq 2$ and
\[
\frac{1 + \beta (\sum_{j=1}^{s} W_{j} - 1) }{\beta (\beta^{n-1} + \dots + 1)} \leq \alpha \leq \frac{\beta(\sum_{j=1}^{s} W_{j}) -\beta^{n+1} + \beta^{n} + \beta - 1}{\beta(\beta^{n-1} + \dots + 1)}.
\]
Here,
\[
W_{1} \coloneqq \sum_{i = 1}^{V_{1}} \beta^{(V_{s} - i)m + s - 1}
\quad \text{and} \quad
W_{j} \coloneqq \sum_{i = 1}^{h_{j}} \beta^{(V_{s} - V_{j-1} - i)m + s - j},
\]
for $2 \leq j \leq s$.  Further, for each natural number $n > 1$, we define the set $D_{1, n}$ to be the set of points $(\beta, \alpha) \in \Delta$ such that $1 < \beta^{n} \leq 2$ and
\[
\frac{1}{\beta(\beta^{n-1} + \dots + 1)} \leq \alpha \leq \frac{-\beta^{n+1} + \beta^{n} + 2\beta - 1}{\beta(\beta^{n-1} + \dots + 1)}.
\]
(See Figure~\ref{Fig:ParPlot} for an illustration of the regions $D_{k, n}$.)
\end{definition}

\begin{figure}[htbp]
\begin{center}
\scalebox{0.65}{
\includegraphics{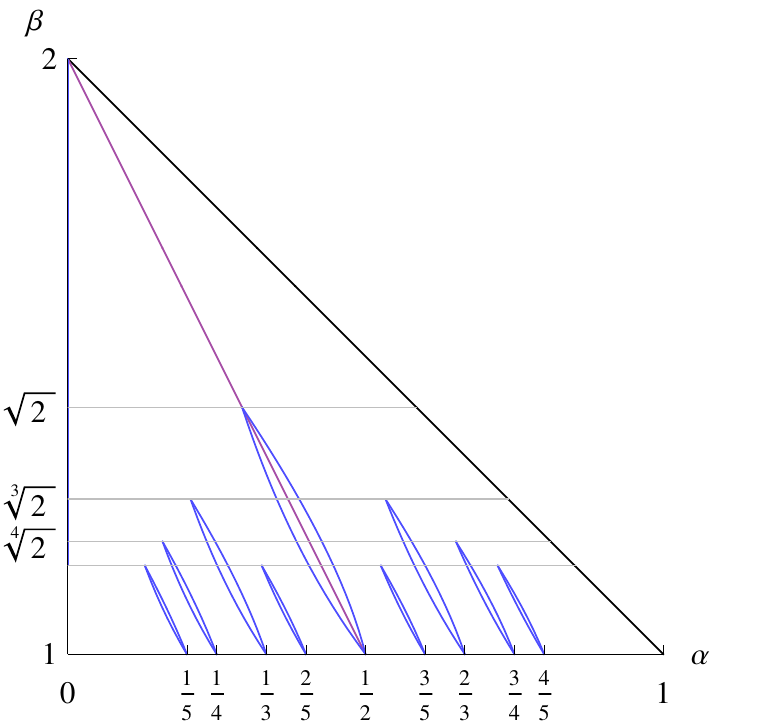}	
	}
\end{center}
\caption{Plot of the parameter space $\Delta$, the line $\alpha = 1 - \beta/2$ and the boundary of the regions $D_{1,2}$, $D_{1,3}$, $D_{2,3}$, $D_{1,5}$, $D_{2,5}$, $D_{3,5}$, $D_{4,5}$.}
\label{Fig:ParPlot}
\end{figure}

\begin{theorem}[{\cite{P:1979} and \cite[Proposition 2]{G:1990}}]\label{thm:Palmer}
If $(\beta,\alpha) \in D_{k, n}$, for some $1\leq k < n$ with $\mathrm{gcd}(k, n) = 1$, then $T_{\beta, \alpha}^{-}$, and hence $T_{\beta, \alpha}^{+}$, is not transitive.
\end{theorem}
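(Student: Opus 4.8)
The plan is to disprove transitivity of $T:=T_{\beta,\alpha}^{-}$ directly, by exhibiting a single open interval whose forward orbit omits a nonempty open set. Recall that transitivity requires $\bigcup_{i=1}^{m}T^{i}(J)=(0,1)$ for \emph{every} open subinterval $J$ and some $m$, so it suffices to produce one bad $J$. The mechanism I would use is a cycle of intervals: suppose we can find a closed interval $K=[a,b]$ with $a<p<b$ whose iterates $K,T(K),\dots,T^{n-1}(K)$ have pairwise disjoint interiors and satisfy $T^{n}(K)\subseteq K$. Put $C:=\bigcup_{j=0}^{n-1}T^{j}(K)$. Then $T(C)=\bigcup_{j=1}^{n}T^{j}(K)\subseteq C$, so $C$ is forward invariant and the forward orbit of any open $J\subseteq\operatorname{int}(K)$ remains inside $C$. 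If moreover $C\neq[0,1]$, its complement contains an open interval $U$, and then $\bigcup_{i\geq 1}T^{i}(J)\subseteq C$ misses $U$, contradicting transitivity. Thus the whole problem reduces to constructing $K$ and verifying the two facts $T^{n}(K)\subseteq K$ and $C\neq[0,1]$.

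Next I would record the geometry that makes such a $K$ available. The map $T$ is an expanding Lorenz map of slope $\beta$ with a single discontinuity at $p=(1-\alpha)/\beta$: it acts as $x\mapsto\beta x+\alpha$ on $[0,p)$ and as $x\mapsto\beta x+\alpha-1$ on $(p,1]$, with $T(p)=1$. The standing hypothesis $1<\beta^{n}\leq 2$ says exactly that the renormalised slope $\beta^{n}$ again lies in $(1,2]$, which is the range in which the first-return map $T^{n}$, restricted to an appropriate interval about $p$, is once more a Lorenz map of the same shape. It is therefore natural to seek $K$ among the intervals bounded by the two points of the period-$n$ orbit that flank $p$, and the coprime pair $(k,n)$ dictates which period-$n$ orbit: it is the one whose itinerary is the rotation (mechanical) word of slope $k/n$, equivalently the orbit for which $T$ permutes the $n$ intervals $T^{j}(K)$ around $[0,1]$ by the rotation $x\mapsto x+k/n$.

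The core of the argument, and the step I expect to be the main obstacle, is to show that the two endpoint conditions securing $T^{n}(K)\subseteq K$ are equivalent to the two displayed inequalities defining $D_{k,n}$. In the kneading language of Theorem~\ref{thm:Structure} these conditions say that $\tau_{\beta,\alpha}^{\pm}(p)$ are, respectively, $\succeq$ and $\preceq$ the rotation word of slope $k/n$; solving the resulting endpoint identities for $\alpha$ is a finite but intricate piece of symbolic bookkeeping, in which the auxiliary integers $s,m,V_{j},r_{j},h_{j}$ encode the cyclic order of the intervals $T^{j}(K)$ and the sums $W_{j}$ are precisely the values $\sum_{i}\omega_{i}\beta^{(\cdots)}$ attached to their endpoints. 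Carrying this out should reproduce exactly the numerators $1+\beta(\sum_{j=1}^{s}W_{j}-1)$ and $\beta(\sum_{j=1}^{s}W_{j})-\beta^{n+1}+\beta^{n}+\beta-1$ over the common denominator $\beta(\beta^{n-1}+\dots+1)=\beta(\beta^{n}-1)/(\beta-1)$. A reassuring consistency check is that these two numerators differ by $(\beta-1)(2-\beta^{n})$, so the $\alpha$-interval is nonempty for $\beta^{n}<2$ and collapses to a point at $\beta^{n}=2$; for $\beta^{n}<2$ this positive width is exactly the gap $C\neq[0,1]$ that powers the trapping argument, while the degenerate tip $\beta^{n}=2$ should be handled as a limiting case by continuity.

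Finally, the clause ``and hence $T_{\beta,\alpha}^{+}$'' requires only a soft remark: the maps $T_{\beta,\alpha}^{+}$ and $T_{\beta,\alpha}^{-}$ agree at every point other than $p$, so for any open $J$ the sets $(T^{+})^{i}(J)$ and $(T^{-})^{i}(J)$ differ by at most one point at each stage and hence their forward orbits differ by at most a countable set. Consequently an open interval $U$ omitted by the $T^{-}$-orbit of $J$ still contains an open subinterval omitted by the $T^{+}$-orbit, so non-transitivity of $T_{\beta,\alpha}^{-}$ forces non-transitivity of $T_{\beta,\alpha}^{+}$.
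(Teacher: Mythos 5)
First, a point of orientation: the paper does not prove Theorem~\ref{thm:Palmer} at all --- it is imported verbatim from Palmer's thesis and Glendinning's Proposition~2 and used as a black box in Section~\ref{sec:TheoremB} --- so your proposal must stand against the cited sources rather than against anything in the text. Your general strategy (exhibit a forward-invariant set whose complement contains an open interval) is the right one, and your closing reduction of $T^{+}_{\beta,\alpha}$ to $T^{-}_{\beta,\alpha}$ is salvageable, though ``countable'' is not enough (a countable set can be dense in $U$); what saves it is that for each \emph{fixed} $m$ the sets $\bigcup_{i\le m}(T^{\pm}_{\beta,\alpha})^{i}(J)$ differ by a finite set, which cannot fill an open gap, and transitivity only quantifies over finite $m$.

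The central mechanism, however, fails as stated. Take $(k,n)=(1,2)$, the very case the paper uses. The period-$2$ orbit flanking $p$ is $\{a,b\}$ with $T(a)=b$, $T(b)=a$, and $K=[a,b]\ni p$; but then
\[
T(K)=T([a,p])\cup T((p,b])=[b,1]\cup(0,a],
\]
so $C=K\cup T(K)=(0,1]$ and its complement contains no open interval: the trapping argument produces nothing. The same telescoping occurs for every $(k,n)$ (e.g.\ for $(1,3)$ one gets $C=(0,T(1)]\cup(T(0),1]=(0,1]$ since $T(1)-T(0)=\beta-1>0$), because each image of $K$ is split at $p$ and reattached at $0$ and $1$. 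The set that actually traps orbits is cut off not by the periodic orbit alone but by the forward orbits of the critical values $T(p^{+})=0$ and $T(p^{-})=1$ relative to that orbit: for $(1,2)$ the open gaps are $G_{1}=(T(1),b)$ and $G_{2}=(T^{2}(1),a)$, one checks $T^{-1}(G_{1}\cup G_{2})\subseteq G_{1}\cup G_{2}$, so $[0,1]\setminus(G_{1}\cup G_{2})$ is forward invariant, and the two inequalities $T(0)=\alpha\ge a$ and $T(1)=\beta+\alpha-1\le b$ that make this work are, after substituting $a=(1-\alpha(\beta+1))/(\beta^{2}-1)$, exactly the two bounds defining $D_{1,2}$. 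This correction also exposes your boundary case honestly: at $\beta^{n}=2$ the gaps close up ($T(0)=a$, $T(1)=b$), the invariant set is all of $(0,1]$, and ``by continuity'' cannot rescue anything because non-transitivity is not a closed condition in the parameter; the tip needs a separate argument. Finally, even granting the corrected mechanism, the identification of the trapping conditions with the displayed formulas for general $(k,n)$ --- the entire quantitative content of the statement --- is only asserted (``should reproduce exactly the numerators''), as is the existence of the period-$n$ orbit of rotation number $k/n$ flanking $p$; as it stands the proposal is a plan built on a trapping set that does not trap.
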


We now present the proof of Theorem~\ref{thm:SFTandTransitive}.

\begin{proof}[Proof of Theorem~\ref{thm:SFTandTransitive}\ref{item:1:Density2}]
Fix $n, k \in \mathbb{N}$ with $n \geq 2$ and recall that $\alpha_{n, k} = (2 - \beta_{n, k})/2$.  Observe that
\begin{equation}\label{eq:1.lower1}
\frac{2 - \beta_{n, k}}{2} \geq \frac{1}{\beta_{n, k}(\beta_{n, k} + 1)}
\quad \text{if and only if} \quad
(2 - \beta^{2}_{n, k})(\beta_{n, k} - 1) \geq 0
\end{equation}
and that
\begin{equation}\label{eq:2.upper2}
\frac{2 - \beta_{n, k}}{2} \leq \frac{-\beta_{n, k}^{3} + \beta_{n, k}^{2} + 2\beta_{n, k} - 1}{\beta_{n, k}(\beta_{n, k} + 1)}
\quad \text{if and only if} \quad (\beta_{n, k}^{2} - 2)(\beta_{n, k} - 1) \leq 0.
\end{equation}
The later inequalities in \eqref{eq:1.lower1} and \eqref{eq:2.upper2} both hold true since $
1 < \beta_{n, k} < (\beta_{n, k})^{2} \leq (\beta_{n, k})^{2^{k}} = \gamma_{n}$ and since $\gamma_{n} \in (1, 2)$.  Therefore, $(\beta_{n, k}, \alpha_{n, k}) \in D_{1, 2}$ and the results follows from an application of Theorem \ref{thm:Palmer}.
\end{proof}

In order to prove Theorem~\ref{thm:SFTandTransitive}\ref{item:2:Density2} we will use the following.  For $n \in \mathbb{N}$ set
\[
\xi_{n, 0}^{-} = (\overline{0, \underbrace{1, 1, \dots, 1}_{n -\text{times}}})
\]
and, for $k \geq 1$, set
\[
\xi_{n, k}^{-} = (\overline{ \xi_{n, k-1}^{-}\lvert_{2^{k-1}}, *(\xi_{n, k-1}^{-}\lvert_{2^{k-1}}), \underbrace{\underbrace{*(\xi_{n, k-1}^{-})\lvert_{2^{k-1}}, \xi_{n, k-1}^{-}\lvert_{2^{k-1}}},\dots, \underbrace{*(\xi_{n, k-1}^{-})\lvert_{2^{k-1}},\xi_{n, k-1}^{-}\lvert_{2^{k-1}}}}_{n-\text{times}}}).
\]
Further, for $n, k \in \mathbb{N}$, we set $\xi_{n, k}^{+} \coloneqq *(\xi_{n, k}^{-})$.  Note that, $\xi_{n, k}^{-}\lvert_{2^{k}} \prec \xi_{n, k}^{+}\lvert_{2^{k}}$, for all $n, k \in \mathbb{N}$.  Observe that if $k \geq 2$ and if $l \in \{ 2, 3, \dots, k \}$, then
\begin{equation}\label{eq:fourblock}
\begin{split}
\xi_{n,k}^{-}\lvert_{2^{k-l+2}} &= \left(\xi_{n,k - l}^{-}\lvert_{2^{k-l}}, *(\xi_{n,k - l}^{-}\lvert_{2^{k-l}}), *(\xi_{n,k - l}^{-}\lvert_{2^{k-l}}), \xi_{n,k - l}^{-}\lvert_{2^{k-l}}\right)\\
\xi_{n,k}^{+}\lvert_{2^{k-l+2}} &= \left(*(\xi_{n,k - l}^{-}\lvert_{2^{k-l}}), \xi_{n,k - l}^{-}\lvert_{2^{k-l}}, \xi_{n,k - l}^{-}\lvert_{2^{k-l}}, *(\xi_{n,k - l}^{-}\lvert_{2^{k-l}})\right).
\end{split}
\end{equation}
Using the above, Theorems~\ref{thm:ESFTP} and \ref{thm:BM2011} together with the results of \cite{BV:2012},  and the following three propositions (Propositions~\ref{prop:(1)}, \ref{prop:(2)}, \ref{prop:(3)}), we obtain Theorem~\ref{thm:SFTandTransitive}\ref{item:2:Density2}.

\begin{proof}[Proof of Theorem~\ref{thm:SFTandTransitive}\ref{item:2:Density2}]
Let $n, k \in \mathbb{N}_{0}$ with $n \geq 2$ be fixed.  By combining Theorems~\ref{thm:ESFTP} and \ref{thm:BM2011} together \cite[Theorem 1]{BV:2012} it suffices to show that for all $n, k \in \mathbb{N}_{0}$ with $n \geq 2$, we have the following assertions.
\begin{enumerate}[label*=(\alph*)]
\item\label{i-nk} The pair $(\xi_{n, k}^{-}, \xi_{n, k}^{+})$ is \textit{self-admissible}, namely that, for all $m \in \mathbb{N}$,
\[
\left( \sigma^{m}(\xi_{n, k}^{-}) \preceq \xi_{n, k}^{-} \; \text{or} \; \sigma^{m}(\xi_{n, k}^{-}) \succ \xi_{n, k}^{+}\right)
\;\; \text{and} \;\;
\left(\sigma^{m}(\xi_{n, k}^{+}) \prec \xi_{n, k}^{-} \; \text{or} \; \sigma^{m}(\xi_{n, k}^{+}) \succeq \xi_{n, k}^{+}\right).
\]
(This is precisely Condition (1) of lex-admissible as defined in \cite{BV:2012}.)
\item\label{ii-nk} The value $\beta_{n, k}$ is the maximal real root of the polynomial
\[
x \mapsto \sum_{m = 1}^{\infty} (\xi_{n,k, m}^{-} - \xi_{n,k, m}^{+}) x^{-m},
\]
where $\xi_{n,k, m}^{\pm}$ denotes the $m$-th letter of the word $\xi_{n, k}^{\pm}$, respectively.  (By \cite[Lemma 3]{BV:2012}, this is Condition (2) of lex-admissible as defined in \cite{BV:2012}.)
\item\label{iii-nk} The following equalities hold:
\[
\pi_{\beta_{n,k}, \alpha_{n,k}}(\xi_{n, k}^{-}) = 1/2 = \pi_{\beta_{n,k}, \alpha_{n,k}}(\xi_{n, k}^{+}).
\]
(By \cite[Lemma 7]{BV:2012}, this is Condition (3) of lex-admissible as given in \cite{BV:2012}.)
\end{enumerate}
However, \ref{i-nk} is precisely the result given in Proposition~\ref{prop:(1)}, \ref{ii-nk} is the result presented in Proposition~\ref{prop:(2)} and \ref{iii-nk} is the result given in Proposition~\ref{prop:(3)}; these propositions, together with their proofs, are presented below.
\end{proof}

\begin{proposition}\label{prop:(1)}
For $n, k \in \mathbb{N}_{0}$ with $n \geq 2$, we have that the pair $(\xi_{n, k}^{-}, \xi_{n, k}^{+})$ is self-admissible.
\end{proposition}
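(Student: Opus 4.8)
The plan is to prove the single clause
\[
\textup{(i)} \qquad \sigma^{m}(\xi_{n,k}^{-}) \preceq \xi_{n,k}^{-} \;\;\text{or}\;\; \sigma^{m}(\xi_{n,k}^{-}) \succ \xi_{n,k}^{+}, \qquad \text{for all } m \in \mathbb{N},
\]
and to obtain the companion clause for $\xi_{n,k}^{+}$ for free. Indeed, the symmetric map $*$ reverses the lexicographic order and commutes with $\sigma$; since $\xi_{n,k}^{+} = *(\xi_{n,k}^{-})$, applying $*$ to (i) term by term turns each disjunct $\sigma^{m}(\xi_{n,k}^{-}) \preceq \xi_{n,k}^{-}$ into $\sigma^{m}(\xi_{n,k}^{+}) \succeq \xi_{n,k}^{+}$ and $\sigma^{m}(\xi_{n,k}^{-}) \succ \xi_{n,k}^{+}$ into $\sigma^{m}(\xi_{n,k}^{+}) \prec \xi_{n,k}^{-}$, which is exactly the second clause of self-admissibility. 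So it suffices to establish (i).

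The engine of the argument is that these words are generated by the Thue--Morse substitution $\psi \colon 0 \mapsto 01,\, 1 \mapsto 10$: one has $\xi_{n,k}^{\pm} = \psi(\xi_{n,k-1}^{\pm})$ for every $k \geq 1$. This I would verify by a short induction from the recursive definition (it is also visible in \eqref{eq:fourblock}): the level-$k$ period $A\,B\,(B A)^{n}$ with $A = \xi_{n,k-1}^{-}\lvert_{2^{k-1}}$ and $B = *(A)$ is the $\psi$-image of the level-$(k-1)$ period, because $\psi$ commutes with $*$ and $\psi(\xi_{n,k-2}^{-}\lvert_{2^{k-2}}) = \xi_{n,k-1}^{-}\lvert_{2^{k-1}}$. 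Two elementary facts about $\psi$ then drive everything: $\psi$ is strictly order-preserving (at the first disagreement the two words read $0$ and $1$, so their images read $01$ and $10$), and $\psi$ has constant length $2$, whence $\sigma^{2}\circ\psi = \psi\circ\sigma$. I would argue by induction on $k$ and split a shift $\sigma^{m}(\xi_{n,k}^{-})$ by the parity of $m$. If $m = 2m'$ is even, then $\sigma^{m}(\xi_{n,k}^{-}) = \psi(\sigma^{m'}(\xi_{n,k-1}^{-}))$, and applying the order-preserving map $\psi$ to the inductive hypothesis (i) at level $k-1$ preserves both disjuncts and yields (i) for the even shift.

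The odd shifts are the crux and are handled directly, without the inductive hypothesis. Writing $\sigma^{m}(\xi_{n,k}^{-}) = \sigma(\psi(u))$ with $u = \sigma^{m'}(\xi_{n,k-1}^{-})$, breaking a $\psi$-block in the middle gives that $\sigma(\psi(u))$ is $\psi(\sigma u)$ with the single symbol $1-u_{1}$ prepended, so it begins with $1-u_{1}$ followed by $\psi(u_{2})$. Examining the four cases for $(u_{1},u_{2})$: if $(u_1,u_2)=(0,1)$ the word begins $11$ and exceeds $\xi_{n,k}^{+}$ at once, and if $(u_1,u_2)=(1,0)$ it begins $00$ and falls below $\xi_{n,k}^{-}$ at once; in the two remaining cases it begins $01$ or $10$, and the comparison is decided at the third letter. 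Concretely, when $(u_1,u_2)=(1,1)$ the shifted word reads $(0,1,0,\dots)$ while $\xi_{n,k}^{-} = \psi(\xi_{n,k-1}^{-})$ reads $(0,1,1,0,\dots)$, so $\sigma(\psi(u)) \prec \xi_{n,k}^{-}$; when $(u_1,u_2)=(0,0)$ the shifted word reads $(1,0,1,\dots)$ while $\xi_{n,k}^{+} = \psi(\xi_{n,k-1}^{+})$ reads $(1,0,0,1,\dots)$, so $\sigma(\psi(u)) \succ \xi_{n,k}^{+}$. The only input needed here is that $\xi_{n,k-1}^{-}$ begins with $01$, which holds for all $n$ because every $\xi_{n,j}^{-}$ starts with $0$ and $\psi(0)=01$.

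For the base case $k=0$, where $\xi_{n,0}^{-} = (\overline{0,1^{n}})$ and $\xi_{n,0}^{+} = (\overline{1,0^{n}})$, I would run through the cyclic rotations: the rotation starting at $0$ is $\xi_{n,0}^{-}$ itself, any rotation beginning $1^{a}$ with $a\geq 2$ already beats $\xi_{n,0}^{+}=(1,0,\dots)$ at the second letter, and the sole delicate rotation $\sigma^{n}(\xi_{n,0}^{-}) = (1,0,1,1,\dots)$ beats $\xi_{n,0}^{+}=(1,0,0,\dots)$ at the third letter precisely because $n\geq 2$. This is the one place where the hypothesis $n \geq 2$ is genuinely used. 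The main obstacle, as indicated, is the odd-shift case: because $\psi$ doubles lengths, an odd shift cuts a substitution block and cannot be pushed through the inductive hypothesis; the resolution is the short local computation above, whose whole content is checking that the third-letter comparison always falls the right way, and this in turn reduces to the single observation that $\xi_{n,k-1}^{-}$ starts with $01$.
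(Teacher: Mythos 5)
Your proof is correct, but it is organized quite differently from the paper's. Both arguments ultimately rest on the same self-similar block structure of $\xi_{n,k}^{\pm}$, and your reduction of the $\xi_{n,k}^{+}$-clause to the $\xi_{n,k}^{-}$-clause via the order-reversing map $*$ is sound. The paper proves the proposition by checking $k=0,1,2$ by hand and then, for $k>2$, running an explicit multi-scale case analysis: shifts $m=4p+l$ with $l\in\{1,2,3\}$ are settled by inspecting the four possible adjacent pairs of the length-$4$ blocks $a$ and $*(a)$; shifts divisible by $2^{k}$ are settled by periodicity; and the remaining shifts $m=2^{k-l}p$ with $p$ odd are settled by an inner downward induction on $l$ using the alignment cases derived from \eqref{eq:fourblock}. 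You instead isolate the substitution $\psi\colon 0\mapsto 01$, $1\mapsto 10$ (which the paper introduces as $\widetilde{\kappa}$ only in the proof of Proposition~\ref{prop:(3)}, and only for prefixes), observe that it is strictly order-preserving and satisfies $\sigma^{2}\circ\psi=\psi\circ\sigma$, and then run a single induction on $k$: even shifts descend to level $k-1$ through $\psi$, and odd shifts are decided within three letters by the local computation on $(u_{1},u_{2})$. This collapses the paper's inner induction and its several alignment cases into one parity split plus a four-entry table, at the cost of establishing the full infinite-word identity $\xi_{n,k}^{\pm}=\psi(\xi_{n,k-1}^{\pm})$ and the order-preservation of $\psi$ as separate lemmas; it also makes transparent exactly where $n\geq 2$ enters, namely in the single rotation $\sigma^{n}(\xi_{n,0}^{-})$ of the base case. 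The items you leave as short verifications --- the substitution identity, the commutation $\psi\circ * = *\circ\psi$ (needed so that $\psi(\xi_{n,k-1}^{+})=\xi_{n,k}^{+}$ in the even-shift step), and the fact that every $\xi_{n,j}^{-}$ begins with $(0,1)$ --- are indeed routine, so the argument is complete.
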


\begin{proof}
Using the observations that
\[
\begin{aligned}
\xi_{n, 0}^{-} &= (\overline{0, \underbrace{1, 1, \dots, 1}_{n -\text{times}}}),
&\xi_{n, 0}^{+} &= (\overline{1, \underbrace{0, 0, \dots, 0}_{n -\text{times}}}),\\
\xi_{n, 1}^{-} &= (\overline{ 0, 1, \hspace{-1.5mm}\underbrace{\underbrace{1 , 0}\hspace{-1.25mm},\dots, \hspace{-1.5mm} \underbrace{1, 0}}_{n-\text{times}}}\hspace{-1mm}),
&\xi_{n, 1}^{+} &= (\overline{ 1, 0, \hspace{-1.5mm}\underbrace{\underbrace{0 , 1}\hspace{-1.25mm},\dots, \hspace{-1.5mm}\underbrace{0, 1}}_{n-\text{times}}}\hspace{-1mm}),\\
\xi_{n, 2}^{-} &= (\overline{ 0, 1, 1, 0, \underbrace{\underbrace{1 , 0, 0, 1},\dots, \underbrace{1, 0, 0, 1}}_{n-\text{times}}}), \qquad
&\xi_{n, 2}^{+} &= (\overline{ 1, 0, 0, 1, \underbrace{\underbrace{0 , 1, 1, 0},\dots, \underbrace{0, 1, 1, 0}}_{n-\text{times}}}),
\end{aligned}
\]
it is easy to verify, using the repeating structure of the words $\xi_{n, k}^{\pm}$, that the pairs $(\xi_{n, 0}^{-}, \xi_{n, 0}^{+})$, $(\xi_{n, 1}^{-}, \xi_{n, 1}^{+})$ and $(\xi_{n, 2}^{-}, \xi_{n, 2}^{+})$ are self-admissible.

We now proceed by induction on $k$.  Assume that $k > 2$ and let $a$ denote the finite word $\xi_{n, 2}^{-}\lvert_{4} = (0, 1, 1, 0)$.  It is elementary to show that, for $m \in \{ 1, 2, 3\}$,
\begin{enumerate}[label*=(\alph*)]
\item $\sigma^{m}(a, a)\prec (a, *(a))\lvert_{8 - m}$ or $\sigma^{m}(a, a)\succ (*(a), a)\lvert_{8 - m}$,
\item $\sigma^{m}(a,*(a))\prec (a, *(a))\lvert_{8 - m}$ or $\sigma^{m}(a, *(a))\succ (*(a), a)\lvert_{8 - m}$,
\item $\sigma^{m}(*(a), *(a))\prec (a, *(a))\lvert_{8 - m}$ or  $\sigma^{m}(*(a), *(a))\succ (*(a), a)\lvert_{8 - m}$ and
\item $\sigma^{m}(*(a), a)\prec (a, *(a))\lvert_{8 - m} $ or $\sigma^{m}(*(a), a)\succ (*(a), a)\lvert_{8 - m}$,
\end{enumerate}
By construction, for $k > 2$, the sequence $\xi_{n,k}^{-}$ is an infinite concatenation of the finite words $a$ and $*(a)$ and, by \eqref{eq:fourblock}, we have that $\xi_{n, k}^{-}\lvert_{8} = (a, *(a))$.  Thus, for any $m = 4p + l$, where $p \in \mathbb{N}_{0}$ and $l \in \{ 1, 2, 3\}$, we have that
\begin{equation*}
\left(\sigma^{m}(\xi_{n, k}^{-}) \preceq \xi_{n, k}^{-} \quad \text{or} \quad \sigma^{m}(\xi_{n, k}^{-}) \succ \xi_{n, k}^{+} \right)
\;\; \text{and} \;\;
\left(\sigma^{m}(\xi_{n, k}^{+}) \preceq \xi_{n, k}^{-} \quad \text{or} \quad \sigma^{m}(\xi_{n, k}^{+}) \succ \xi_{n, k}^{+}\right).
\end{equation*}
Hence, all that remains is to prove, for $k > 2$ and for $m = 4 p$, where $p \in \mathbb{N}$, that
\[
\left(\sigma^{m}(\xi_{n, k}^{-}) \preceq \xi_{n, k}^{-} \quad \text{or} \quad \sigma^{m}(\xi_{n, k}^{-}) \succ \xi_{n, k}^{+}\right)
\quad \text{and} \quad
\left( \sigma^{m}(\xi_{n, k}^{+}) \prec \xi_{n, k}^{-} \quad \text{or} \quad \sigma^{m}(\xi_{n, k}^{+}) \succeq \xi_{n, k}^{+} \right).
\]
Indeed, for $p \in \mathbb{N}$ and $k > 2$,
\[
\begin{split}
&\sigma^{2^{k}p}(\xi_{n,k}^{-})\\
&= \sigma^{2^{k}p}(\overline{\xi_{n,k}^{-}\lvert_{2^{k}}, \underbrace{*(\xi_{n,k}^{-}\lvert_{2^{k}}), \dots, *(\xi_{n,k}^{-}\lvert_{2^{k}})}_{n-\text{times}}})\\
&= \begin{cases}
(\overline{\xi_{n,k}^{-}\lvert_{2^{k}}, \underbrace{*(\xi_{n,k}^{-}\lvert_{2^{k}}), \dots, *(\xi_{n,k}^{-}\lvert_{2^{k}})}_{n-\text{times}}})
& \text{if} \; p = (n+1) m \; \text{and} \; m \in \mathbb{N},\\
\\
(\underbrace{*(\xi_{n,k}^{-}\lvert_{2^{k}}), \dots, *(\xi_{n,k}^{-}\lvert_{2^{k}})}_{((n+1)(m+1)-p)-\text{times}}, \overline{\xi_{n,k}^{-}\lvert_{2^{k}}, \underbrace{*(\xi_{n,k}^{-}\lvert_{2^{k}}), \dots *(\xi_{n,k}^{-}\lvert_{2^{k}}}_{n-\text{times}}}) )
& \text{\parbox{11em}{if $p \in \{(n+1) m +1, \dots,$ $(n+1)(m+1) -1\}$ and $m \in \mathbb{N}$.}}
\end{cases}
\end{split}
\]
Thus by the equality given in \eqref{eq:fourblock} it follows that
\[
\sigma^{2^{k}p}(\xi_{n,k}^{-}) \begin{cases}
= \xi_{n, k}^{-} & \text{if} \; p = (n + 1)m \; \text{for some} \; m \in \mathbb{N},\\
\succ \xi_{n, k}^{+} & \text{otherwise.}
\end{cases}
\]
By a symmetrical argument we obtain that
\[
\sigma^{2^{k}p}(\xi_{n,k}^{+}) \begin{cases}
= \xi_{n, k}^{+} & \text{if} \; p = (n + 1)m \; \text{for some} \; m \in \mathbb{N},\\
\prec \xi_{n, k}^{-} & \text{otherwise.}
\end{cases}
\]
Now assume that for some $l \in \{1, 2, \dots, k - 2\}$ and for all $p \in \mathbb{N}$ that
\[
\left(\sigma^{2^{k-(l-1)} p}(\xi_{n, k}^{-}) \preceq \xi_{n, k}^{-} \; \text{or} \; \sigma^{2^{k-(l-1)} p}(\xi_{n, k}^{-}) \succ \xi_{n, k}^{+}\right)
\]
and that
\[
\left(\sigma^{2^{k-(l-1)} p}(\xi_{n, k}^{+}) \prec \xi_{n, k}^{-} \; \text{or} \; \sigma^{2^{k-(l-1)} p}(\xi_{n, k}^{+}) \succeq \xi_{n, k}^{+}\right).
\]
To complete the proof we will show, for all odd $p \in \mathbb{N}$, that
\[
\left(\sigma^{2^{k-l} p}(\xi_{n, k}^{-}) \preceq \xi_{n, k}^{-} \;\;\; \text{or} \;\;\; \sigma^{2^{k-l} p}(\xi_{n, k}^{-}) \succ \xi_{n, k}^{+} \right)
\]
and
\[
\left( \sigma^{2^{k-l} p}(\xi_{n, k}^{+}) \prec \xi_{n, k}^{-} \;\;\; \text{or} \;\;\; \sigma^{2^{k-l} p}(\xi_{n, k}^{+}) \succeq \xi_{n, k}^{+}\right).
\]
To this end observe that by construction, the words $\xi_{n, k}^{-}$ and $\xi_{n, k}^{+}$ are made up of the finite words $\xi_{n, k-(l-1)}^{-}\lvert_{2^{k-(l-1)}}$ and $*(\xi_{n, k-(l-1)}^{-}\lvert_{2^{k-(l-1)}})$ and moreover, that
\[
\xi_{n, k-(l-1)}^{-}\lvert_{2^{k-(l-1)}} = (\xi_{n, k - l}^{-}\lvert_{2^{k - l}}, *(\xi_{n, k - l}^{-}\lvert_{2^{k - l}})).
\]
Hence, we have one the following cases.
\begin{enumerate}[label*=(\alph*)]\setcounter{enumi}{4}
\item\label{case(a)A*(A)} $\sigma^{2^{k-l} p}(\xi_{n, k}^{-})\lvert_{3 \cdot 2^{k-l}} = (*(\xi_{n, k-l}^{-}\lvert_{2^{k-l}}), *(\xi_{n, k-l}^{-}\lvert_{2^{k-l}}), \xi_{n, k-l}^{-}\lvert_{2^{k-l}})$\\
$\sigma^{2^{k-l} p}(\xi_{n, k}^{+})\lvert_{3 \cdot 2^{k-l}} = (\xi_{n, k-l}^{-}\lvert_{2^{k-l}}, \xi_{n, k-l}^{-}\lvert_{2^{k-l}}, *(\xi_{n, k-l}^{-}\lvert_{2^{k-l}}))$
\item\label{case(b)AA} $\sigma^{2^{k-l} p}(\xi_{n, k}^{-})\lvert_{3 \cdot 2^{k-l}} = (*(\xi_{n, k-l}^{-}\lvert_{2^{k-l}}),\xi_{n, k-l}^{-}\lvert_{2^{k-l}}), *(\xi_{n, k-l}^{-}\lvert_{2^{k-l}}))$\\ 
$\sigma^{2^{k-l} p}(\xi_{n, k}^{+})\lvert_{3 \cdot 2^{k-l}} = (\xi_{n, k-l}^{-}\lvert_{2^{k-l}},*(\xi_{n, k-l}^{-}\lvert_{2^{k-l}}), \xi_{n, k-l}^{-}\lvert_{2^{k-l}})$
\item\label{case(c)*(A)A} $\sigma^{2^{k-l} p}(\xi_{n, k}^{-})\lvert_{3 \cdot 2^{k-l}} = (\xi_{n, k-l}^{-}\lvert_{2^{k-l}}, \xi_{n, k-l}^{-}\lvert_{2^{k-l}}, *(\xi_{n, k-l}^{-}\lvert_{2^{k-l}}))$\\ 
$\sigma^{2^{k-l} p}(\xi_{n, k}^{+})\lvert_{3 \cdot 2^{k-l}} = (*(\xi_{n, k-l}^{-}\lvert_{2^{k-l}}), *(\xi_{n, k-l}^{-}\lvert_{2^{k-l}}), \xi_{n, k-l}^{-}\lvert_{2^{k-l}})$
\item\label{case(d)*(A)*(A)} $\sigma^{2^{k-l} p}(\xi_{n, k}^{-})\lvert_{3 \cdot 2^{k-l}} = (\xi_{n, k-l}^{-}\lvert_{2^{k-l}}, *(\xi_{n, k-l}^{-}\lvert_{2^{k-l}}), \xi_{n, k-l}^{-}\lvert_{2^{k-l}})$\\ 
$\sigma^{2^{k-l} p}(\xi_{n, k}^{+})\lvert_{3 \cdot 2^{k-l}} = (*(\xi_{n, k-l}^{-}\lvert_{2^{k-l}}), \xi_{n, k-l}^{-}\lvert_{2^{k-l}}, *(\xi_{n, k-l}^{-}\lvert_{2^{k-l}}))$
\end{enumerate}
By the equality given in \eqref{eq:fourblock}, if \ref{case(a)A*(A)} or \ref{case(b)AA} occurs, then $\sigma^{2^{k-l} p}(\xi_{n, k}^{-}) \succ \xi_{n, k}^{+}$ and $\sigma^{2^{k-l} p}(\xi_{n, k}^{+}) \prec \xi_{n, k}^{-}$; otherwise, if \ref{case(c)*(A)A} or \ref{case(d)*(A)*(A)} occurs, then $\sigma^{2^{k-l} p}(\xi_{n, k}^{-}) \prec \xi_{n, k}^{-}$ and $\sigma^{2^{k-l} p}(\xi_{n, k}^{+}) \succ \xi_{n, k}^{-}$.  This completes the proof.
\end{proof}

\begin{proposition}\label{prop:(2)}
For $n, k \in \mathbb{N}_{0}$ with $n \geq 2$, the value $\beta_{n, k}$ is the maximal real root of the polynomial
\[
x \mapsto \sum_{m = 1}^{\infty} (\xi_{n,k, m}^{-} - \xi_{n,k, m}^{+}) x^{-m}.
\]
\end{proposition}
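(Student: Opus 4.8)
The plan is to compute the generating function of the coefficient sequence explicitly, exploiting the self-similar (period-doubling) structure of the words $\xi_{n,k}^{\pm}$, and then to read off its largest real root by an elementary monotonicity argument. First I would reduce the coefficients to signs. Since $\xi_{n,k}^{+} = *(\xi_{n,k}^{-})$ and $*$ acts letterwise by $s \mapsto 1-s$, we have $\xi_{n,k,m}^{-} - \xi_{n,k,m}^{+} = 2\xi_{n,k,m}^{-} - 1 \in \{-1,+1\}$ for every $m$. Writing $c_{m} \coloneqq 2\xi_{n,k,m}^{-} - 1$ and $g_{n,k}(x) \coloneqq \sum_{m=1}^{\infty} c_{m}x^{-m}$, the word $\xi_{n,k}^{-}$ is periodic with period $(n+1)2^{k}$, so $g_{n,k}(x) = (1 - x^{-(n+1)2^{k}})^{-1} P_{n,k}(x)$, where $P_{n,k}$ is the finite sum of $c_{m}x^{-m}$ over a single period. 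For $\lvert x \rvert > 1$ the denominator does not vanish and the series converges, so the real roots of $g_{n,k}$ exceeding $1$ coincide with those of $P_{n,k}$.

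The key step is to factor $P_{n,k}$ using the nested definition of $\xi_{n,k}^{-}$. Set $A_{k} \coloneqq \xi_{n,k}^{-}\lvert_{2^{k}}$. Directly from the defining formula (equivalently from \eqref{eq:fourblock} with $l = 1$), the first two blocks of a period give $A_{k} = (A_{k-1}, *(A_{k-1}))$, while a full period equals $A_{k}$ followed by $n$ copies of $*(A_{k})$. Passing to the sign sequence, complementation negates, so if $Q_{k}(x)$ denotes the length-$2^{k}$ polynomial $\sum_{m=1}^{2^{k}} c_{m}x^{-m}$ associated with $A_{k}$, the relation $A_{k} = (A_{k-1}, *(A_{k-1}))$ yields $Q_{k}(x) = (1 - x^{-2^{k-1}})Q_{k-1}(x)$, and hence, since $A_{0} = (0)$ gives $Q_{0}(x) = -x^{-1}$,
\[
Q_{k}(x) = -x^{-1}\prod_{j=0}^{k-1}\bigl(1 - x^{-2^{j}}\bigr).
\]
The block decomposition of a period then gives $P_{n,k}(x) = Q_{k}(x)\bigl(1 - \sum_{i=1}^{n} x^{-i2^{k}}\bigr)$.

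Finally I would locate the maximal real root. Substituting $u = x^{-2^{k}}$ into the factor $1 - \sum_{i=1}^{n} x^{-i2^{k}}$ and recalling $\beta_{n,k}^{2^{k}} = \gamma_{n}$, one checks $\sum_{i=1}^{n}\beta_{n,k}^{-i2^{k}} = \sum_{i=1}^{n}\gamma_{n}^{-i} = 1$, so $\beta_{n,k}$ is a root of this factor and thus of $g_{n,k}$ (note $\beta_{n,k} > 1$, so it is neither a pole of $g_{n,k}$ nor a zero of $Q_{k}$). For maximality, observe that for real $x > 1$ each factor $1 - x^{-2^{j}}$ is positive, so $Q_{k}(x) < 0$ and contributes no root; meanwhile $\psi(x) \coloneqq \sum_{i=1}^{n} x^{-i2^{k}}$ is continuous and strictly decreasing on $(1,\infty)$ with $\psi(1^{+}) = n \geq 2 > 1$ and $\psi(x) \to 0$, so $\psi(x) = 1$ has the unique solution $x = \beta_{n,k}$ there; for $x > \beta_{n,k}$ we get $1 - \psi(x) > 0$, whence $P_{n,k}(x) < 0$. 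Therefore $\beta_{n,k}$ is the largest real root of $g_{n,k}$.

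I expect the main obstacle to be the bookkeeping in the second step: establishing the self-similar identities $A_{k} = (A_{k-1}, *(A_{k-1}))$ and ``period $= A_{k}\,(*(A_{k}))^{n}$'' rigorously from the layered definition of $\xi_{n,k}^{-}$, and tracking how complementation interacts with the restriction $\lvert_{2^{k}}$. Once the factorisation $P_{n,k}(x) = Q_{k}(x)\bigl(1 - \sum_{i=1}^{n} x^{-i2^{k}}\bigr)$ is in hand, the identification of the root and its maximality are elementary and, pleasingly, require no appeal to the Pisot property of $\gamma_{n}$, only the monotonicity of $\psi$.
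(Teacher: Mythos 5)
Your proposal is correct and takes essentially the same route as the paper: both factor the one-period sum as the block polynomial $Q_{k}$ times the multinacci factor $1-\sum_{i=1}^{n}x^{-i2^{k}}$, show $Q_{k}(x)<0$ for $x>1$ via the recursion $Q_{k}=(1-x^{-2^{k-1}})Q_{k-1}$, and identify $\beta_{n,k}$ as the maximal real root of the multinacci factor. Your closed-form product for $Q_{k}$ and the monotonicity argument for $\psi$ are just more explicit versions of the paper's induction and its appeal to the definition of the multinacci number.
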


\begin{proof}
Observe, by the recursive definition of the infinite words $\xi_{n,k}^{\pm}$, that
\begin{align*}
\sum_{m = 1}^{\infty} (\xi_{n,k, m}^{-} - \xi_{n,k, m}^{+}) x^{-m}
&= \frac{x^{(n+1) 2^{k}}}{x^{(n+1) 2^{k}} - 1} \sum_{m = 1}^{(n+1) 2^{k}} (\xi_{n,k, m}^{-} - \xi_{n,k, m}^{+}) x^{-m}\\
&= \frac{x^{2^{k}} ( x^{n 2^{k}} - x^{(n-1) 2^{k}} - \dots - x^{2^{k}} - 1 )}{x^{(n+1) 2^{k}} - 1} \sum_{m = 1}^{2^{k}} (\xi_{n,k, m}^{-} - *(\xi_{n,k, m}^{-})) x^{-m}.
\end{align*}
This latter term is equal to zero if and only if either
\begin{align*}
x^{2^{k}} = 0,
\quad
x^{n 2^{k}} - x^{(n-1) 2^{k}} - \dots - x^{2^{k}} - 1 = 0
\quad \text{or} \quad
\sum_{m = 1}^{2^{k}} (\xi_{n,k, m}^{-} - *(\xi_{n,k, m}^{-})) x^{-m} = 0.
\end{align*}
However, $x^{2^{k}} > 0$ for all $x > 1$, and, by the definition of a multinacci number, the maximal real solution of $x^{n 2^{k}} - x^{(n-1) 2^{k}} - \dots - x^{2^{k}} - 1 = 0$ is $x = \beta_{n, k}$.  Further, we claim that
\[
\sum_{m = 1}^{2^{k}} (\xi_{n,k, m}^{-} - *(\xi_{n,k, m}^{-})) x^{-m} < 0,
\]
for all $x > 1$.  Given this claim, which we will shortly prove, the result follows.

To prove the claim we proceed by induction on $k$.  If $k = 0$, then
\[
\sum_{m = 1}^{2^{k}} (\xi_{n,k, m}^{-} - *(\xi_{n,k, m}^{-})) x^{-m} = -x^{-1}.
\]
Suppose that the statement is true for some $k \in \mathbb{N}_{0}$, then
\begin{align*}
\sum_{m = 1}^{2^{k+1}} (\xi_{n,k+1, m}^{-} - *(\xi_{n,k+1, m}^{-})) x^{-m} &=
\sum_{m = 1}^{2^{k}} (\xi_{n,k, m}^{-} - *(\xi_{n,k, m}^{-})) x^{-m} +
x^{-2^{k}} \sum_{m = 1}^{2^{k}} (*(\xi_{n,k, m}^{-}) - \xi_{n,k, m}^{-}) x^{-m}\\
&= \left( 1 - x^{-2^{k}} \right) \sum_{m = 1}^{2^{k}} (\xi_{n,k, m}^{-} - *(\xi_{n,k, m}^{-})) x^{-m}.
\end{align*}
Since  $x > 1$ the value of the term $1 - x^{-2^{k}}$ is positive and finite and by our inductive hypothesis
\[
\sum_{m = 1}^{2^{k}} (\xi_{n,k, m}^{-} - *(\xi_{n,k, m}^{-})) x^{-m} < 0.
\]
This completes the proof of the claim.
\end{proof}

\begin{proposition}\label{prop:(3)}
For all integers $n, k \in \mathbb{N}$ with $n \geq 2$, we have
\[
\pi_{\beta_{n,k}, \alpha_{n,k}}(\xi_{n, k}^{-}) = 1/2 = \pi_{\beta_{n,k}, \alpha_{n,k}}(\xi_{n, k}^{+}).
\]
\end{proposition}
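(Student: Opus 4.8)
The plan is to exploit the two structural facts already available: the pointwise symmetry $\xi_{n,k}^{+} = *(\xi_{n,k}^{-})$ built into the definition, and the root property established in Proposition~\ref{prop:(2)}. Writing $\beta = \beta_{n,k}$ and $\alpha = \alpha_{n,k} = 1 - \beta/2$ for brevity, and recalling that
\[
\pi_{\beta, \alpha}(\omega_{1}, \omega_{2}, \dots) = \alpha(1 - \beta)^{-1} + \sum_{m = 1}^{\infty} \omega_{m} \beta^{-m},
\]
I would first observe that, since all series involved converge absolutely for $\beta > 1$, it suffices to compute the two quantities $\pi_{\beta, \alpha}(\xi_{n, k}^{-}) + \pi_{\beta, \alpha}(\xi_{n, k}^{+})$ and $\pi_{\beta, \alpha}(\xi_{n, k}^{-}) - \pi_{\beta, \alpha}(\xi_{n, k}^{+})$ separately and then solve the resulting linear system.

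For the sum, I would use that $*(\xi_{n,k}^{-})_{m} = 1 - \xi_{n,k,m}^{-}$ at every coordinate, so that $\xi_{n,k,m}^{-} + \xi_{n,k,m}^{+} = 1$ for all $m$. Hence
\[
\pi_{\beta, \alpha}(\xi_{n, k}^{-}) + \pi_{\beta, \alpha}(\xi_{n, k}^{+}) = 2\alpha(1-\beta)^{-1} + \sum_{m = 1}^{\infty} \beta^{-m} = 2\alpha(1-\beta)^{-1} + (\beta - 1)^{-1}.
\]
Substituting $\alpha = 1 - \beta/2$ collapses this to $((2-\beta) - 1)(1-\beta)^{-1} = 1$, which is the elementary computation carried out in the proof of Theorem~\ref{thm:SFTandTransitive}\ref{item:1:Density2}. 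Thus the two projections sum to exactly $1$.

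For the difference, the key point is that the $\alpha(1-\beta)^{-1}$ terms cancel, leaving
\[
\pi_{\beta, \alpha}(\xi_{n, k}^{-}) - \pi_{\beta, \alpha}(\xi_{n, k}^{+}) = \sum_{m = 1}^{\infty} (\xi_{n,k,m}^{-} - \xi_{n,k,m}^{+}) \beta^{-m},
\]
which is precisely the polynomial of Proposition~\ref{prop:(2)} evaluated at $x = \beta_{n,k}$. Since Proposition~\ref{prop:(2)} identifies $\beta_{n,k}$ as a root of that function, this difference is $0$, so the two projections coincide. Combining the sum equal to $1$ with the difference equal to $0$ forces each projection to equal $1/2$, which is the claim. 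I do not anticipate a genuine obstacle here: the argument is a two-line linear system once the inputs are in place, and the only care needed is to justify termwise manipulation of the series (immediate from absolute convergence for $\beta > 1$) and to cite Proposition~\ref{prop:(2)} correctly for the vanishing of the difference.
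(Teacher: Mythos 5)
Your proof is correct, but it takes a genuinely different route from the paper's. The paper argues by induction on $k$: it first establishes the substitution identity $\widetilde{\kappa}(\xi_{n, k-1}^{\pm}\lvert_{2^{k-1}}) = \xi_{n, k}^{\pm}\lvert_{2^{k}}$ for the doubling substitution $\kappa(0)=(0,1)$, $\kappa(1)=(1,0)$, computes the base case $k=0$ directly from the multinacci relation $1 = \beta_{n,0}^{-1}+\dots+\beta_{n,0}^{-n}$, and then propagates the value $1/2$ through the substitution using $\beta_{n,k}^{2}=\beta_{n,k-1}$. You instead solve a $2\times 2$ linear system for the pair $\bigl(\pi_{\beta,\alpha}(\xi_{n,k}^{-}),\pi_{\beta,\alpha}(\xi_{n,k}^{+})\bigr)$: the coordinatewise symmetry $\xi_{n,k}^{+}=*(\xi_{n,k}^{-})$ gives $\xi_{n,k,m}^{-}+\xi_{n,k,m}^{+}=1$ for every $m$, hence the sum of the two projections equals $2\alpha(1-\beta)^{-1}+(\beta-1)^{-1}=1$ after substituting $\alpha=1-\beta/2$, while Proposition~\ref{prop:(2)} gives that the difference $\sum_{m}(\xi_{n,k,m}^{-}-\xi_{n,k,m}^{+})\beta_{n,k}^{-m}$ vanishes because $\beta_{n,k}$ is a root of that series. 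There is no circularity, since Proposition~\ref{prop:(2)} is proved independently of Proposition~\ref{prop:(3)}, and the termwise manipulations are justified by absolute convergence for $\beta>1$. Your argument is shorter and avoids the substitution machinery entirely; what the paper's route buys is that $\widetilde{\kappa}$ makes the self-similar structure of the words $\xi_{n,k}^{\pm}$ explicit, which is reused implicitly elsewhere in the analysis of these kneading sequences. One small slip worth flagging: the sum computation is not the one carried out in the proof of Theorem~\ref{thm:SFTandTransitive}\ref{item:1:Density2} (that proof verifies membership in $D_{1,2}$ via different inequalities), but the identity you need is immediate in any case.
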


This proposition yields that, if there exists an intermediate $\beta$-transformation $T^{\pm}$ with intermediate $\beta$-shift determined by $\xi_{n, k}^{\pm}$, then the points of discontinuity of $T^{\pm}$ are at $1/2$.

\begin{proof}[Proof of Proposition~\ref{prop:(3)}]
For the proof of this result we require the following.  Define $\kappa \colon \{ 0, 1\} \to \{ (0, 1), (1, 0) \}$ by $\kappa(0) \coloneqq (0, 1)$ and $\kappa(1) \coloneqq (1, 0)$.  Let $\widetilde{\kappa}$ be the substitution map defined on the set of all finite words in the alphabet $\{ 0, 1 \}$ by
\[
\widetilde{\kappa}(\omega_{1}, \omega_{2}, \dots, \omega_{m} ) \coloneqq (\kappa(\omega_{1}), \kappa(\omega_{2}), \dots, \kappa(\omega_{m})).
\]
We claim that 
\begin{equation}\label{eq:kappa}
\widetilde{\kappa}(\xi_{n, k-1}^{\pm}\lvert_{2^{k-1}}) = \xi_{n, k}^{\pm}\lvert_{2^{k}},
\end{equation}
for all $n, k \in \mathbb{N}$.  We will shortly prove the claim, then using \eqref{eq:kappa} we prove via an inductive argument the statement of the proposition.

Fix $n \in \mathbb{N}$.  We now prove \eqref{eq:kappa} by induction on $k$.  By definition we have
\[
\begin{aligned}
\xi_{n, 0}^{-} &= (\overline{0, \underbrace{1, 1, \dots, 1}_{n -\text{times}}}),
&\xi_{n, 0}^{+} &= (\overline{1, \underbrace{0, 0, \dots, 0}_{n -\text{times}}}),\\
\xi_{n, 1}^{-} &= (\overline{ 0, 1, \hspace{-1.5mm}\underbrace{\underbrace{1, 0}\hspace{-1.5mm},\dots, \hspace{-1.5mm}\underbrace{1, 0}}_{n-\text{times}}}\hspace{-0.75mm}),
&\xi_{n, 1}^{+} &= (\overline{ 1, 0, \hspace{-1.5mm}\underbrace{\underbrace{0 , 1}\hspace{-1.5mm},\dots, \hspace{-1.5mm} \underbrace{0, 1}}_{n-\text{times}}}\hspace{-0.75mm}).
\end{aligned}
\]
This completes the proof of the base case of the induction.  So suppose that \eqref{eq:kappa} holds true for all integers $m < k$, for some integer $k > 1$.  Then, by the definition of $\xi_{n, k}$, the definition of $\widetilde{\kappa}$ and the inductive hypothesis, we have that
\[
\begin{aligned}
\widetilde{\kappa}(\xi_{n, k-1}^{\pm}\lvert_{2^{k-1}})
	&= \widetilde{\kappa}(\xi_{n, k-2}^{\pm}\lvert_{2^{k-2}}, *(\xi_{n, k-2}^{\pm}\lvert_{2^{k-2}}))\\
	&= (\widetilde{\kappa}(\xi_{n, k-2}^{\pm}\lvert_{2^{k-2}}), \widetilde{\kappa}(*(\xi_{n, k-2}^{\pm}\lvert_{2^{k-2}})))\\
	&= ((\xi_{n,k-1}^{\pm}\lvert_{2^{k-1}}, *(\xi_{n,k-1}^{\pm}\lvert_{2^{k-1}}) )\lvert_{2^{k-1}}, (*(\xi_{n,k-1}^{\pm}\lvert_{2^{k-1}})\lvert_{2^{k-1}}, \xi_{n,k-1}^{\pm}\lvert_{2^{k-1}})\lvert_{2^{k-1}})\\
	&= (\xi_{n,k-1}^{\pm}\lvert_{2^{k-1}}, *(\xi_{n,k-1}^{\pm}\lvert_{2^{k-1}}))\\
	&= \xi_{n,k}^{\pm}\lvert_{2^{k}}.
\end{aligned}
\]
This completes the induction and hence the proof of the claim.

For a fixed integer $n \geq 2$, we now prove, via an inductive argument on $k$, the statement of the proposition.  Recall that $\alpha_{n, k} = 1 - \beta_{n, k}/2$ and observe, for $k = 0$, by the definition of the projection map $\pi_{\beta_{n, 0}, \alpha_{n, 0}}$, that
\[
\pi_{\beta_{n, 0}, \alpha_{n, 0}}(\xi_{n, 0}^{+})
	= \frac{1 - \beta_{n,0}/2}{1 - \beta_{n, 0}} + \frac{1}{\beta_{n, 0}} \sum_{m = 0}^{\infty} \frac{1}{{\beta_{n,0}}^{(n+1) m}}
	= \frac{1 - \beta_{n, 0}/2}{1 - \beta_{n, 0}} + \frac{{\beta_{n, 0}}^{n}}{{\beta_{n, 0}}^{n+1} - 1}
	= \frac{1}{2}.
\]
The last equality follows by using the fact that $\beta_{n, 0}$ is the multinacci number of order $n$ and so $1 = {\beta_{n, 0}}^{-1} + {\beta_{n, 0}}^{-2} + \dots + {\beta_{n, 0}}^{-n}$.  Further, we note that
\[
\pi_{\beta_{n, 0}, \alpha_{n, 0}}(\xi_{n, 0}^{-})
	= \frac{1 - \beta_{n,0}/2}{1 - \beta_{n, 0}} + \frac{1}{\beta_{n,0}}\left(\frac{1}{{\beta_{n,0}}^{1}}  + \dots + \frac{1}{{\beta_{n,0}}^{n}}\right) \sum_{m = 1}^{\infty} \frac{1}{{\beta_{n,0}}^{(n+1) m}}
	= \pi_{\beta_{n, 0}, \alpha_{n, 0}}(\xi_{n, 0}^{+}).
\]
This completes the proof of the base case of the induction.  So suppose the statement of the proposition holds true for all integers $m < k$, for some $k \in \mathbb{N}$.  By the definition of the projection map $\pi_{\beta_{n, k}, \alpha_{n, k}}$, the fact that ${\beta_{n, k}}^{2} = \beta_{n, k-1}$ and \eqref{eq:kappa}, we have
\[
\begin{aligned}
\pi_{\beta_{n, k}, \alpha_{n, k}}(\xi_{n, k}^{\pm})
&= \beta_{n, k} \pi_{\beta_{n, k-1}}(\xi_{n, k-1}^{\pm}) + \pi_{\beta_{n, k-1}}(\xi_{n, k-1}^{\mp}) - \frac{(\beta_{n, k} + 1)(1 - {\beta_{n, k}}^{2}/2)}{1 - {\beta_{n, k}}^{2}} + \frac{1 - \beta_{n, k}/2}{1 - \beta_{n, k}}\\
&= \frac{\beta_{n,k} + 1}{2} - \frac{1 - {\beta_{n, k}}^{2}/2}{1 - \beta_{n, k}} + \frac{1 - \beta_{n, k}/2}{1 - \beta_{n, k}}\\
&= \frac{(1 - {\beta_{n, k}}^{2}) + {\beta_{n, k}}^{2} - \beta_{n, k}}{2(1 - \beta_{n, k})} = \frac{1}{2}.
\end{aligned}
\]
This completes the induction.
\end{proof}

\begin{proof}[Proof of Theorem~\ref{thm:SFTandTransitive}\ref{item:4:Density2}]
This is a direct consequence of Theorem~\ref{thm:Palmer}.
\end{proof}

For the proof of Theorem~\ref{thm:SFTandTransitive}\ref{item:3:Density2} we will require the following additional preliminaries.
\begin{enumerate}[label*=(\alph*)]
\item A \textit{Pisot number} is a positive real number $\beta$ whose Galois conjugates all have modulus strictly less than $1$.  A \textit{Perron number} is a positive real number $\beta$ whose Galois conjugates all have modulus strictly less than $\beta$.
\item For $n \geq 2$, the $n$-th multinacci number is a Pisot number (see \cite[Example 3.3.4]{DK:2002}).
\item If the space $\Omega_{\beta, 0}$ is sofic, then $\beta$ is a Perron number (see \cite[Theorem 2.2(3)]{S:2003}); here the result is attributed to \cite{B-M:1986,D:1984,P:1960}.
\end{enumerate}

\begin{proof}[Proof of Theorem~\ref{thm:SFTandTransitive}\ref{item:3:Density2}]
Since, by definition, any SFT is sofic, it is sufficient to show that $\beta_{n, k}$ is not a Perron number, for all $n, k \in \mathbb{N}$ with $n \geq 2$.  To this end let $P_{n, k}$ denote the polynomial given by
\begin{align*}
P_{n, k}(x) \coloneqq x^{2^{k} n} - x^{2^{k} (n-1)} - \dots - x^{2^{k}} - 1.
\end{align*}
Suppose, by way of contradiction, that $\beta_{n, k}$ was a Perron number.  By the definition of $\beta_{n, k}$ and $P_{n, k}$, we have that $P_{n, k}(\beta_{n, k}) = 0$, and so the minimal polynomial $Q_{n, k}$ of $\beta_{n, k}$ divides $P_{n, k}$.  Further, if $\beta$ was a root of $Q_{n, k}$ not equal to $\beta_{n, k}$, then it would also be a root of $P_{n, k}$.   Moreover, $\lvert \beta \rvert < 1$.  Conversely, suppose that $|\beta| > 1$. Since $\beta$ is a root of $Q_{n,k}$, then $P_{n,k}(\beta) = 0$.  Thus $1 <|\beta|< \lvert \beta^{2^{k}} \rvert <(\beta_{n,k})^{2^k} = \gamma_{n}$, by our assumptions.  Hence, $\beta^{2^k}$ would be a Galois conjugate of $\gamma_{n}$ with modules greater than $1$, contradicting the fact that $\gamma_{n}$ is a Pisot number.  Thus, all of the roots of $Q_{n, k}$ with the exception of $\beta_{n, k}$ would be of absolute value strictly less than $1$, and hence $\beta_{n, k}$ would be a Pisot number.  However, it is well known that there are only two Pisot numbers $\theta_{0}$ and $\theta_{1}$ less than $2^{1/2}$, and moreover, $\theta_{1} > \theta_{0} > 2^{1/3}$.  Thus, since $\gamma_{n} \in (1, 2)$, if $k \geq 2 $ we obtain a contradiction to our assumption that $\beta_{n, k}$ is a Perron number.   By numerical calculations we know that $1.8 > {\theta_{0}}^2$, $1.925 > {\theta_{1}}^2$, $P_{4,0}(1.8) < 0$ and $P_{4,0}(1.925) < 0$.  Therefore, for all $n \geq 4$, we have that $\theta_{0}$ nor $\theta_{1}$ is a root of $P_{n,1}$.  Furthermore,  for $n \in \{ 2, 3 \}$, it is a simple calculation to show that the minimal polynomials of $\theta_{0}$ and $\theta_{1}$, namely $x \mapsto x^3 - x - 1$ and $x \mapsto x^4 - x^3 - 1$ respectively, do not divide $P_{n, 1}$, for $n \in \{ 2, 3\}$.  Hence, $\theta_{0}, \theta_{1} \not\in \{ \beta_{2,1}, \beta_{3,1} \}$.  This yields a contradiction to the assumption that $\beta_{n, 1}$ is a Perron number.
\end{proof}

\section*{Acknowledgements}

\noindent This work was initiated at the \textit{Bremen Winter School on Multifractals and Number Theory} held in 2013.  We are also grateful to K.\ Dajani, T.\ Jolivet, T.\ Jordan, C.\ Kalle and \mbox{W.\ Steiner} for their comments during the writing of this article.  The third author also extends his thanks to M.\ Barnsley for many stimulating conversations concerning the extended model of an intermediate $\beta$-transformation.  Further, we would like to thank the referee for his/her detailed comments and suggestions.

\end{document}